\newcommand{\R}{{\mathbb R}}
\newcommand{\E}{{\mathbb E}}\makeatletter
\newcommand{\N}{{\mathbb N}}
\DeclareMathOperator{\sgn}{sgn}
\providecommand{\algorithmname}{Algorithm}
\theoremstyle{definition}
\newtheorem*{condition*}{\protect\conditionname}
\theoremstyle{plain}
\newtheorem{thm}{\protect\theoremname}[section]
\theoremstyle{plain}
\newtheorem{prop}[thm]{\protect\propositionname}
\theoremstyle{definition}
\theoremstyle{plain}
\newtheorem{lem}[thm]{\protect\lemmaname}
\theoremstyle{plain}
\theoremstyle{plain}
\newtheorem{remark}[thm]{\protect\remarkname}
\theoremstyle{plain}
\theoremstyle{plain}
\newtheorem*{assumption*}{\protect\assumptionname}
\numberwithin{equation}{section}
\providecommand{\assumptionname}{Assumption}
\providecommand{\conditionname}{Condition}
\providecommand{\corollaryname}{Corollary}
\providecommand{\definitionname}{Definition}
\providecommand{\lemmaname}{Lemma}
\providecommand{\propositionname}{Proposition}
\providecommand{\theoremname}{Theorem}
\providecommand{\remarkname}{Remark}
\providecommand{\examplename}{Example}
\newcommand{\opnorm}{\@ifstar\@opnorms\@opnorm}
\newcommand{\@opnorms}[1]{%
  \left|\mkern-1.5mu\left|\mkern-1.5mu\left|
   #1
  \right|\mkern-1.5mu\right|\mkern-1.5mu\right|
}
\newcommand{\@opnorm}[2][]{%
  \mathopen{#1|\mkern-1.5mu#1|\mkern-1.5mu#1|}
  #2
  \mathclose{#1|\mkern-1.5mu#1|\mkern-1.5mu#1|}
}
\begin{document}

\title{Convergence in total variation of the Euler-Maruyama scheme applied to diffusion processes with  measurable drift coefficient and additive noise}

\author{O. Bencheikh and B. Jourdain\thanks{Cermics, \'Ecole des Ponts, INRIA, Marne-la-Vall\'ee, France. E-mails : benjamin.jourdain@enpc.fr, oumaima.bencheikh@enpc.fr. The authors would like to acknowledge financial support from Universit\'e Mohammed VI Polytechnique.}}

\maketitle

\begin{abstract}
  We are interested in the Euler-Maruyama discretization of a stochastic differential equation in dimension $d$ with constant diffusion coefficient and bounded measurable drift coefficient. In the scheme, a randomization of the time variable is used to get rid of any regularity assumption of the drift in this variable. We prove weak convergence with order $1/2$ in total variation distance. When the drift has a spatial divergence in the sense of distributions with $\rho$-th power integrable with respect to the Lebesgue measure in space uniformly in time for some $\rho \ge d$, the order of convergence at the terminal time improves to $1$ up to some logarithmic factor. In dimension $d=1$, this result is preserved when the spatial derivative of the drift is a measure in space with total mass bounded uniformly in time. We confirm our theoretical analysis by numerical experiments.

  \bigskip
  \noindent{\bf Keywords:}
  diffusion processes, Euler scheme, weak error analysis.
\end{abstract} 

\bigskip
\noindent {{\bf AMS Subject Classification (2010):} \it 60H35, 60H10, 65C30, 65C05}


\section{Introduction}

In numerous areas such as mathematical finance when, for example, modelling a stock price process whose trend dramatically changes when a factor goes down a threshold value, or in stochastic control theory when choosing a control process that minimizes the expected discounted cost, we end-up dealing with diffusions that do not show a smooth behavior which results in Stochastic Differential Equations with discontinuous coefficients.
In the present paper, we are interested in the Euler-Maruyama discretization of the stochastic differential equation
\begin{equation}
  X_t = X_0 + W_t + \int_0^t b(s,X_s)\,ds,\quad t\in[0,T]\label{edsnl}
\end{equation}
where $(W_t)_{t\ge 0}$ is a $d$-dimensional Brownian motion independent from the initial $\R^d$-valued random vector $X_0$, $T\in(0,+\infty)$ is a finite time horizon and the drift coefficient $b:[0,T]\times\R^d\to\R^d$ is merely measurable and bounded.\\

While the convergence properties of the Euler-Maruyama scheme are well understood for SDEs with smooth coefficients, the case of irregular coefficients is still an active field of research.
Concerning the strong error, the additive noise case is investigated in \cite{HalKloe} where Halidias and Kloeden only prove convergence and in \cite{DarGer,NeuSzo} where rates are derived. Dareiotis and Gerencs\'er \cite{DarGer} obtain convergence with $L^2$-order $1/2-$ (meaning $1/2-\epsilon$ for arbitrarily small $\epsilon >0$) in the time-step for bounded and Dini-continuous time-homogeneous drift coefficients and check that this order is preserved in dimension $d=1$ when the Dini-continuity assumption is relaxed to mere measurability. In the scalar $d=1$ case, Neuenkirch and Sz\"olgyenyi \cite{NeuSzo} assume that the drift coefficient is the sum of a $ \mathcal{C}^{2}_b$ part and a bounded integrable irregular part with a finite Sobolev-Slobodeckij semi-norm of index $\kappa \in (0,1)$. They prove $L^2$-convergence with order $\frac{3}{4}\wedge\frac{1 + \kappa}{2} -$ for the equidistant Euler-Maruyama scheme, the cutoff of this order at $\frac{3}{4}$ disappearing for a suitable non-equidistant time-grid.
Note that an exact simulation algorithm has been proposed by \'Etor\'e and Martinez \cite{EtoMArt} for one-dimensional SDEs with additive noise and time-homogeneous and smooth except at one discontinuity point drift coefficient.\\More papers have been devoted to the strong error of the Euler scheme for SDEs with a non constant diffusion coefficient. The first result goes back to Gy\"ongy and Krylov \cite{GyoKryl} who established convergence in probability (without any rate) when the coefficients are continuous in space and pathwise uniqueness holds for the stochastic differential equation.
Gy\"ongy \cite{Gyo} proves almost sure convergence with order $1/4-$ when the diffusion coefficient is locally Lipschitz in space and the drift coefficient locally one-sided Lipschitz in space uniformly in time and some Lyapunov condition holds. Yan \cite{Yan} investigates conditions under which the Euler scheme converges to the unique weak solution of the SDE. In dimension $d=1$, this author also shows the $L^1$-order $\beta_1\wedge\frac{\alpha}{2}\wedge\frac{\alpha}{1+\alpha}\beta_2$ when the drift coefficient is Lipschitz in the spatial variables and $\beta_1$-H\"older in time while the diffusion coefficient is $(\frac{1}{2}+\alpha)$-H\"older in space and $\frac{\beta_2}{2}$-H\"older in time. Still in dimension one, Gyongy and Rasonyi \cite{GR} obtain the $L^1$-order $\alpha\wedge\frac{\gamma}{2}$ when the diffusion coefficient is $(\frac{1}{2}+\alpha)$-H\"older continuous in space and the drift coefficient is the sum of a function Lipschitz continuous in space and a function non-increasing and $\gamma$-H\"older continuous in space. Like in \cite{GyoKryl,Gyo}, the discretization only concerns the spatial variable of the coefficients while the time variable still moves continuously in the scheme analysed. In \cite{NgTag1}, Ngo and Taguchi prove $L^1$-order $1/2$ when (resp. $\alpha\in (0,1/2]$, when $d=1$,) the diffusion coefficient is uniformly elliptic, bounded, Lipschitz (resp. $(\frac{1}{2}+\alpha)$-H\"older) in space and the drift coefficient one-sided Lipschitz, bounded and with bounded variation in space with respect to some Gaussian measure. The coefficients are assumed to be $1/2$-H\"older with respect to the time variable. In a second paper \cite{NgTag2} specialized to dimension $d=1$ with time-homogeneous coefficients, they show $L^1$-order $\frac{\beta}{2}\wedge \alpha$ under the same assumption on the diffusion coefficient and when the drift is the sum of a bounded $\beta$-H\"older function and a bounded function with bounded variation with respect to some Gaussian measure. When the diffusion coefficient is uniformly elliptic, Lipschitz continuous in space, Dini-continuous in time and the drift coefficient is Dini-continuous in both variables, Bao, Huang and Yuan \cite{BHY} prove $L^2$-convergence with an order expressed in terms of the Dini modulus of continuity.
\\Recent attention has been paid to the Euler-Maruyama discretization of SDEs with a piecewise Lipschitz drift coefficient and a globally Lipschitz diffusion coefficient which satisfies some non-degeneracy condition on the discontinuity hypersurface of the drift coefficient. Leobacher and Sz\"olgyenyi \cite {LeoSzo3} prove convergence with $L^2$-order $1/4-$ of the Euler-Maruyama method. This result is proved by comparison with a scheme with $L^2$-order $1/2$ \cite {LeoSzo1,LeoSzo2} obtained by the Euler discretization of a transformation of the original SDE which permits to remove the discontinuity of the drift. In dimension $d=1$,  M\"uller-Gronbach and Yaroslavtseva \cite{MGYaro1} recover for each $p\in[1,\infty)$ the  $L^p$-order of convergence $1/2$ valid when the drift coefficient is globally Lipschitz. In higher dimension, this order $1/2$ (up to some logarithmic factor) is proved by Neuenkirch, Sz\"olgyenyi and Szpruch \cite{NeunSzoSpr} for the $L^2$-error of an Euler-Maruyama scheme with adaptive time-stepping.\\

Concerning the weak error, Mikulevicius and Platen \cite{MikP} prove that, under uniform ellipticity, when the coefficients are $\alpha$-H\"older with respect to the spatial variables and  $\frac{\alpha}{2}$-H\"older with respect to the time variable for $\alpha\in(0,1)$, then $\E[f(X_T)]$ is approximated with order $\frac{\alpha}{2}$ by replacing $X_T$ by the Euler-Maruyama scheme at time $T$ when the test function $f:\R^d\to \R^d$ is twice continuously differentiable with $\alpha$-H\"older second order derivatives. In the additive noise case, Kohatsu-Higa, Lejay and Yasuda \cite{KohY}, prove that for $f$ thrice continuously differentiable with polynomially growing derivatives, the convergence holds with order $1/2-$ when $d\ge 2$ (resp. $1/3-$ when $d=1$) and the drift coefficient is time homogeneous, bounded and Lipschitz except on a set $G$ such that $\varepsilon^{-d}$ times the Lebesgue measure of $\{x\in\R^d:\inf_{y\in G}|x-y|\le\varepsilon\}$ is bounded. Their approach, which consists in regularizing the drift coefficient and considering both the stochastic differential equation and the Euler scheme for the regularized coefficient, is also applied in \cite{Koh} to the case of time-dependent, bounded, uniformly elliptic and continuous diffusion coefficients. In \cite{MenoKona}, Konakov and Menozzi regularize both coefficients to obtain that the absolute difference between the densities with respect to the Lebesgue measure of the solution and its Euler-Maruyama discretization is bounded from above by a Gaussian density multiplied by a factor with order $\frac{\alpha}{2}-$, when these coefficients are uniformly elliptic, bounded and $\frac{\alpha}{2}$-H\"older continuous in time and $\alpha$-H\"older continuous in space. The order of the factor is $\frac{1}{2d}-$ when the coefficients are bounded, uniformly elliptic, continuously differentiable with bounded derivatives up to the order $2$ in time and the order $4$ in space at the possible exception, for the drift coefficient, of a finite union of time-independent smooth submanifolds where it can be discontinuous. In \cite{NFrik}, Frikha deals with time-homogeneous one-dimensional stochastic differential equations possibly involving a local time term in addition to a bounded measurable drift coefficient and a bounded uniformly elliptic and $\alpha$-H\"older diffusion coefficient. He proves that the absolute difference between the densities is smaller than a Gaussian density multiplied by a factor with order $\frac{\alpha}{2}$ in the time-step. The latest work in this field is by Suo, Yuan and Zhang \cite{SuYuZ} who study in a multidimensional setting stochastic differential equations with additive noise and time-homogeneous drift coefficients with at most linear growth and satisfying an integrated against some Gaussian measure $\alpha$-H\"older type regularity condition. When this coefficient has sublinear growth (and under some restriction on the time-horizon when it has linear growth), they prove convergence in total variation with order $\frac{\alpha}{2}$.\\

In the current paper, we consider the stochastic differential equation \eqref{edsnl} with additive noise and bounded and measurable drift function $b:[0,T]\times\R^d\to\R^d$. We are interested in estimating the spatial integral of the absolute difference between the densities with respect to the Lebesgue measure of the solution and its Euler-Maruyama discretization with time-step $h\in(0,T]$. This integral is equal to the total variation distance between the probability measures that admit these densities with respect to the Lebesgue measure. Note that the approximation of a Markovian semi-group in total variation distance has been investigated by Bally and Rey \cite{Bally} who apply their results to the Ninomiya discretization scheme.

To get rid with any assumption stronger than mere measurability concerning the regularity of the drift coefficient with respect to the time variable, we consider the Euler-Maruyama discretization with randomized time variable of \eqref{edsnl}. It evolves inductively on the regular time-grid $(kh)_{k \in \left\llbracket0,\left\lfloor \frac{T}{h} \right\rfloor \right\rrbracket}$ by:
  \begin{equation}
    X^{h}_{(k+1)h}=X^h_{kh}+\left(W_{(k+1)h}-W_{kh}\right)+b\left(\delta_{k}, X^h_{kh}\right)h, \label{euler}
  \end{equation}
where the random variables $(\delta_k)_{k \in \left\llbracket0,\left\lfloor \frac{T}{h} \right\rfloor \right\rrbracket}$ are independent, respectively distributed according to the uniform law on $[kh,(k+1)h]$ and are independent from $\left(X_0,(W_t)_{t\ge 0}\right)$. Notice that this sequence is of course not needed to randomize the time variable when $b$ is time-homogeneous. To our knowledge, such randomization techniques have been proposed so far to improve the strong convergence properties of discretizations of ordinary differential equations \cite{Stengle1,Stengle2,HeinMil,JenNeue,Daun} or stochastic differential equations \cite{CruW}. They also happen to be quite efficient in terms of weak error. Indeed, the above randomization turns out to enable convergence in total variation distance with order $1$ up to some logarithmic factor. For $s\in[0,T)$, we denote by $\ell_s = \lfloor s/h\rfloor$ the index of the corresponding time interval $s \in [kh,(k+1)h)$. s.t. $k \le \lfloor T/h\rfloor -1$. We consider then the following continuous time interpolation of the scheme:
  \begin{equation}
    X^h_t = X_0 + W_t + \int_0^t b\left(\delta_{\ell_s},X^h_{\tau^h_s}\right)\,ds,\quad t\in[0,T] \mbox{ where } \tau^h_s = \lfloor s/h\rfloor h \label{conteuler}.
  \end{equation}
 For $t\ge 0$, we denote by $\mu_t$ the law of $X_t$ and by $\mu^h_t$ the law of $X^h_t$. We have $\mu_0=\mu^h_0 = m$. For $t>0$, according to Proposition \ref{propspde} below, $\mu_t$ and $\mu^h_t$ admit densities $p(t,.)$ and $p^h(t,.)$ with respect to the Lebesgue measure. 
Therefore, our approach amounts to study the rate of convergence of the $L^1$-norm of the difference between $p(t,.)$ and $p^h(t,.)$. We assume, in what is next, that $X_0$ is distributed according to a probability measure $m$ on $\R^d$ and the drift $b=(b_{i})_{1 \le i \le d}:[0,T]\times\R^d \to \R^d$ is a measurable function bounded by $B<+\infty$ when $\R^d$ is endowed with the $L^\infty$-norm.\\

The paper is organized as follows. In Section \ref{MainRes}, we state our main results. We first obtain the convergence of the weak error in total variation in $\mathcal{O}(\sqrt h)$ when $b$ is measurable and bounded. When assuming more regularity on $b$ with respect to the space variables, namely that the divergence in the sense of distributions of $b$ with respect to these variables is in $L^{\rho}\left(\R^d\right)$ for some $\rho \ge d$ uniformly with respect to the time variable, the weak rate of convergence $\left\|\mu_{kh} - \mu^h_{kh}\right\|_{\rm TV}$ is $\frac{1}{\sqrt{kh}}\left(1 + \ln\left(k\right)\right)h$ and it improves, at the terminal time, to $1$ up to a logarithmic factor. Furthermore, when assuming more regularity on the probability measure $m$ in addition to the spatial regularity on $b$, we improve the previous weak rate of convergence by eliminating the prefactor $\frac{1}{\sqrt{kh}}$. We obtain these results by comparing the mild equation satisfied by $p(t,.)$ and the perturbed mild equation satisfied by $p^h(t,.)$. We investigate through the Lamperti transform the application of those theorems to one-dimensional SDEs with a non-constant diffusion coefficient. Sections \ref{preuve} and \ref{preuveB} are dedicated to the proofs of the main results in Section \ref{MainRes}. We finally provide numerical experiments in Section \ref{NumExp} to illustrate our results. \\

Beforehand, note that our results also apply to the more general case of SDEs with constant and non degenerate diffusion coefficient $\sigma \in \R^d \times \R^d$:
\begin{align*}
  \displaystyle Y_t = Y_0 + \sigma W_t + \int_0^t \tilde b \left(s,Y_s\right)\,ds, \quad t\in[0,T]
\end{align*}
where the $\R^d$-valued random variable $Y_0$ is independent from $\left(W_t\right)_{t \ge 0}$ and $\tilde b:[0,T]\times \R^d \to \R^d$ is measurable and bounded.  Indeed, our results remain true for this type of diffusions since the transformation $\left(X_t = \sigma^{-1}Y_t\right)_{t \in [0,T]}$ is solution to the dynamics \eqref{edsnl} initialized by $X_0 = \sigma^{-1}Y_0$ for the choice of $b(t,x) = \sigma^{-1} \tilde b(t,\sigma x)$. The associated Euler scheme evolving inductively on the time grid $\left(kh\right)_{k \in \left \llbracket 0, \left\lfloor \frac{T}{h} \right\rfloor \right \rrbracket}$ is defined by:
\begin{align*}
  \displaystyle X^h_{(k+1)h} = X^h_{kh} + \left(W_{(k+1)h}-W_{kh}\right) + \sigma^{-1} \tilde b\left(\delta_k, \sigma X^h_{kh} \right)h,
\end{align*}
and we can clearly see that $\left(\sigma X^h_{kh}\right)_{k \in \left \llbracket 0, \left\lfloor \frac{T}{h} \right\rfloor \right \rrbracket}$ coincides exactly with the Euler scheme $\left(Y^h_{kh}\right)_{k \in \left \llbracket 0, \left\lfloor \frac{T}{h} \right\rfloor \right \rrbracket}$ of the process $\left(Y_t\right)_{t \in [0,T]}$. Denoting by $\tilde \mu_t$ the law of $Y_t$ and by $\tilde \mu^h_t$ the law of $Y^h_t$ the continuous time interpolation of the Euler scheme, we have that $\left\|\tilde \mu_t - \tilde \mu^h_t \right\|_{\rm TV} = \left\| \mu_t - \mu^h_t \right\|_{\rm TV}$. Moreover, Lemma \ref{lema1} in the appendix, which relates the spatial divergences in the sense of distributions of $y\mapsto \tilde b(t,y)$ and $x\mapsto \sigma^{-1}\tilde b(t,\sigma x)$, ensures that when the drift coefficient $\tilde b(t,y)$ satisfies the strengthened hypotheses in Theorem \ref{cvgenceB} below, then so does $\sigma^{-1}\tilde b(t,\sigma x)$.\\ 

Before going any further, we introduce some additional notation.
\subsection*{Notation:}
\begin{itemize}
    \item For $x \in \R^d$, we denote by $|x| = \left(\sum \limits_{i=1}^d x_{i}^2 \right)^{1/2}$ the euclidean norm of $x$. 
    \item For $ 1 \le p < \infty$, we denote by $L^p\left(\R^d\right)$ the space of measurable functions on $\R^d$ which are $L^p$-integrable for the Lebesgue measure i.e. $f\in L^p$ if $\displaystyle \left\Vert f\right\Vert_{L^p} = \left( \int_{\R^d} |f(x)|^p \; dx \right)^{\frac{1}{p}}<+\infty$.
    \item The space $L^{\infty}\left(\R^d\right)$ refers to the space of almost everywhere bounded measurable functions on $\R^d$ endowed with the norm $\left\Vert f\right\Vert_{L^{\infty}} = \inf \left\{ C \ge 0: |f(x)|\le C \; dx \text{ a.e. on } \R^d \right\} $. 
    \item For notational simplicity, when a function $g$ is defined on $[0,T] \times \R^d$ and $x \in \R^d$, we may use sometimes the notation $g_0(x) := g(0,x)$.
    \item We denote by $W^{1,1}\left(\R^d\right)$ the Sobolev space over $\R^d$ defined as $W^{1,1}\left(\R^d\right) \equiv \left\{u \in L^1\left(\R^d\right): \nabla u \in L^1\left(\R^d\right)^d \right\}$ where $\nabla u$ refers to the spatial derivative of $u$ in the sense of distributions. The space is endowed with the norm $\left\| u\right\|_{W^{1,1}} = \left\| u\right\|_{L^1} + \sum\limits_{i=1}^{d}\left\|\partial_{x_i}u \right\|_{L^1}$. 
    \item For any open subset $A \subset \R^d$, we denote by $\mathcal{C}^k_c(A)$ the space of real functions continuously differentiable in $A$ up to the order $k \in \N$, with compact support on $A$. 
    \item We denote by $BV(\R)$ the space of functions with bounded variation on $\R$. For a function $f \in L^1_{\rm loc}\left(\R\right)$, if $f \in BV(\R)$ then the derivative of $f$ in the sense of distributions is a finite measure in $\R$. 
    \item Let $\left(\mu_1, \dots, \mu_{d^*}\right)$ be signed bounded measures on $\R^d$ and $f:\R^d \to \R^{d^*}$ a $\mathcal{C}^0$-integrable function, with $d^* \in \{ 1,d\}$. We define the convolution product of $f$ and $\mu = \left(\mu_1, \dots, \mu_{d^*}\right)$ by: $$\displaystyle \bigg(f * \mu\bigg)(x) = \sum_{i=1}^{d^*}\int_{\R^d} f_{i}(x-y)d\mu_{i}(y) \; \text{ for } x \in \R^d.$$ When each $\mu_{i}$ admits a density $g_{i}$ with respect to the Lebesgue measure, we also denote by $\Big(f*g\Big)$ this convolution product.
\end{itemize}


\section{Main results}\label{MainRes}

In this section, we give the main results concerning the convergence of $\mu^h_t$, the law of the Euler discretization with time-step $h$ towards its limit $\mu_t$. We will make an intensive use of the interpretation of the total variation norm of their difference as the $L^1$-norm of the difference between their respective densities $p^h(t,.)$ and $p(t,.)$.\\

We recall that the total variation norm for a signed measure $\mu$ on $\R^d$ is defined as:
\begin{align}\label{totVar}
  \displaystyle \left\|\mu \right\|_{\rm TV} = \sup_{\varphi \in \mathcal{L}} \int_{\R^d}\varphi(x) d\mu(x)
\end{align}
where $\mathcal{L}$ denotes the set of all measurable functions $\varphi:\R^d \to [-1,1]$.
Moreover, when $\mu$ admits a density $f_{\mu}$ with respect to a reference non-negative measure $\lambda$, we have the following equality:
\begin{align}\label{caract}
  \displaystyle \left\|\mu\right\|_{\rm TV} = \int_{\R^d} \left|f_{\mu}(x)\right|\lambda(dx) .
\end{align}
Let us now state our estimation of the weak convergence rate of the Euler scheme towards its limit.
\begin{thm}\label{cvgence}
  Assume $b:[0,T]\times\R^d \to \R^d$ is measurable and bounded by $B<+\infty$. Then:
  \begin{align*}
    \exists \, C<+\infty,\;\forall h \in (0,T],\; \forall k \in \left\llbracket 0, \left\lfloor \frac{T}{h} \right\rfloor \right\rrbracket, \quad \left\|\mu_{kh}-\mu^h_{kh} \right\|_{\rm TV}\le C \sqrt h.
  \end{align*}
\end{thm}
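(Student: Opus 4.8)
The plan is to work at the level of densities. By \eqref{caract}, $\|\mu_{kh}-\mu^h_{kh}\|_{\rm TV}=\|p(kh,\cdot)-p^h(kh,\cdot)\|_{L^1}$, so it suffices to bound this $L^1$-norm by $C\sqrt h$ uniformly in $h$ and $k$. Write $g_u$ for the centred Gaussian density on $\R^d$ with covariance $uI_d$, i.e.\ the kernel of $e^{u\Delta/2}$. Proposition \ref{propspde} guarantees that $p(t,\cdot)$ is a genuine density solving the classical mild equation
\begin{align}\label{mildp}
  p(t,x)=(g_t*m)(x)-\sum_{i=1}^d\int_0^t\!\int_{\R^d}b_i(s,y)\,\partial_{x_i}g_{t-s}(x-y)\,p(s,y)\,dy\,ds ,
\end{align}
while, conditioning in \eqref{conteuler} on the information available at time $\tau^h_s$ together with $\delta_{\ell_s}$ --- so that, conditionally, $X^h_s$ is Gaussian centred at $X^h_{\tau^h_s}+b(\delta_{\ell_s},X^h_{\tau^h_s})(s-\tau^h_s)$ with covariance $(s-\tau^h_s)I_d$ --- and using $g_a*g_b=g_{a+b}$ together with the uniform law and independence of $\delta_{\ell_s}$, one obtains for $p^h$ the perturbed mild equation
\begin{align}\label{mildph}
  p^h(t,x)=(g_t*m)(x)-\sum_{i=1}^d\int_0^t\!\int_{\R^d}\Big(\tfrac1h\int_{\tau^h_s}^{\tau^h_s+h}b_i(r,y)\,\partial_{x_i}g_{t-\tau^h_s}\big(x-y-b(r,y)(s-\tau^h_s)\big)\,dr\Big)p^h(\tau^h_s,y)\,dy\,ds .
\end{align}

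Subtracting \eqref{mildp} from \eqref{mildph}, I would estimate $e^h(t,\cdot):=p^h(t,\cdot)-p(t,\cdot)$ in $L^1$ at a grid time $t=Kh$, which has the convenient feature that $t-\tau^h_s\ge h$ for every $s<t$, so the Gaussian kernels never become singular at the upper endpoint. The elementary ingredients are the heat-kernel bounds $\|\partial^\alpha g_u\|_{L^1}\le C_\alpha u^{-|\alpha|/2}$ and $\|\partial^\alpha g_u(\cdot-v)-\partial^\alpha g_u\|_{L^1}\le C_\alpha|v|u^{-(|\alpha|+1)/2}$ for $v\in\R^d$, together with $\int_{\R^d}p(s,y)\,dy=\int_{\R^d}p^h(s,y)\,dy=1$. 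Matching the two kernels, the difference of the drift terms telescopes exactly into: (i) a spatial-shift error, replacing $g_{t-\tau^h_s}(x-y-b(r,y)(s-\tau^h_s))$ by $g_{t-\tau^h_s}(x-y)$ (a displacement of norm $\le Bh$); (ii) a kernel-time error, replacing $g_{t-\tau^h_s}$ by $g_{t-s}$ (a time shift $\le h$); (iii) the drift time-randomisation error, comparing $\tfrac1h\int_{\tau^h_s}^{\tau^h_s+h}b_i(r,y)\,dr$ with $b_i(s,y)$; and (iv) a density error $p^h(\tau^h_s,y)-p(s,y)=e^h(\tau^h_s,y)+\big(p(\tau^h_s,y)-p(s,y)\big)$, whose second piece is handled through the $L^1$-continuity in time of $p$ read off from \eqref{mildp}. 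After integration in $x$, terms (i), (ii) and the second piece of (iv) --- together with the very first time-step $s\in[0,h)$, where $p(\tau^h_s,\cdot)=m$ need not be an honest density and one simply bounds the two kernels separately --- contribute quantities whose sum over the $k$-th time-steps is $O(\sqrt h)$ up to a logarithmic factor, by the kernel estimates above. The heart of the matter is (iii): although $\tfrac1h\int_{kh}^{(k+1)h}b_i(r,y)\,dr-b_i(s,y)$ is only of size $O(B)$ and not small, it has zero integral over the step $s\in[kh,(k+1)h)$; integrating by parts in $s$ against $s\mapsto\partial_{x_i}g_{t-s}(x-y)$ --- the boundary terms vanishing since the primitive in $s$ is bounded by $2Bh$ and vanishes at both endpoints of the step --- replaces $\partial_{x_i}g_{t-s}$ by $\partial_u\partial_{x_i}g_u$ at the price of an extra factor $h$, so that the $k$-th step contributes $\lesssim Bh^2((K-k)h)^{-3/2}$ and $\sum_k h^2((K-k)h)^{-3/2}=h^{1/2}\sum_{j\ge 1}j^{-3/2}=O(\sqrt h)$. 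For (ii) and (iii) the critical step $k=K-1$ (for which $t-(k+1)h$ vanishes) is excluded from this argument and treated crudely, but then $\int_{(K-1)h}^{Kh}(Kh-s)^{-1/2}\,ds=O(\sqrt h)$.

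Collecting all the pieces yields, at grid times, a discrete Gronwall inequality of the form
\begin{align}\label{gronw}
  \|e^h(Kh,\cdot)\|_{L^1}\le C\sqrt h+C\sum_{j=0}^{K-1}\frac{h}{\sqrt{(K-j)h}}\,\|e^h(jh,\cdot)\|_{L^1},
\end{align}
with a singular kernel of order $1/2$. Substituting \eqref{gronw} into itself once makes the iterated kernel bounded (because $\sum_{l}(K-l)^{-1/2}(l-j)^{-1/2}$ is bounded uniformly in $K$ and $j$), and the ordinary discrete Gronwall lemma then gives $\|e^h(Kh,\cdot)\|_{L^1}\le C'\sqrt h$ uniformly over $h\in(0,T]$ and $K\in\{0,\dots,\lfloor T/h\rfloor\}$, which is the claim. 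I expect the main obstacle to be term (iii): the time-randomisation alters the drift by an $O(1)$ amount, and only by exploiting the exact cancellation of its time-average over each step can one extract the missing power of $h$ (indeed an $h$, up to logarithms) --- all while keeping the singular Gaussian kernels integrable near $s=0$ and near $s=t$. The remaining terms and the fractional Gronwall step are routine once the mild representations \eqref{mildp} and \eqref{mildph} are in hand.
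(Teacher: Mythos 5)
Your proposal is correct and is essentially the paper's own argument: compare the mild equation for $p$ with the perturbed mild equation for $p^h$ obtained by conditioning at $\tau^h_s$ (Proposition \ref{propspde} together with the computation inside Proposition \ref{mildfn2}), estimate the resulting pieces with the $L^1$ heat-kernel bounds of Lemma \ref{EstimHeatEq} and an $L^1$-in-time continuity lemma for $p$ (Lemma \ref{evoldens}), treat the first and last steps crudely, and close with the singular discrete Gronwall inequality iterated once (Lemmas \ref{sumPi} and \ref{Gronw}). The one substantive divergence is your term (iii): the paper orders the telescoping so that on the scheme side both the kernel $\nabla G_{kh-\tau^h_s}$ and the law $\mu^h_{\tau^h_s}$ are frozen on each step; since $\delta_{\ell_s}$ is uniform on the step and independent of $X^h_{\tau^h_s}$, the randomized drift average then coincides \emph{exactly} with the time integral of $b(s,\cdot)$ over the step, so the term you call the heart of the matter never appears --- the only price of the randomization is your spatial-shift term (i), i.e.\ the Taylor remainder $R^h$ of Proposition \ref{mildfn2}, while the kernel time-shift is charged to the exact-flow side ($V_1^h$, applied to $b(s,\cdot)p(s,\cdot)$). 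Your alternative --- letting the drift-average mismatch face the moving kernel $\partial_{x_i}G_{t-s}$ and removing it by the zero-mean integration by parts in $s$, paying $\|\partial_s\partial_{x_i}G_{t-s}\|_{L^1}\lesssim (t-s)^{-3/2}$ and summing to $O(\sqrt h)$ with the last step handled crudely --- is valid, just a detour. One point to tighten in a full write-up: the logarithmic contributions (your (i), of size $O(h\ln(1/h))$, and the $\ln(s/\tau^h_s)$ term coming from the time-continuity of $p$ in (iv), which via $\ln(s/\tau^h_s)\le 2h/s$ gives $O\big(h\ln(4k)/\sqrt{kh}\big)$) must be checked to be dominated by $C\sqrt h$ uniformly in $k$ and $h$, as the paper does; a bound of the form $\sqrt h$ ``up to a logarithmic factor'' would not suffice for the stated conclusion.
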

\begin{remark}
\begin{itemize}
  \item In dimension $d=1$, when specialized to the constant diffusion coefficient and absence of local time term case, Theorem 2.2 in \cite{NFrik} gives a finer estimation of the absolute difference between the densities by $C\sqrt{h}$ times some Gaussian density. Our result is recovered by spatial integration.
  It implies that for any bounded and measurable test function $f:\R^d\to\R$, $\E\left[f\left(X^h_{kh}\right)\right]-\E\left[f\left(X_{kh}\right)\right]={\mathcal O}(\sqrt{h})$.
  \item Up to some factor $h^{-\varepsilon}$ with $\varepsilon$ arbitrarily small, this behaviour was proved in dimension $d\ge 2$ for thrice continuously differentiable test functions (with polynomial growth together with their derivatives) $f$ by Kohatsu-Higa, Lejay and Yasuda \cite{KohY}, when the drift coefficient is time-homogeneous and Lipschitz outside some sufficiently small set.
  \item Suo, Yuan and Zhang \cite{SuYuZ} proved convergence with order $\frac{\alpha}{2}$ in total variation when the drift coefficient is time-homogeneous and satisfies some integrated against a Gaussian measure $\alpha$-H\"older type of regularity condition. Since $\alpha$ appears to take values smaller than one, we obtain the better order of convergence $1/2$ without any regularity assumption. On the other hand, we need boundedness of the drift coefficient whereas Suo, Yan and Zhang get rid of this assumption and only assume sublinear growth and even linear growth but with some restriction on the time-horizon $T$. 
\end{itemize}
\end{remark}

Now, when assuming more regularity on $b$ with respect to the space variables, we obtain a better rate of convergence:
\begin{thm}\label{cvgenceB}
  Assume $b:[0,T]\times\R^d \to \R^d$ is measurable and bounded by $B<+\infty$. If $\,\sup_{t \in [0,T]} \|\nabla \cdot b(t,.)\|_{L^\rho}<+\infty$ for some $\rho \in [d,+\infty]$ or for $d=1$, $\sup_{t \in [0,T]}\left\|\partial_x b(t,.)\right\|_{\rm TV}<+\infty$; where $\nabla \cdot b(t,.)$ and $\partial_x b(t,.)$ are respectively the spatial divergence and the spatial derivative of $b$ in the sense of distributions. Then:
  \begin{align*}
    \displaystyle \exists \, \tilde{C}<+\infty, \;\forall h \in (0,T],\; \forall k \in \left\llbracket 0, \left\lfloor \frac{T}{h} \right\rfloor \right\rrbracket, \quad \left\|\mu_{kh}-\mu^h_{kh} \right\|_{\rm TV} \le \frac{\tilde{C}}{\sqrt{kh}}\Big(1 + \ln\left(k\right)\Big)h.
  \end{align*}
\end{thm}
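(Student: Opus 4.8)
Write $g_t(x)=(2\pi t)^{-d/2}\exp(-|x|^2/2t)$ for the heat kernel, so that $g_a*g_b=g_{a+b}$, $\|\nabla^{j}g_t\|_{L^1}\le C_jt^{-j/2}$ and $\|g_t\|_{L^q}=C_qt^{-\frac d2(1-1/q)}$, and for a vector field $v=(v_i)_{1\le i\le d}$ set $\nabla g_t*v:=\sum_{i=1}^d\partial_ig_t*v_i$; let $\rho'$ be the conjugate exponent of $\rho$ (with $\rho'=1$ if $\rho=\infty$). The plan is to start from the pair of mild identities provided, together with Proposition~\ref{propspde}, by a conditioning on $(X^h_{\tau^h_s},\delta_{\ell_s})$ in the Dynkin formula for \eqref{conteuler}:
\begin{equation*}
  p(t,\cdot)=g_t*m-\int_0^t\nabla g_{t-s}*\bigl(b(s,\cdot)p(s,\cdot)\bigr)\,ds,\qquad
  p^h(t,\cdot)=g_t*m-\int_0^t\nabla g_{t-s}*F^h(s,\cdot)\,ds,
\end{equation*}
with, writing $\tau^h_s=\lfloor s/h\rfloor h$,
\begin{equation*}
  F^h(s,x)=\int_{\R^d}\frac1h\int_{\tau^h_s}^{\tau^h_s+h}b(u,z)\,p^h(\tau^h_s,z)\,g_{s-\tau^h_s}\!\bigl(x-z-(s-\tau^h_s)b(u,z)\bigr)\,du\,dz .
\end{equation*}
Then $\varepsilon_t:=p^h(t,\cdot)-p(t,\cdot)$ solves the perturbed linear equation $\varepsilon_t=-\int_0^t\nabla g_{t-s}*\bigl(F^h(s,\cdot)-b(s,\cdot)p(s,\cdot)\bigr)\,ds$, which I would analyse at the terminal times $t=kh$, on top of the bound $\|\varepsilon_t\|_{L^1}\le C\sqrt h$ already furnished by Theorem~\ref{cvgence}.

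First I would perform a first-order Taylor expansion of $g_{s-\tau^h_s}(\cdot-z-w)$ in the $O(h)$ shift $w=(s-\tau^h_s)b(u,z)$, giving $F^h(s,\cdot)=g_{s-\tau^h_s}*\bigl(\bar b^h(s,\cdot)\,p^h(\tau^h_s,\cdot)\bigr)+R^h(s,\cdot)$, where $\bar b^h(s,z)=\frac1h\int_{\tau^h_s}^{\tau^h_s+h}b(u,z)\,du$ is the block-averaged drift and $R^h$ a first-order remainder carrying one extra gradient of $g_{s-\tau^h_s}$. Using the semigroup identity $\nabla g_{t-s}*g_{s-\tau^h_s}=\nabla g_{t-\tau^h_s}$, the fact that $\tau^h_s$, $\bar b^h(s,\cdot)$ and $p^h(\tau^h_s,\cdot)$ are constant on each block $[kh,(k+1)h)$, and the identity $\int_{kh}^{(k+1)h}b(s,\cdot)\,ds=h\,\bar b^h(kh,\cdot)$ — which holds \emph{exactly} because $\delta_k$ is uniform on the block (the one place the time-randomization is used) — one can reorganize, for $t=Kh$,
\begin{equation*}
  \varepsilon_{Kh}=-\sum_{k=0}^{K-1}\int_{kh}^{(k+1)h}\nabla g_{(K-k)h}*\bigl(b(s,\cdot)\,\varepsilon_{kh}\bigr)\,ds-\mathcal C_{Kh},
\end{equation*}
where the first sum has $L^1$-norm at most $CB\sum_k h\,((K-k)h)^{-1/2}\|\varepsilon_{kh}\|_{L^1}$ — a weakly singular Gronwall kernel — and $\mathcal C_{Kh}$ collects $R^h$, the mismatch between the frozen kernel $\nabla g_{(K-k)h}$ and $\nabla g_{Kh-s}$, the freezing of the spatial argument of $b$ at $\tau^h_s$, and the replacement of $p(s,\cdot)$ by $p^h(\tau^h_s,\cdot)$ within a block.

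The crux is to prove $\|\mathcal C_{Kh}\|_{L^1}\le\tilde C\,h(1+\ln K)/\sqrt{Kh}$, i.e.\ one power of $h$ better than the naive bound. Here the divergence hypothesis enters through the integration by parts $\nabla g_a*(b(s,\cdot)f)=g_a*\bigl((\nabla\!\cdot\! b(s,\cdot))f\bigr)+g_a*\bigl(b(s,\cdot)\cdot\nabla f\bigr)$, which trades a singular factor $a^{-1/2}$ coming from a gradient of the heat kernel for the $L^\rho$-norm of $\nabla\!\cdot\! b(s,\cdot)$ paired (by H\"older) with the $L^{\rho'}$-norm of a density increment, plus a gradient of the density. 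The densities are controlled by the a priori bounds
\begin{equation*}
  \sup_{t\in(0,T]}\Bigl(t^{1/2}\|\nabla p(t,\cdot)\|_{L^1}+t^{d/(2\rho)}\|p(t,\cdot)\|_{L^{\rho'}}\Bigr)<+\infty\qquad\text{(and the same with }p^h\text{)},
\end{equation*}
which I would deduce from the mild identities by the same integration by parts and a weakly singular Gronwall argument. With these, the block contributions to $\mathcal C_{Kh}$ reduce to the sums $\sum_{j=1}^Kj^{-1}\le1+\ln K$ and $\sum_{k=1}^{K-1}\bigl((K-k)\,k^{d/(2\rho)}\bigr)^{-1}\le C(1+\ln K)\,K^{-d/(2\rho)}$, and since $d/(2\rho)\le1/2$ and $Kh=t\le T$ these combine into the claimed bound on $\mathcal C_{Kh}$; the weakly singular Gronwall lemma applied to $t\mapsto\|\varepsilon_t\|_{L^1}$ then yields the theorem. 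The restriction $\rho\ge d$ is precisely what makes the space–time convolution integrals above convergent — only logarithmically so at the endpoint $\rho=d$, which is the origin of the factor $1+\ln k$ — while the prefactor $1/\sqrt{kh}$ reflects the $t\downarrow0$ blow-up of $\|\nabla p(t,\cdot)\|_{L^1}$ and $\|p(t,\cdot)\|_{L^{\rho'}}$ for a general initial law $m$. In dimension $d=1$ with $\partial_xb(t,\cdot)$ a finite measure one argues identically, replacing $\|\nabla\!\cdot\! b(t,\cdot)\|_{L^\rho}$ by the total mass of $\partial_xb(t,\cdot)$ paired with $\|p(t,\cdot)\|_{L^\infty}\le Ct^{-1/2}$; Lemma~\ref{lema1} makes the hypothesis stable under the linear change of variables of the introduction.

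The hard part will be the control of $\mathcal C_{Kh}$: crude term-by-term estimates only reproduce the order $\sqrt h$ of Theorem~\ref{cvgence}, so the extra power of $h$ must be extracted from the structure of the randomized scheme itself — the exact block-averaging of $b$ and the resulting cancellations — in conjunction with the a priori gradient and $L^{\rho'}$ estimates for $p$ and $p^h$, which in turn have to be established from nothing more than the \emph{integrability} of $\nabla\!\cdot\! b$. The remaining ingredients (the semigroup/block algebra and the two weakly singular Gronwall inequalities) are routine.
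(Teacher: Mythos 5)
Your strategy is essentially the paper's proof: compare the mild equation for $p$ with the perturbed mild equation obtained by conditioning on $(X^h_{\tau^h_s},\delta_{\ell_s})$ and Taylor-expanding the kernel (the paper's Proposition \ref{mildfn2}, where the exact block-averaging produced by the uniform $\delta_k$ is used in just the way you describe and yields the $O(h\ln k)$ remainder), split the difference into a weakly singular Gronwall term plus discretization errors (the paper's $V_1,V_2,V_3,R^h$), use the divergence hypothesis to move the gradient from the heat kernel onto $b\,p$ (the paper's Lemma \ref{Sobol}), control $\sqrt t\,\|\nabla p(t,\cdot)\|_{L^1}$ by a fixed-point argument on the mild equation (Theorem \ref{sobolP}, giving $\|\nabla\cdot(b(t,\cdot)p(t,\cdot))\|_{\rm TV}\le M/\sqrt t$ in Proposition \ref{hyp} and the refined time-increment bound of Lemma \ref{evoldensB}), and conclude with the discrete Gronwall Lemma \ref{Gronw}.

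Two caveats. First, your a priori bounds are only needed for the \emph{limit} density $p$: in your own decomposition the scheme's density enters $\mathcal C_{Kh}$ only through $R^h$ (controlled by $B$ and second derivatives of the heat kernel alone) and through $\varepsilon_{kh}$ in the Gronwall term, so the claim ``and the same with $p^h$'' is superfluous --- and it would actually be delicate to prove uniformly in $h$ under a divergence-only hypothesis, because after conditioning the drift sits \emph{inside} the kernel argument $x-y-(s-\tau^h_s)b(\delta,y)$, and integrating by parts in $y$ there generates the full Jacobian of $b$, which is not controlled; the paper's arrangement deliberately avoids any regularity of $p^h$. Second, the logarithmic factor does not come from the endpoint $\rho=d$ (at $\rho=d$ the integral $\int_0^t (t-s)^{-1/2}s^{-1/2}\,ds$ is a Beta integral, not log-divergent, and the log is present even for $\rho=\infty$): it originates in the time-discretization sums, namely $h\sum_{j<k}(kh-jh)^{-1}$ in $R^h$ and the analogous integrals in the kernel-freezing and time-increment terms; the hypothesis $\rho\ge d$ is what makes the Sobolev pairing $\|p(t,\cdot)\|_{L^{\rho/(\rho-1)}}\lesssim\|p(t,\cdot)\|_{W^{1,1}}$ (or your $t^{-d/(2\rho)}$ heat-kernel bound) available. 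With these adjustments your outline fills in exactly as in Sections \ref{preuve}--\ref{preuveB}.
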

As a consequence, when $n$ is a positive integer, we have that $ \left\|\mu_{T}-\mu^{T/n}_{T} \right\|_{\rm TV} \le \tilde C\sqrt{T}\left(\frac{1+\ln(n)}{n}\right)$.
Therefore, the order of convergence at the terminal time improves to $1$ up to some logarithmic factor. This in particular applies to the bounded one-dimensional time-homogeneous drift coefficient with bounded variation defined by Suo, Yuan and Zhang in Example 2.3 \cite{SuYuZ} using some Cantor set, for which they obtain convergence with order $1/4$ uniformly in time.

\begin{remark} \begin{itemize}
\item Of course, the regularity assumption on the drift coefficient $b$ is satisfied when it is Lipschitz in space (which is equivalent to the boundedness of its spatial gradient in the sense of distributions). When $d\ge 2$, the regularity assumption only involves the spatial divergence and not the full spatial gradient $\nabla b(t,.)$ in the sense of distributions. Note that if we suppose the stronger assumption $\sup_{t \in [0,T]} \|\nabla b(t,.)\|_{L^\rho}<\infty$ for $\rho\in(d,+\infty]$, then, according to the boundedness assumption and Corollary $IX.14$ \cite{Brezis}, the drift is locally $\left(1 - \frac{d}{\rho} \right)$-H\"older continuous in space.
\item Theorem 1.6 \cite{MenoKona} deals with a drift coefficient bounded, continuously differentiable with bounded derivatives up to the order $2$ in time and the order $4$ in space outside a finite union of time-independent smooth submanifolds where it can be discontinuous. Its constant diffusion statement says that when $m$ is a Dirac mass, then the absolute difference between the densities is bounded from above by a Gaussian density multiplied by a factor sum of a term with order $\frac{1}{d}-$ in our total variation rate $\frac{h}{\sqrt{kh}}$ and a term with order $1-$ in $h$ over the distance to the discontinuity set. 
\end{itemize}
\end{remark}

Furthermore, if we assume more regularity on $m$ in addition to the spatial regularity on $b$, we obtain the following result:
\begin{prop}\label{propregM}
  Assume $b:[0,T]\times\R^d \to \R^d$ is measurable and bounded by $B<+\infty$. Moreover, assume that $\,\sup_{t \in [0,T]} \|\nabla \cdot b(t,.)\|_{L^\rho}<+\infty$ for some $\rho \in [d,+\infty]$ or that for $d=1$, $\sup_{t \in [0,T]}\left\|\partial_x b(t,.)\right\|_{\rm TV}<+\infty$; where $\nabla \cdot b(t,.)$ and $\partial_x b(t,.)$ are respectively the spatial divergence and the spatial derivative of $b$ in the sense of distributions. If $m$ admits a density w.r.t. the Lebesgue measure that belongs to $W^{1,1}\left(\R^d\right)$ then:
  \begin{align*}
    \displaystyle \exists \, \hat{C}<+\infty, \;\forall h \in (0,T],\; \forall k \in \left\llbracket 0, \left\lfloor \frac{T}{h} \right\rfloor \right\rrbracket, \quad \left\|\mu_{kh}-\mu^h_{kh} \right\|_{\rm TV} \le \hat{C}\Big(1 + \ln\left(k\right)\Big)h.
  \end{align*}
\end{prop}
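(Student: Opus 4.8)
The plan is to repeat the mild-equation comparison used to prove Theorem~\ref{cvgenceB}, feeding in the extra regularity of $m$ to upgrade the a priori control of the spatial gradients of the two densities. With $g_t(x) = (2\pi t)^{-d/2}e^{-|x|^2/(2t)}$ the heat kernel, Proposition~\ref{propspde} gives the mild equations
$$p(t,\cdot) = g_t * m_0 - \int_0^t \nabla g_{t-s} * \big(b(s,\cdot)\,p(s,\cdot)\big)\,ds,\qquad p^h(t,\cdot) = g_t * m_0 - \int_0^t \nabla g_{t-s} * \Psi^h_s\,ds,$$
where $\Psi^h_s(x) = \E\big[b(\delta_{\ell_s},X^h_{\tau^h_s})\,g_{s-\tau^h_s}\big(x - X^h_{\tau^h_s} - (s-\tau^h_s)\,b(\delta_{\ell_s},X^h_{\tau^h_s})\big)\big]$ is the $b$-weighted kernel obtained by freezing the space variable at $X^h_{\tau^h_s}$ and the time variable at the randomized instant $\delta_{\ell_s}$. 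Subtracting, bounding $\|\nabla g_{t-s}\|_{L^1}\le C(t-s)^{-1/2}$ and $\|b\|_\infty\le B$, and invoking the same time-singular Grönwall argument used to prove Theorem~\ref{cvgenceB}, one reduces the estimate of $\|\mu_{kh}-\mu^h_{kh}\|_{\rm TV} = \|p(kh,\cdot)-p^h(kh,\cdot)\|_{L^1}$ to a bound on the consistency error $\int_0^{kh}\nabla g_{kh-s}*\big(\Psi^h_s - b(s,\cdot)\,p^h(s,\cdot)\big)\,ds$. In the proof of Theorem~\ref{cvgenceB} this error has size $\tfrac{\tilde C}{\sqrt{kh}}(1+\ln k)h$, and the prefactor $1/\sqrt{kh}$ is present only because, for a general initial law, one merely has $\|\nabla p(s,\cdot)\|_{L^1}$, $\|\nabla p^h(s,\cdot)\|_{L^1}\le Cs^{-1/2}$ and $\|\nabla^2 p^h(s,\cdot)\|_{L^1}\le Cs^{-1}$ near $s=0$; the present hypotheses kill these blow-ups.

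First I would show that, under these hypotheses, $\sup_{s\in(0,T]}\|\nabla p(s,\cdot)\|_{L^1}<\infty$ and $\sup_{s\in(0,T]}\|\nabla p^h(s,\cdot)\|_{L^1}<\infty$, uniformly in $h$. The starting point is the elementary bound $\|\nabla(g_s*m_0)\|_{L^1} = \|g_s*\nabla m_0\|_{L^1}\le\|\nabla m_0\|_{L^1}$, which replaces $\|\nabla(g_s*m)\|_{L^1}\le Cs^{-1/2}\|m\|_{\rm TV}$. Differentiating the mild equation for $p$ and using the divergence structure $\nabla\cdot(bp) = (\nabla\cdot b)\,p + b\cdot\nabla p$,
$$\nabla p(t,\cdot) = g_t*\nabla m_0 - \int_0^t \nabla g_{t-s}*\big((\nabla\cdot b(s,\cdot))\,p(s,\cdot) + b(s,\cdot)\cdot\nabla p(s,\cdot)\big)\,ds,$$
so that $\|\nabla p(t,\cdot)\|_{L^1}\le\|\nabla m_0\|_{L^1} + C\int_0^t (t-s)^{-1/2}\big(\|(\nabla\cdot b(s,\cdot))\,p(s,\cdot)\|_{L^1} + B\|\nabla p(s,\cdot)\|_{L^1}\big)\,ds$. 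When $\rho<\infty$, Hölder's inequality bounds $\|(\nabla\cdot b(s,\cdot))\,p(s,\cdot)\|_{L^1}$ by $\|\nabla\cdot b(s,\cdot)\|_{L^\rho}\|p(s,\cdot)\|_{L^{\rho/(\rho-1)}}$, and since $\rho\ge d$ forces $\rho/(\rho-1)\le d/(d-1)$ while $W^{1,1}(\R^d)$ embeds continuously into $L^{d/(d-1)}(\R^d)$ for $d\ge 2$, a short Grönwall bootstrap of the mild equation in the $L^{\rho/(\rho-1)}$ norm gives $\sup_{s\le T}\|p(s,\cdot)\|_{L^{\rho/(\rho-1)}}<\infty$; when $\rho=\infty$ the term is simply $\le\|\nabla\cdot b(s,\cdot)\|_{L^\infty}$; and when $d=1$ (for either hypothesis, including $\sup_t\|\partial_x b(t,\cdot)\|_{\rm TV}<\infty$) one uses instead $\|(\partial_x b(s,\cdot))\,p(s,\cdot)\|_{\rm TV}\le\|\partial_x b(s,\cdot)\|_{\rm TV}\|p(s,\cdot)\|_{L^\infty}$ together with $W^{1,1}(\R)\hookrightarrow L^\infty(\R)$ and the corresponding $L^\infty$ bootstrap. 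The time-singular Grönwall lemma then yields the uniform bound on $\|\nabla p(t,\cdot)\|_{L^1}$, and the very same computation applied to the perturbed mild equation — the extra $\Psi^h_s$-contributions being estimated as in Section~\ref{preuveB} — gives the uniform bound on $\|\nabla p^h(t,\cdot)\|_{L^1}$.

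With these uniform gradient bounds at hand, I would re-run the consistency-error estimate of Theorem~\ref{cvgenceB} essentially verbatim, except that every occurrence of $\|\nabla p^h(s,\cdot)\|_{L^1}\le Cs^{-1/2}$, and likewise every occurrence of $\|\nabla^2 p^h(s,\cdot)\|_{L^1}\le Cs^{-1}$ (which improves to $Cs^{-1/2}$ since $\|\nabla^2(g_s*m_0)\|_{L^1} = \|\nabla g_s*\nabla m_0\|_{L^1}\le Cs^{-1/2}\|\nabla m_0\|_{L^1}$), is replaced by its counterpart with one power of $\sqrt s$ gained. The consistency error at time $kh$ then becomes $O\big((1+\ln k)h\big)$; since the kernel $(t-s)^{-1/2}$ integrates to a finite quantity over $[0,t]$ against such a source, the concluding Grönwall step produces $\|\mu_{kh}-\mu^h_{kh}\|_{\rm TV}\le\hat C(1+\ln k)h$.

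I expect the main obstacle to be the bound on $\|\nabla p^h(s,\cdot)\|_{L^1}$ uniform in $s\in(0,T]$ and in $h$: one must differentiate the perturbed mild equation, whose effective drift is evaluated at the frozen randomized state $(\delta_{\ell_s},X^h_{\tau^h_s})$ and is smoothed by the sub-step Gaussian kernel $g_{s-\tau^h_s}$, and check that no perturbation term introduces a time singularity worse than $(t-s)^{-1/2}$ — in particular that the auxiliary $L^{\rho/(\rho-1)}$ (respectively $L^\infty$) bounds on $p^h(s,\cdot)$ needed to treat the divergence (respectively derivative-measure) term are themselves uniform in $s$ and $h$. Everything else is a transcription of the estimates already established for Theorem~\ref{cvgenceB}, with the weights $s^{-1/2}$ and $s^{-1}$ softened by a factor $s^{1/2}$ thanks to $m_0\in W^{1,1}(\R^d)$.
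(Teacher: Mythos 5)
There is a genuine gap, and it sits exactly where you locate "the main obstacle": your argument is built on proving $\sup_{s\in(0,T]}\left\|\nabla p^h(s,\cdot)\right\|_{L^1}<\infty$ (and even $\left\|\nabla^2 p^h(s,\cdot)\right\|_{L^1}\lesssim s^{-1/2}$) \emph{uniformly in $h$}, and this is neither a transcription of the estimates of Section \ref{preuveB} nor plausibly available under the stated hypotheses. The one-step Euler map $x\mapsto x+h\,b(\delta_k,x)$ has no smoothness when $b$ is merely measurable with $\nabla\cdot b(t,\cdot)\in L^\rho$ (or $\partial_x b(t,\cdot)$ a measure in $d=1$): it may fail to be injective, and the pushforward of a $W^{1,1}$ density by it need not lie in $W^{1,1}$, so the only smoothing at your disposal is the Gaussian increment, which generically gives $\left\|\nabla p^h(kh,\cdot)\right\|_{L^1}\lesssim h^{-1/2}$, not a bound uniform in $h$. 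Trying instead to differentiate the perturbed mild equation \eqref{mildfn1} produces $\left\|\nabla^2 G_{t-s}\right\|_{L^1}\sim (t-s)^{-1}$, and the integration by parts that rescues this for the limit equation (Lemma \ref{Sobol}) is unavailable here because the drift is evaluated at the frozen point $X^h_{\tau^h_s}$ (with the within-step offset $b(\delta_{\ell_s},X^h_{\tau^h_s})(s-\tau^h_s)$ inside the kernel argument), so one would need distributional divergence control of the image of $b\,d\mathbb{P}$ under the Euler map, which the assumptions do not provide. A secondary inaccuracy feeds this detour: the proof of Theorem \ref{cvgenceB} never uses $\left\|\nabla p^h(s,\cdot)\right\|_{L^1}\le Cs^{-1/2}$ or $\left\|\nabla^2 p^h(s,\cdot)\right\|_{L^1}\le Cs^{-1}$; the prefactor $1/\sqrt{kh}$ there comes only from the first-time-step term and from the bound $\left\|\nabla\cdot\big(b(s,\cdot)p(s,\cdot)\big)\right\|_{\rm TV}\le M/\sqrt{s}$ together with the time-modulus of $p$ — all statements about the \emph{exact} density.

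The first half of your program (uniform-in-time $W^{1,1}$ control of $p$ via the mild equation, using $W^{1,1}\hookrightarrow L^{\rho/(\rho-1)}$, resp.\ $L^\infty$ when $d=1$, when $m$ has a $W^{1,1}$ density) is sound and is essentially the paper's Proposition \ref{regFixpoint}/Theorem \ref{sobolP} combined with Lemma \ref{Sobol}, yielding the uniform bound \eqref{hyp2} on $\left\|\nabla\cdot\big(b(t,\cdot)p(t,\cdot)\big)\right\|_{\rm TV}$. The paper then shows the rest requires no regularity of $p^h$ at all: one keeps the decomposition into $R^h(k,\cdot)$ (Proposition \ref{mildfn2}, controlled by heat-kernel second-derivative $L^1$ bounds and $\|b\|_\infty$ only), the first-step term (now rewritten in divergence form and bounded by $2\tilde M h$ via \eqref{hyp2}), $V_1^h$ and $V_2^h$ (bounded using \eqref{hyp2} and the improved time-modulus of $p$, Lemma \ref{evoldensM}), and $V_3^h$, which involves $p^h$ only through $\left\|p(jh,\cdot)-p^h(jh,\cdot)\right\|_{L^1}$ and closes by discrete Gronwall. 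To repair your proposal you should drop the uniform gradient bounds on $p^h$ and redirect the improvement entirely onto $p$, comparing the scheme's source with $b(s,\cdot)p^h(\tau^h_s,\cdot)$ (as in Proposition \ref{mildfn2}) rather than with $b(s,\cdot)p^h(s,\cdot)$, so that the frozen-versus-current discrepancy is absorbed into $R^h$ and the Gronwall term instead of into regularity of the scheme's density.
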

\begin{remark}
  \begin{itemize}
    \item We can see in Theorem \ref{cvgenceB} that when $b$ is more regular with respect to the space variables, the weak convergence rate in total variation is bounded by $\left(1 + \ln\left(k\right)\right)h$ times a prefactor $\frac{1}{\sqrt{kh}}$ that decreases over time and explodes in small time. According to Proposition \ref{propregM}, this prefactor is removed when assuming more regularity on $m$.
    \item For $\varphi:\R^d\to\R$, measurable and bounded and using Equation \eqref{totVar}, we deduce from Theorem \ref{cvgence} that $\left|\E\left[\varphi\left(X^h_{kh}\right)\right]-\E\left[\varphi\left(X_{kh}\right)\right]\right|\le C \|\varphi\|_\infty \sqrt h$, from Theorem \ref{cvgenceB} that $\left|\E\left[\varphi\left(X^h_{kh}\right)\right]-\E\left[\varphi\left(X_{kh}\right)\right]\right|\le \frac{\tilde{C} \|\varphi\|_\infty}{\sqrt{kh}}\left(1 + \ln\left(k\right)\right)h$ and from Proposition \ref{propregM} that $\left|\E\left[\varphi\left(X^h_{kh}\right)\right]-\E\left[\varphi\left(X_{kh}\right)\right]\right|\le \hat{C} \|\varphi\|_\infty \left(1 + \ln\left(k\right)\right)h$.
  \end{itemize}
 \end{remark}

The proofs of the two theorems and the proposition, that we will detail in Sections \ref{preuve} and \ref{preuveB}, rely on the propositions that we present in Section \ref{mild}. Before going any further, we investigate through the Lamperti transform the application of those theorems to SDEs with non-constant diffusion coefficient in dimension $d=1$.


\subsection{Application to one-dimensional SDEs with non-constant diffusion coefficient}

Let us consider the one-dimensional stochastic differential equation:
\begin{align}\label{diffusY}
  Y_t = Y_0 + \int_0^t \sigma\left(Y_s \right)\,dW_s + \int_0^t \beta\left(s,Y_s \right)\,ds, \quad t \in [0,T]
\end{align}
where $(W_t)_{t \ge 0}$ is a one-dimensional Brownian motion independent from $Y_0$ which is some $(l,r)$-valued random variable with $-\infty \le l < r \le +\infty$. Let $z \in (l,r)$, we assume that $\sigma: (l,r) \to \R^{*}_{+}$ is a $\mathcal{C}^1$ function with $\displaystyle \lim_{y \to \substack{r \\ l} } \int_{z}^y \frac{dw}{\sigma(w)}= \pm \infty$, $\beta: [0,T]\times (l,r) \to \R$ is measurable and that $(t,x) \mapsto \left( \frac{\beta(t,x)}{\sigma(x)} - \frac{\sigma^{'}(x)}{2}\right)$ is bounded on $[0,T] \times (l,r)$. We introduce the Lamperti transform $\Big(X_t = \psi\left(Y_t\right)\Big)_{t\in [0,T]}$ where $\psi: (l,r) \to \R$ is defined by $\displaystyle \psi(y) = \int_z^y \frac{dw}{\sigma(w)}$ . By It\^o's formula, $\left(X_t\right)_{t\in [0,T]}$ is solution to the dynamics \eqref{edsnl} for the choice $d=1$, $\displaystyle b(t,.)= \left( \frac{\beta(t,.)}{\sigma} - \frac{\sigma^{'}}{2}\right)\circ \psi^{-1}$ and initialized by $\psi(Y_0)$:
\begin{align}\label{diffusX}
  X_t = \psi(Y_0) + W_t + \int_0^t \left( \frac{\beta(s,.)}{\sigma} - \frac{\sigma^{'}}{2}\right)\circ\psi^{-1}\left(X_s \right) \,ds, \quad t \in [0,T].
\end{align}
Existence and uniqueness for \eqref{diffusY} can be deduced from the existence and uniqueness for \eqref{diffusX}.  Indeed, according to \cite{Veret}, the SDE \eqref{diffusX} admits a pathwise unique strong solution $\left(X_t\right)_{t \in [0,T]}$. By It\^o's formula, $\left(\psi^{-1}\left(X_t \right)\right)_{t \in [0,T]}$ is a solution to \eqref{diffusY}. As for the uniqueness, the images of any two solutions of \eqref{diffusY} by $\psi$ coincide by uniqueness for \eqref{diffusX}. Since $\psi: (l,r) \to \R$ is one to one, these two solutions coincide. For $t \ge 0$, we denote by $\nu_t$ the probability distribution of $Y_t$. The law $\mu_t$ of $X_t$ is then the pushforward of $\nu_t$ by $\psi$ i.e. $\mu_t = \psi \# \nu_t$ and conversely  $\nu_t = \psi^{-1} \# \mu_t$. \\ 

We are going to approximate $\left(Y^h_{kh}\right)_{k \in \left\llbracket0,\left\lfloor \frac{T}{h} \right\rfloor\right\rrbracket}$ by $\left(\psi^{-1}\left(X^h_{kh}\right)\right)_{k \in \left\llbracket0,\left\lfloor \frac{T}{h} \right\rfloor\right\rrbracket}$ where $\left(X^h_{kh}\right)_{k \in \left\llbracket0,\left\lfloor \frac{T}{h} \right\rfloor\right\rrbracket}$ is the Euler scheme of \eqref{diffusX} with time-step $h \in (0,T]$ initialized by $X^h_0 = \psi(Y_0)$ and evolving inductively on the time grid $(kh)_{k \in \left\llbracket0,\left\lfloor \frac{T}{h} \right\rfloor\right\rrbracket}$ by:
\begin{align}\label{eulerX}
  X^h_{(k+1)h}=X^h_{kh}+\left(W_{(k+1)h}-W_{kh}\right)+\left( \frac{\beta(\delta_k,.)}{\sigma} - \frac{\sigma^{'}}{2}\right)\circ \psi^{-1}\left(X^h_{kh}\right)h, 
\end{align}
where the random variables $\delta_k$ are independent, distributed according to the uniform law on $[kh,(k+1)h]$ and are independent from $\left(Y_0,(W_t)_{t\ge 0}\right)$.\\ We denote by $\nu^h_t$ the law of $\psi^{-1}\left(X^h_t \right)$. Since for $\varphi \in \mathcal{L}$ defined right after \eqref{totVar}, $\varphi \circ \psi^{-1} \in \mathcal{L}$, we have:
\begin{align*}
  \displaystyle \left\|\nu_{kh}-\nu^h_{kh} \right\|_{\rm TV} = \left\|\psi^{-1} \#\mu_{kh}-\psi^{-1} \#\mu^h_{kh} \right\|_{\rm TV} &= \sup_{\varphi \in \mathcal{L}} \int_{\R}\varphi\left(\psi^{-1}(x)\right)\left(\mu_{kh}(dx) - \mu_{kh}^h(dx)\right) \le \left\|\mu_{kh}-\mu^h_{kh} \right\|_{\rm TV}.
\end{align*}
On the other hand, for $\mathcal{\hat L}$ denoting the set of all measurable functions $\hat \varphi:(l,r) \to [-1,1]$, if $\varphi \in \mathcal{L}$ then $\hat \varphi = \varphi \circ \psi \in \mathcal{\hat L}$ and:
\begin{align*}
  \displaystyle \left\|\mu_{kh}-\mu^h_{kh} \right\|_{\rm TV} = \sup_{\varphi \in \mathcal{L}}\int_{\R} \varphi(x)\Big(\mu_{kh}(dx) - \mu_{kh}^h(dx)\Big) &= \sup_{\varphi \in \mathcal{L}}\int_{\R} \varphi\circ \psi(x)\left(\nu_{kh}(dx) - \nu_{kh}^h(dx)\right)\\
  &\le \sup_{\hat \varphi \in \mathcal{\hat L}}\int_{(l,r)} \hat \varphi(x)\left(\nu_{kh}(dx) - \nu_{kh}^h(dx)\right) = \left\|\nu_{kh}-\nu^h_{kh}\right\|_{\rm TV}.
\end{align*}

Therefore, $\left\|\nu_{kh}-\nu^h_{kh} \right\|_{\rm TV} = \left\|\mu_{kh}-\mu^h_{kh} \right\|_{\rm TV}$ and we obtain directly from Theorem \ref{cvgence} the following result for the weak convergence rate of $\nu^h_t$ towards $\nu_t$:
\begin{thm}\label{cvgenceY}
  Assume $\sigma:(l,r) \to \R^{*}_{+}$ is $\mathcal{C}^1$ with $\displaystyle \lim_{y \to \substack{r \\ l} } \int_{z}^y \frac{dw}{\sigma(w)}= \pm \infty$, $\beta:[0,T]\times(l,r) \to \R$ is measurable and $(t,x) \mapsto \left(\frac{\beta(t,x)}{\sigma(x)} - \frac{\sigma^{'}(x)}{2}\right)$ is bounded on $[0,T] \times (l,r)$. Then:
  \begin{align*}
    \exists \, C<+\infty,\;\forall h \in (0,T],\; \forall k \in \left\llbracket 0, \left\lfloor \frac{T}{h}\right\rfloor \right\rrbracket, \quad \left\|\nu_{kh}-\nu^h_{kh} \right\|_{\rm TV}\le C \sqrt h.
  \end{align*}
\end{thm}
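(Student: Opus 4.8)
The idea is that all the analytic work has already been carried out in Theorem \ref{cvgence}, and it only remains to transport it through the Lamperti change of variables. First I would check that the drift
\[
b(t,x) = \left(\frac{\beta(t,\cdot)}{\sigma} - \frac{\sigma'}{2}\right)\circ\psi^{-1}(x)
\]
of the transformed equation \eqref{diffusX} satisfies the hypotheses of Theorem \ref{cvgence} with $d=1$. Since $\sigma$ is $\mathcal{C}^1$ and strictly positive on $(l,r)$ and $\int_z^{\,\cdot}\frac{dw}{\sigma(w)}$ tends to $\pm\infty$ at the endpoints $r$ and $l$, the map $\psi:(l,r)\to\R$ is an increasing $\mathcal{C}^1$-bijection; hence $\psi^{-1}:\R\to(l,r)$ is well defined on all of $\R$ and continuous, so $b$ is a genuine function on $[0,T]\times\R$, measurable as a composition of measurable maps, and bounded by the same constant $B$ for which $(t,x)\mapsto \frac{\beta(t,x)}{\sigma(x)}-\frac{\sigma'(x)}{2}$ is bounded on $[0,T]\times(l,r)$ by assumption. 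Thus \eqref{diffusX} is exactly of the form \eqref{edsnl} with $d=1$ and initial law $m=\psi\#(\text{law of }Y_0)$.

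Next I would note that the scheme \eqref{eulerX} is precisely the randomized-time Euler--Maruyama scheme \eqref{euler} associated with this $b$, because $\left(\frac{\beta(\delta_k,\cdot)}{\sigma}-\frac{\sigma'}{2}\right)\circ\psi^{-1}(X^h_{kh}) = b(\delta_k,X^h_{kh})$, and the randomizing variables $\delta_k$ have the required law and independence. Consequently Theorem \ref{cvgence} applies verbatim and furnishes a constant $C<+\infty$ such that, for all $h\in(0,T]$ and all $k\in\left\llbracket 0,\left\lfloor\frac{T}{h}\right\rfloor\right\rrbracket$,
\[
\left\|\mu_{kh}-\mu^h_{kh}\right\|_{\rm TV}\le C\sqrt h,
\]
where $\mu^h_t$ denotes the law of $X^h_t$ from the continuous interpolation \eqref{conteuler}.

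Finally I would invoke the identity $\left\|\nu_{kh}-\nu^h_{kh}\right\|_{\rm TV} = \left\|\mu_{kh}-\mu^h_{kh}\right\|_{\rm TV}$ established in the two chains of inequalities just preceding the statement: it rests only on the facts that $\mu_t=\psi\#\nu_t$, $\nu_t=\psi^{-1}\#\mu_t$ (and likewise $\mu^h_t=\psi\#\nu^h_t$), and that $\varphi\mapsto\varphi\circ\psi$ and $\varphi\mapsto\varphi\circ\psi^{-1}$ are bijections between $\mathcal{L}$ and $\mathcal{\hat L}$ because $\psi$ is a bijection. Combining this equality with the previous display gives the announced estimate with the same constant $C$. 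I do not expect any real obstacle here: the only points requiring a line of care are that $\psi^{-1}$ is globally defined on $\R$ and measurable (so that the transformed drift is a legitimate bounded measurable coefficient) and that the total variation distance is invariant under pushforward by the bijection $\psi$ — both routine — so this theorem is essentially a corollary of Theorem \ref{cvgence}.
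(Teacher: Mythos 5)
Your proposal is correct and follows the paper's own route: the theorem is stated there as a direct consequence of Theorem \ref{cvgence} applied to the bounded measurable drift $\left(\frac{\beta(t,.)}{\sigma}-\frac{\sigma'}{2}\right)\circ\psi^{-1}$, combined with the identity $\left\|\nu_{kh}-\nu^h_{kh}\right\|_{\rm TV}=\left\|\mu_{kh}-\mu^h_{kh}\right\|_{\rm TV}$ obtained from the two pushforward inequalities, exactly as you argue.
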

Let us now discuss the assumptions on $\beta$ and $\sigma$ in order to apply Theorem \ref{cvgenceB}. According to Definition $3.4$ \cite{AmbFusPal}, the variation $V\Big(\left( \frac{\beta(t,.)}{\sigma} - \frac{\sigma^{'}}{2}\right)\circ \psi^{-1},\R\Big)$ of $\left( \frac{\beta(t,.)}{\sigma} - \frac{\sigma^{'}}{2}\right)\circ \psi^{-1}$ in $\R$ is defined by:
\begin{align*}
\displaystyle V\left(\left( \frac{\beta(t,.)}{\sigma} - \frac{\sigma^{'}}{2}\right)\circ \psi^{-1},\R\right) &:= \sup\left\{ \int_{\R}\left( \frac{\beta(t,.)}{\sigma} - \frac{\sigma^{'}}{2}\right)\circ \psi^{-1}(x)\varphi^{'}(x)\,dx :\; \varphi \in \mathcal{C}^1_c(\R), \; \|\varphi \|_{\infty}\le 1 \right\} \\
&:= \sup\left\{ \int_{(l,r)}\left( \frac{\beta(t,.)}{\sigma} - \frac{\sigma^{'}}{2}\right)(y)\left(\varphi\circ\psi\right)^{'}(y)\,dy :\; \varphi \in \mathcal{C}^1_c(\R), \; \|\varphi \|_{\infty}\le 1 \right\}. 
\end{align*}
Since $\psi$ is a $\mathcal{C}^1$-diffeomorphism from $(l,r)$ to $\R$:
\begin{align*}
\displaystyle V\left(\left( \frac{\beta(t,.)}{\sigma} - \frac{\sigma^{'}}{2}\right)\circ \psi^{-1},\R\right) &:= \sup\left\{ \int_{(l,r)}\left( \frac{\beta(t,.)}{\sigma} - \frac{\sigma^{'}}{2}\right)(y)\tilde \varphi^{'}(y)\,dy :\; \tilde \varphi \in \mathcal{C}^1_c((l,r)), \; \|\tilde \varphi \|_{\infty}\le 1 \right\} \\
&= V\left(\left( \frac{\beta(t,.)}{\sigma} - \frac{\sigma^{'}}{2}\right), (l,r) \right).
\end{align*}
Moreover, using Proposition $3.6$ \cite{AmbFusPal}, we have that: 
$$\displaystyle \left\|\partial_x b(t,.) \right\|_{\rm TV} = V\left(\left( \frac{\beta(t,.)}{\sigma} - \frac{\sigma^{'}}{2}\right)\circ \psi^{-1},\R\right) = V\left(\left( \frac{\beta(t,.)}{\sigma} - \frac{\sigma^{'}}{2}\right), (l,r) \right) = \left\|\partial_x \left(\frac{\beta(t,.)}{\sigma} - \frac{\sigma^{'}}{2}\right)\right\|_{\rm TV}$$ 
where the spatial derivatives are defined in the sense of distributions on $\R$ and $(l,r)$. Therefore, when assuming more regularity on $\left( \frac{\beta(t,.)}{\sigma} - \frac{\sigma^{'}}{2}\right)$ with respect to the space variables, we obtain a better rate of convergence:
\begin{thm}\label{cvgenceBY}
  Assume $\sigma:(l,r) \to \R^{*}_{+}$ is $\mathcal{C}^1$ with $\displaystyle \lim_{y \to \substack{r \\ l} } \int_{z}^y \frac{dw}{\sigma(w)}= \pm \infty$, $\beta:[0,T]\times(l,r) \to \R$ is measurable and $(t,x) \mapsto \left(\frac{\beta(t,x)}{\sigma(x)} - \frac{\sigma^{'}(x)}{2}\right)$ is bounded on $[0,T] \times (l,r)$. Moreover, assume that $\sup_{t \in [0,T]}\left\|\partial_x \left(\frac{\beta(t,.)}{\sigma} - \frac{\sigma^{'}}{2}\right)\right\|_{\rm TV}<+\infty$ where the spatial derivative is defined in the sense of distributions on $(l,r)$. Then:
  \begin{align*}
    \displaystyle \exists \, \tilde{C}<+\infty, \;\forall h \in (0,T],\; \forall k \in \left\llbracket 0, \left\lfloor \frac{T}{h} \right \rfloor \right\rrbracket, \quad \left\|\nu_{kh}-\nu^h_{kh} \right\|_{\rm TV} \le \frac{\tilde{C}}{\sqrt{kh}}\Big(1 + \ln\left(k\right)\Big)h.
  \end{align*}
\end{thm}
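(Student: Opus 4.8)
The plan is to obtain Theorem \ref{cvgenceBY} as a direct consequence of Theorem \ref{cvgenceB} applied to the Lamperti-transformed equation \eqref{diffusX}, all the analytic groundwork having been laid in the paragraphs preceding the statement. Recall that the Euler scheme \eqref{eulerX} is nothing but the generic randomized-time scheme \eqref{euler} for $d=1$ associated with the drift $b(t,\cdot)=\left(\frac{\beta(t,\cdot)}{\sigma}-\frac{\sigma'}{2}\right)\circ\psi^{-1}$ and initialized at $\psi(Y_0)$, and that $\nu^h_t$ is by definition the law of $\psi^{-1}(X^h_t)$, where $X^h_t$ is the continuous-time interpolation \eqref{conteuler} of that scheme. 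So everything reduces to checking that $b$ fits the framework of Theorem \ref{cvgenceB} and then transferring its conclusion from $(\mu_t,\mu^h_t)$ back to $(\nu_t,\nu^h_t)$.

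First I would verify the hypotheses of Theorem \ref{cvgenceB} for this $b$. Measurability holds because $\beta$ is measurable, $\sigma$ and $\sigma'$ are continuous on $(l,r)$, and $\psi^{-1}\colon\R\to(l,r)$ is a $\mathcal{C}^1$ bijection (the prescribed behaviour of $y\mapsto\int_z^y\frac{dw}{\sigma(w)}$ at $l$ and $r$ is exactly what forces $\psi$ to be onto $\R$). Boundedness of $b$ by some finite $B$ follows from the assumed boundedness of $(t,x)\mapsto\frac{\beta(t,x)}{\sigma(x)}-\frac{\sigma'(x)}{2}$ on $[0,T]\times(l,r)$ together with $\psi^{-1}(\R)\subset(l,r)$; in particular Proposition \ref{propspde} applies so that $p(t,\cdot)$ and $p^h(t,\cdot)$ exist. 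The remaining one-dimensional BV requirement $\sup_{t\in[0,T]}\|\partial_x b(t,\cdot)\|_{\rm TV}<+\infty$ is precisely the assumed bound $\sup_{t\in[0,T]}\left\|\partial_x\left(\frac{\beta(t,\cdot)}{\sigma}-\frac{\sigma'}{2}\right)\right\|_{\rm TV}<+\infty$, in view of the chain of equalities $\|\partial_x b(t,\cdot)\|_{\rm TV}=V\left(b(t,\cdot),\R\right)=V\left(\frac{\beta(t,\cdot)}{\sigma}-\frac{\sigma'}{2},(l,r)\right)=\left\|\partial_x\left(\frac{\beta(t,\cdot)}{\sigma}-\frac{\sigma'}{2}\right)\right\|_{\rm TV}$ established above through the substitution $\tilde\varphi=\varphi\circ\psi$, which is licit since $\psi$ is a $\mathcal{C}^1$-diffeomorphism of $(l,r)$ onto $\R$ (Definition 3.4 and Proposition 3.6 in \cite{AmbFusPal}).

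Having checked these, I would invoke Theorem \ref{cvgenceB} with the drift $b$ above and with initial distribution equal to the law of $\psi(Y_0)$: there exists a finite $\tilde C$ (the very one of that theorem) such that $\|\mu_{kh}-\mu^h_{kh}\|_{\rm TV}\le\frac{\tilde C}{\sqrt{kh}}\bigl(1+\ln(k)\bigr)h$ for all $h\in(0,T]$ and $k\in\llbracket 0,\lfloor T/h\rfloor\rrbracket$. Finally I would transfer this to $\nu$: since $\mu_t=\psi\#\nu_t$ and $\mu^h_t=\psi\#\nu^h_t$, and since $\varphi\mapsto\varphi\circ\psi$ sends the class $\mathcal{L}$ of $[-1,1]$-valued measurable functions on $\R$ into the analogous class $\mathcal{\hat L}$ on $(l,r)$ while $\varphi\mapsto\varphi\circ\psi^{-1}$ sends $\mathcal{\hat L}$ into $\mathcal{L}$, one gets $\|\nu_{kh}-\nu^h_{kh}\|_{\rm TV}=\|\mu_{kh}-\mu^h_{kh}\|_{\rm TV}$, exactly as derived before the statement, and the claimed estimate follows. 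There is no real obstacle here: the genuine analytic content — the invariance of the total variation distance under the bijective Lamperti map and the change-of-variables formula for the total variation of a distributional derivative under a $\mathcal{C}^1$-diffeomorphism — is already in place, so the work reduces to a routine hypothesis check and an assembly of identities; the only point worth a line of care is confirming that $\psi^{-1}$ applied to the \emph{continuous-time} interpolation of \eqref{eulerX}, not merely to its values on the grid, has law $\nu^h_t$, which is immediate from \eqref{conteuler} and the definition of $\nu^h_t$.
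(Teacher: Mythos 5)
Your proposal is correct and follows exactly the paper's own route: the paper proves Theorem \ref{cvgenceBY} in the text preceding its statement, by checking that the Lamperti-transformed drift $\left(\frac{\beta(t,.)}{\sigma}-\frac{\sigma'}{2}\right)\circ\psi^{-1}$ is bounded, measurable, and has $\sup_{t}\|\partial_x b(t,.)\|_{\rm TV}<+\infty$ via the change-of-variables identity for the variation under the $\mathcal{C}^1$-diffeomorphism $\psi$ (Definition 3.4 and Proposition 3.6 of \cite{AmbFusPal}), then applying Theorem \ref{cvgenceB} and using the equality $\left\|\nu_{kh}-\nu^h_{kh}\right\|_{\rm TV}=\left\|\mu_{kh}-\mu^h_{kh}\right\|_{\rm TV}$. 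Your hypothesis check and transfer of the estimate coincide with the paper's argument, so nothing further is needed.
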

We will now discuss the additional assumptions on the law of $Y_0$ and $\sigma$ in order to apply Proposition \ref{propregM}. Let us assume that $Y_0$ admits a density $q_0$ w.r.t. the Lebesgue measure. By a change of variables, $X_0$ admits the density $(\sigma q_0)\circ \psi^{-1}$ w.r.t. the Lebesgue measure. Since $q_0 \in L^1\left((l,r)\right)$ and $\sigma$ is $\mathcal{C}^1$ so locally bounded on $(l,r)$, $(\sigma q_0)$ defines a distribution. Moreover, we assume that $\Big(\sigma q_0\Big)' \in L^1\left((l,r)\right)$ where the derivative of $(\sigma q_0)$ is defined in the sense of distributions. Now, let $I$ be a compact subinterval of $(l,r)$ and let $\varphi$ be a $\mathcal{C}^{\infty}_c$ function on $\R$ such that $(\varphi \circ \psi)$ is null outside $I$. We have, through a change of variables, that:
\begin{align*}
  \displaystyle \int_{\R} \varphi'(x)(\sigma q_0)\circ \psi^{-1}(x)\,dx = \int_{(l,r)} \Big(\varphi \circ \psi \Big)'(y) (\sigma q_0)(y)\,dy.
\end{align*}
Since $\left(\varphi \circ \psi\right) \in W^{1,1}\left(I\right)$ and $(\sigma q_0) \in W^{1,1}\left(I\right)$, we can apply Corollary $VIII.9$ \cite{Brezis} and obtain that:
\begin{align*}
  \displaystyle \int_{\R} \varphi'(x)(\sigma q_0)\circ \psi^{-1}(x)\,dx = - \int_{(l,r)} (\varphi \circ \psi)(y) \Big(\sigma q_0\Big)'(y)\,dy = -\int_{\R} \varphi(x) \Big(\sigma q_0\Big)'\circ \psi^{-1}(x) \sigma \circ \psi^{-1}(x)\,dx.
\end{align*}
Hence, we get, in the sense of distributions, that $\Big((\sigma q_0) \circ \psi^{-1}\Big)'=(\sigma q_0)'\circ \psi^{-1} \times  \sigma \circ \psi^{-1}$. Through a change of variables, we obtain that $\displaystyle \left\|\Big((\sigma q_0)\circ \psi^{-1} \Big)'\right\|_{L^1(\R)}=  \left\|\Big(\sigma q_0\Big)'\right\|_{L^1((l,r))} $ and since $(\sigma q_0)' \in L^1\left((l,r)\right)$, we conclude that the density of $X_0$ is in $W^{1,1}\left(\R\right)$.
\begin{prop}
  Assume $\sigma:(l,r) \to \R^{*}_{+}$ is $\mathcal{C}^1$ with $\displaystyle \lim_{y \to \substack{r \\ l} } \int_{z}^y \frac{dw}{\sigma(w)}= \pm \infty$, $\beta:[0,T]\times(l,r) \to \R$ is measurable and $(t,x) \mapsto \left(\frac{\beta(t,x)}{\sigma(x)} - \frac{\sigma^{'}(x)}{2}\right)$ is bounded on $[0,T] \times (l,r) $. Moreover, assume that $\sup_{t \in [0,T]}\left\|\partial_x \left(\frac{\beta(t,.)}{\sigma} - \frac{\sigma^{'}}{2}\right)\right\|_{\rm TV}<+\infty$ where the spatial derivative is defined in the sense of distributions on $(l,r)$. If $Y_0$ admits a density $q_0$ such that $\left\|(\sigma q_0)'\right\|_{L^1\left((l,r)\right)} < +\infty$, where the spatial derivative is defined in the sense of distributions, then:
  \begin{align*}
    \displaystyle \exists \, \hat{C}<+\infty, \;\forall h \in (0,T],\; \forall k \in \left\llbracket 0, \left\lfloor \frac{T}{h} \right\rfloor \right\rrbracket, \quad \left\|\nu_{kh}-\nu^h_{kh} \right\|_{\rm TV} \le \hat{C}\Big(1 + \ln\left(k\right)\Big)h.
  \end{align*}
\end{prop}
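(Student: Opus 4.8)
The plan is to obtain this proposition as a direct application of Proposition \ref{propregM} to the Lamperti-transformed equation \eqref{diffusX}, so that the substance of the argument reduces to checking that the hypotheses of that proposition hold for the drift $b(t,\cdot) = \left(\frac{\beta(t,\cdot)}{\sigma} - \frac{\sigma'}{2}\right)\circ\psi^{-1}$ and for the law $m$ of $X_0 = \psi(Y_0)$, and then to transferring the resulting bound from $\mu$ to $\nu$.

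First I would recall, as already noted above, that $(X_t)_{t\in[0,T]}$ solves \eqref{edsnl} in dimension $d=1$ with this $b$, which is measurable and is bounded since $(t,x)\mapsto \frac{\beta(t,x)}{\sigma(x)} - \frac{\sigma'(x)}{2}$ is assumed bounded on $[0,T]\times(l,r)$. For the spatial regularity hypothesis, I would invoke the chain of equalities established just before Theorem \ref{cvgenceBY}, which gives $\|\partial_x b(t,\cdot)\|_{\rm TV} = \left\|\partial_x\left(\frac{\beta(t,\cdot)}{\sigma} - \frac{\sigma'}{2}\right)\right\|_{\rm TV}$ for each $t$; taking the supremum over $t\in[0,T]$ then shows that the $d=1$ version of the regularity assumption of Proposition \ref{propregM} holds.

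Next I would verify the extra assumption on the initial law. Since $\psi$ is a $\mathcal{C}^1$-diffeomorphism from $(l,r)$ onto $\R$, the change of variables $x=\psi(y)$ shows that $X_0$ admits the Lebesgue density $(\sigma q_0)\circ\psi^{-1}$, which lies in $L^1(\R)$ because $\int_\R (\sigma q_0)\circ\psi^{-1}(x)\,dx = \int_{(l,r)} q_0(y)\,dy = 1$. The computation carried out in the paragraph preceding the statement — applying Corollary VIII.9 of \cite{Brezis} on compact subintervals and then a change of variables — identifies the distributional derivative as $\big((\sigma q_0)\circ\psi^{-1}\big)' = \big((\sigma q_0)'\circ\psi^{-1}\big)\times\big(\sigma\circ\psi^{-1}\big)$ and yields $\big\|\big((\sigma q_0)\circ\psi^{-1}\big)'\big\|_{L^1(\R)} = \|(\sigma q_0)'\|_{L^1((l,r))}$, which is finite by assumption. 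Hence the density of $X_0$ belongs to $W^{1,1}(\R)$, so $m$ meets the requirement of Proposition \ref{propregM}.

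With every hypothesis in place, Proposition \ref{propregM} provides a constant $\hat C<+\infty$ such that $\|\mu_{kh}-\mu^h_{kh}\|_{\rm TV}\le \hat C(1+\ln k)h$ for all $h\in(0,T]$ and $k\in\left\llbracket 0,\lfloor T/h\rfloor\right\rrbracket$. To conclude, I would use the identity $\|\nu_{kh}-\nu^h_{kh}\|_{\rm TV} = \|\mu_{kh}-\mu^h_{kh}\|_{\rm TV}$ established earlier (just before Theorem \ref{cvgenceY}, from the fact that $\varphi\mapsto\varphi\circ\psi^{-1}$ maps $\mathcal{L}$ into itself while $\varphi\mapsto\varphi\circ\psi$ maps $\mathcal{L}$ into $\mathcal{\hat L}$), which immediately transfers the bound to $\nu$. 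There is no genuine obstacle here; the only delicate point, already settled in the discussion preceding the statement, is the legitimacy of the distributional chain/product rule for $\big((\sigma q_0)\circ\psi^{-1}\big)'$, which rests on the local $W^{1,1}$ membership of both $\sigma q_0$ and $\varphi\circ\psi$.
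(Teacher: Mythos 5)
Your proposal is correct and follows essentially the same route as the paper: the statement is obtained by verifying, exactly as in the discussion preceding it, that the Lamperti-transformed drift satisfies the $d=1$ bounded-variation hypothesis, that $X_0=\psi(Y_0)$ has density $(\sigma q_0)\circ\psi^{-1}\in W^{1,1}(\R)$ via the distributional chain rule justified by Corollary VIII.9 of \cite{Brezis}, and then applying Proposition \ref{propregM} together with the identity $\left\|\nu_{kh}-\nu^h_{kh}\right\|_{\rm TV}=\left\|\mu_{kh}-\mu^h_{kh}\right\|_{\rm TV}$. No gaps.
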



\subsection{Existence of densities and mild equations}\label{mild}

 We are going to state, in the next result, the existence for $t>0$ of the densities $p(t,.)$ and $p^h(t,.)$ by showing that $p(t,.)$ solves a mild equation and $p^{h}(t,.)$ solves a perturbed version of this mild equation. Let $G_t(x) = \exp\left(-\frac{|x|^2}{2t}\right) \Big/ \sqrt{(2\pi t)^d } $ denote the heat kernel in $\R^{d}$, we have:
\begin{prop}\label{propspde}
  For each $t\in(0,T]$, $\mu_t$ and $\mu^h_t$ admit densities $p(t,.)$ and $p^h(t,.)$ with respect to the Lebesgue measure on $\R^d$ s.t. we have $dx$ a.e.:
  \begin{align}
    \displaystyle  p(t,x) &= G_t*m(x) - \int_0^t \nabla G_{t-s}*\Big(b(s,.)p(s,.)\Big)(x)\,ds, \label{mildf}\\
    \forall h \in (0,T], \quad p^h(t,x) &= G_t*m(x)-\int_0^t \E \left[\nabla G_{t-s}\left(x-X^h_s\right)\cdot b\left(\delta_{\ell_s}, X^h_{\tau^h_s}\right)\right]\,ds. \label{mildfn1} 
  \end{align}
\end{prop}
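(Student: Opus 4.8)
The plan is to obtain both identities — and, as a by-product, the existence of the two densities — from one and the same computation: testing the laws $\mu_t$ and $\mu^h_t$ against the solution of the backward heat equation. Fix $t\in(0,T]$ and $\varphi\in\mathcal{C}^\infty_c(\R^d)$, and set $u(s,x)=G_{t-s}\ast\varphi(x)$ for $s\in[0,t]$. Then $u(t,\cdot)=\varphi$, $\partial_s u+\tfrac12\Delta u=0$ on $[0,t)\times\R^d$, and $\nabla u(s,\cdot)=G_{t-s}\ast\nabla\varphi=\nabla G_{t-s}\ast\varphi$, $\partial_s u(s,\cdot)=-\tfrac12 G_{t-s}\ast\Delta\varphi$, $D^2 u(s,\cdot)=G_{t-s}\ast D^2\varphi$ are all bounded on the whole closed strip $[0,t]\times\R^d$, so that It\^o's formula may be applied to $u(s,\cdot)$ up to the terminal time $s=t$ in spite of the degeneracy of $G_{t-s}$ there. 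I shall also use repeatedly that $G_t\ast m\in L^1(\R^d)$ and that $\nabla G_r\in L^1(\R^d)$ with $\|\nabla G_r\|_{L^1}=c\,r^{-1/2}$, which is integrable on $(0,t)$.

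First I would prove \eqref{mildf}. Applying It\^o's formula to $u(s,X_s)$ along \eqref{edsnl} and using $\partial_s u+\tfrac12\Delta u=0$ gives $\varphi(X_t)=u(0,X_0)+\int_0^t\nabla u(s,X_s)\cdot b(s,X_s)\,ds+\int_0^t\nabla u(s,X_s)\cdot dW_s$; the stochastic integral is a genuine martingale because $\nabla u$ is bounded, so taking expectations and using $\int u(0,\cdot)\,dm=\int_{\R^d}\varphi\,(G_t\ast m)\,dx$ (symmetry of $G_t$) yields $\int\varphi\,d\mu_t=\int_{\R^d}\varphi\,(G_t\ast m)\,dx+\int_0^t\E\big[(\nabla G_{t-s}\ast\varphi)(X_s)\cdot b(s,X_s)\big]\,ds$, the time integrand being bounded by $B\,\|\nabla\varphi\|_{L^\infty}$. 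Next, I would rewrite $\E[(\nabla G_{t-s}\ast\varphi)(X_s)\cdot b(s,X_s)]=\int_{\R^d}(\nabla G_{t-s}\ast\varphi)(x)\cdot b(s,x)\,\mu_s(dx)$ and, using that $\nabla G_{t-s}$ is odd and Fubini — legitimate since $\mu_s$ is a probability measure, $|b|\le B$ and $\|\nabla G_{t-s}\|_{L^1}$ is integrable in $s$ — turn it into $-\int_{\R^d}\varphi(x)\,\big(\nabla G_{t-s}\ast(b(s,\cdot)\mu_s)\big)(x)\,dx$, where the convolution is the one of the Notation section applied to the bounded signed measures $b_i(s,\cdot)\mu_s$. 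Summing in $s$ gives, for every $\varphi\in\mathcal{C}^\infty_c(\R^d)$, $\int\varphi\,d\mu_t=\int_{\R^d}\varphi(x)\,\big[G_t\ast m(x)-\int_0^t\nabla G_{t-s}\ast(b(s,\cdot)\mu_s)(x)\,ds\big]\,dx$. Since each $\nabla G_{t-s}\ast(b(s,\cdot)\mu_s)$ is an $L^1(\R^d)$ function with norm $\le c\,B\,(t-s)^{-1/2}$, the bracket belongs to $L^1(\R^d)$; hence $\mu_t$ admits it as a density $p(t,\cdot)$, and because this holds for every $t\in(0,T]$ we may then write $b(s,\cdot)\mu_s=b(s,\cdot)p(s,\cdot)\,dx$, which turns the identity into exactly \eqref{mildf}.

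For \eqref{mildfn1} I would run the very same argument with $X^h$ in place of $X$: along \eqref{conteuler}, $X^h$ is a continuous semimartingale with $\langle X^h\rangle_s=sI_d$ and drift $\beta_s=b(\delta_{\ell_s},X^h_{\tau^h_s})$ bounded by $B$, and it is convenient to work in the filtration generated by $(X_0,W)$ enriched at time $0$ by the whole family $(\delta_k)_k$, so that $\beta$ is adapted and $W$ is still a Brownian motion, making the stochastic integral a martingale. The identical It\^o computation gives $\int\varphi\,d\mu^h_t=\int_{\R^d}\varphi\,(G_t\ast m)\,dx+\int_0^t\E\big[(\nabla G_{t-s}\ast\varphi)(X^h_s)\cdot b(\delta_{\ell_s},X^h_{\tau^h_s})\big]\,ds$, and writing $(\nabla G_{t-s}\ast\varphi)(X^h_s)=\int_{\R^d}\nabla G_{t-s}(X^h_s-x)\varphi(x)\,dx$, using the oddness of $\nabla G_{t-s}$, and applying Fubini (the $dx$-integral being dominated by $B\,\|\varphi\|_{L^\infty}\|\nabla G_{t-s}\|_{L^1}$, integrable in $s$) turns the right-hand side into $\int_{\R^d}\varphi(x)\,\big[G_t\ast m(x)-\int_0^t\E[\nabla G_{t-s}(x-X^h_s)\cdot b(\delta_{\ell_s},X^h_{\tau^h_s})]\,ds\big]\,dx$; the bracket is again an $L^1(\R^d)$ function, hence the density $p^h(t,\cdot)$ of $\mu^h_t$, and \eqref{mildfn1} follows. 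The hard part will not be any single estimate but the bookkeeping: keeping track of the signs produced by the oddness of $\nabla G_{t-s}$ and by the convolution convention, and checking that every interchange of $\E$, $\int_0^t ds$ and $\int_{\R^d}dx$ is covered by the integrability near $s=t$ of $\|\nabla G_{t-s}\|_{L^1}\sim(t-s)^{-1/2}$, together with the validity of It\^o's formula up to $s=t$ (which holds because $\nabla u=G_{t-s}\ast\nabla\varphi$ and $\partial_s u=-\tfrac12 G_{t-s}\ast\Delta\varphi$ stay bounded there). As an alternative entry point for the existence of the densities one could first remove the bounded drift by a Girsanov change of measure, getting $\mu_t,\mu^h_t\ll G_t\ast m$, and only then identify the densities via the mild equations; but the direct route above delivers both statements at once.
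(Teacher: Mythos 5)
Your proposal is correct and follows essentially the same route as the paper: test against the backward heat solution $G_{t-s}\ast\varphi$, apply It\^o's formula and the equation $\partial_s u+\tfrac12\Delta u=0$, take expectations, and use the symmetry of the heat kernel together with Fubini to transfer the convolution onto the test function and read off the densities. Your extra care about the integrability justifications, the filtration enriched by the $\delta_k$, and the bootstrap from the measure form $b(s,\cdot)\mu_s$ to $b(s,\cdot)p(s,\cdot)$ only makes explicit what the paper leaves implicit.
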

\begin{proof}
Let $t>0$, $f$ be a $\mathcal{C}^2$ and compactly supported function on $\R^d$. We set $\varphi(s,x) = G_{t-s} * f(x)$ for $(s,x) \in [0,t) \times \R^d$ and $\varphi(t,x) = f(x)$. The function $\varphi(s,x)$ is continuously differentiable w.r.t. $s$ and twice continuously differentiable w.r.t. $x$ on $[0,t]\times\R^d$ with bounded derivatives and solves
\begin{equation}\label{edpvarphi}
  \partial_s \varphi(s,x)+\frac{1}{2}\Delta \varphi(s,x)=0\mbox{ for }(s,x)\in[0,t]\times\R^d.
\end{equation}
We compute $\E\left[\varphi(t,X_t)\right]$ where $(X_s)_{s\ge 0}$ solves \eqref{edsnl}. Using \eqref{edpvarphi}, applying Ito's formula and taking expectations, we obtain that:
$$\E\left[\varphi(t,X_t)\right]=\E\left[\varphi(0,X_0)+\int_0^t \nabla \varphi(s,X_s) \cdot b(s,X_s) \,ds\right].$$
By Fubini's Theorem and since $G_t$ is even, we obtain:
\begin{align*}
 \displaystyle \E\left[f(X_t)\right] &= \int_{\R^d} G_t * f(x)m(dx) + \int_{(0,t] \times \R^d} \sum\limits_{i=1}^d \int_{\R^d} \partial_{x_i}G_{t-s}(x-y)f(y)\,dy\, b_i(s,x) \mu_s(dx)\,ds \\
 &= \int_{\R^d} f(x) \bigg(G_t * m(x) - \int_0^t \nabla G_{t-s} * \left(b(s,.)p(s,.)\right)(x) \,ds\bigg) \,dx.
\end{align*}
Since $f$ is arbitrary, we conclude that $X_t$ admits a density that we denote by $p(t,.)$ and that satisfies the mild formulation \eqref{mildf}. \\
Let us establish that $X^h_t$ admits a density $p^{h}(t,.)$ that satisfies a perturbed version of the previous equation. Using similar arguments, we get:
$$\E\left[\displaystyle \varphi\left(t,X^h_t\right)\right] = \E\left[\varphi(0,X_0)+ \int_0^t \nabla \varphi \left(s,X^h_s\right) \cdot b\left(\delta_{\ell_s},X^h_{\tau^h_s}\right)\,ds\right].$$
Once again, by Fubini's Theorem and since $G_t$ is even, we obtain:
\begin{align*}
 \displaystyle \E\left[f\left(X^h_t\right)\right] &= \int_{\R^d} G_t * f(x) m(x)\,dx + \E \left[\int_{(0,t] \times \R^d} \left(\nabla G_{t-s}(X^h_s-x)f(x)\,dx\right) \cdot b\left(\delta_{\ell_s}, X^h_{\tau^h_s}\right)\,ds \right] \\
 &= \int_{\R^d} f(x) \bigg(G_t * m(x) - \int_0^t \E\left[\nabla G_{t-s}(x -X^h_s)\cdot  b\left(\delta_{\ell_s}, X^h_{\tau^h_s}\right) \right] \,ds \bigg) \,dx.
\end{align*}
The function $f$ being arbitrary, we can conclude.
\end{proof}

Now, let us put \eqref{mildfn1} in a form closer to \eqref{mildf} but with an additional perturbation term that we control in the following proposition.

\begin{prop}\label{mildfn2}
  Assume $b:[0,T]\times\R^d \to \R^d$ is measurable and bounded by $B<+\infty$. Then:
  \begin{align*}
    \displaystyle \forall h \in (0,T],\forall k \in \left\llbracket 1, \left\lfloor \frac{T}{h} \right\rfloor \right\rrbracket,\quad &p^h\left(kh,.\right) =  G_{kh}*m - \int_{0}^{kh} \nabla G_{kh-\tau^h_s} * \bigg(b(s,.)\mu^h_{\tau^h_s} \bigg)\,ds + R^h(k,.),\\
    \text{ where } \quad &\left\| R^h(k,.) \right\|_{L^1} \le 2dB^2 \left(1 + \frac{d-1}{\pi}\right) \left(\frac{1}{2} + \ln\left(k\right) \right)h.
  \end{align*}
\end{prop}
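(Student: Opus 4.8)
The plan is to massage the time integral in \eqref{mildfn1} at $t=kh$, namely
\[
  I(k,x):=\int_0^{kh}\E\left[\nabla G_{kh-s}\left(x-X^h_s\right)\cdot b\left(\delta_{\ell_s},X^h_{\tau^h_s}\right)\right]ds,\qquad\text{so that }p^h(kh,x)=G_{kh}*m(x)-I(k,x),
\]
into the announced form. The backbone is the block decomposition: on each interval $[\ell h,(\ell+1)h)$ with $0\le\ell\le k-1$, formula \eqref{conteuler} yields the \emph{exact} identity $X^h_s=X^h_{\ell h}+(W_s-W_{\ell h})+\eta_s$, where $\eta_s:=b(\delta_\ell,X^h_{\ell h})(s-\ell h)$ has all coordinates bounded by $B(s-\ell h)$. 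The increment $W_s-W_{\ell h}$ is a centred Gaussian vector with covariance $(s-\ell h)I_d$ that is independent of the pair $(X^h_{\ell h},\delta_\ell)$, whereas $\eta_s$ and $b(\delta_\ell,X^h_{\ell h})$ are measurable with respect to $\mathcal G_\ell:=\sigma\!\left(X_0,(W_u)_{u\le\ell h},\delta_0,\dots,\delta_\ell\right)$.

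First I would condition on $\mathcal G_\ell$, factor out $b(\delta_\ell,X^h_{\ell h})$, and integrate the Brownian increment out against the heat kernel $G_{s-\ell h}$. Using $\nabla G_{kh-s}*G_{s-\ell h}=\nabla\!\left(G_{kh-s}*G_{s-\ell h}\right)=\nabla G_{kh-\ell h}$ — legitimate since $kh-s>0$ for a.e.\ $s<kh$, hence $kh-\ell h>0$ — this collapses simultaneously the kernel time ($kh-s\to kh-\ell h$) and the spatial argument ($X^h_s\to X^h_{\ell h}$), leaving only the residual shift $\eta_s$:
\[
  \E\!\left[\nabla G_{kh-s}(x-X^h_s)\cdot b(\delta_\ell,X^h_{\ell h})\right]=\E\!\left[\nabla G_{kh-\ell h}\!\left(x-X^h_{\ell h}-\eta_s\right)\cdot b(\delta_\ell,X^h_{\ell h})\right].
\]
Next I would Taylor-expand $\nabla G_{kh-\ell h}$ to first order in $\eta_s$ around $x-X^h_{\ell h}$. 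For the zeroth-order term $\E[\nabla G_{kh-\ell h}(x-X^h_{\ell h})\cdot b(\delta_\ell,X^h_{\ell h})]$, I would condition on the smaller $\sigma$-field $\mathcal G_{\ell-1}$: now $\nabla G_{kh-\ell h}(x-X^h_{\ell h})$ is $\delta_\ell$-free, while $\delta_\ell$ is uniform on $[\ell h,(\ell+1)h)$ and independent of $X^h_{\ell h}$, so $\E[b(\delta_\ell,X^h_{\ell h})\mid\mathcal G_{\ell-1}]=\frac1h\int_{\ell h}^{(\ell+1)h}b(u,X^h_{\ell h})\,du$; integrating over $s\in[\ell h,(\ell+1)h)$ and summing over $\ell=0,\dots,k-1$ reconstitutes exactly $\int_0^{kh}\nabla G_{kh-\tau^h_s}*\!\left(b(s,.)\mu^h_{\tau^h_s}\right)ds$. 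The first-order remainder, collected over the blocks, is $R^h(k,.)$, with $\nabla^2 G$ the spatial Hessian of $G$:
\[
  R^h(k,x)=\int_0^{kh}\E\!\left[\int_0^1\Big(\nabla^2G_{kh-\tau^h_s}\!\left(x-X^h_{\tau^h_s}-\theta\eta_s\right)\eta_s\Big)\cdot b\!\left(\delta_{\ell_s},X^h_{\tau^h_s}\right)d\theta\right]ds.
\]

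It remains to estimate $\|R^h(k,.)\|_{L^1}$. Since the coordinates of $\eta_s$ are bounded by $B(s-\tau^h_s)$, $\|b\|_{L^\infty}\le B$, the $L^1(\R^d)$-norm in $x$ is invariant under the translation by $X^h_{\tau^h_s}+\theta\eta_s$, and Fubini applies, one gets
\[
  \left\|R^h(k,.)\right\|_{L^1}\le B^2\int_0^{kh}(s-\tau^h_s)\sum_{i,j=1}^d\left\|\partial_{x_ix_j}G_{kh-\tau^h_s}\right\|_{L^1}ds.
\]
An elementary Gaussian computation gives, with $Z\sim\mathcal N(0,1)$, $\left\|\partial_{x_ix_i}G_t\right\|_{L^1}=t^{-1}\E|Z^2-1|\le 2/t$ and $\left\|\partial_{x_ix_j}G_t\right\|_{L^1}=\frac{2}{\pi t}$ for $i\ne j$, so that $\sum_{i,j}\left\|\partial_{x_ix_j}G_t\right\|_{L^1}\le\frac{2d}{t}\left(1+\frac{d-1}{\pi}\right)$. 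Since moreover
\[
  \int_0^{kh}\frac{s-\tau^h_s}{kh-\tau^h_s}\,ds=\sum_{\ell=0}^{k-1}\frac{1}{(k-\ell)h}\cdot\frac{h^2}{2}=\frac h2\sum_{m=1}^{k}\frac1m\le\frac h2\left(1+\ln k\right)\le\left(\tfrac12+\ln k\right)h,
\]
we obtain $\|R^h(k,.)\|_{L^1}\le 2dB^2\big(1+\frac{d-1}{\pi}\big)\big(\frac12+\ln k\big)h$, as claimed.

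The step I expect to be the most delicate is the bookkeeping of the two successive conditionings: one must integrate the Brownian increment out while $\delta_\ell$ is still frozen (to invoke the heat-semigroup identity), and only afterwards — on the zeroth-order term, after $\eta_s$ has been peeled off — average over $\delta_\ell$, which is exactly the mechanism by which the randomized time variable turns the $\mathcal O(h)$ error of replacing $b(\delta_{\ell_s},\cdot)$ by $b(s,\cdot)$ into no error at all. The Taylor remainder, the $L^1$-bounds on $\nabla^2G_t$, and the harmonic-sum estimate are all routine.
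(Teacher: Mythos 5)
Your proposal is correct and follows essentially the same route as the paper: block decomposition of the scheme, conditioning plus the semigroup identity $\nabla G_{kh-s}*G_{s-\ell h}=\nabla G_{kh-\ell h}$ to integrate out the Brownian increment, first-order Taylor expansion with integral remainder, the independence of $\delta_\ell$ and $X^h_{\ell h}$ together with the uniform law of $\delta_\ell$ to recover $\int_{\ell h}^{(\ell+1)h}\nabla G_{kh-\ell h}*\big(b(s,.)\mu^h_{\ell h}\big)\,ds$, and the same $L^1$ bounds on the Hessian of the heat kernel with the harmonic-type sum. The only quibble is notational: as you defined it, $\mathcal G_{\ell-1}$ does not contain $(W_u)_{(\ell-1)h<u\le \ell h}$, so $X^h_{\ell h}$ is not $\mathcal G_{\ell-1}$-measurable and your conditional-expectation display should instead condition on $\sigma\big(X_0,(W_u)_{u\le \ell h},\delta_0,\dots,\delta_{\ell-1}\big)$ (or simply invoke, as the paper does, the independence of $\delta_\ell$ from $X^h_{\ell h}$), a one-line fix that does not affect the argument.
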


\begin{proof}
Let $k \in \left\llbracket 1,  \left\lfloor\frac{T}{h} \right\rfloor\right\rrbracket$. By \eqref{mildfn1} written for $t=kh$ and since $X^h_s = X^h_{jh} + \left(W_s-W_{jh}\right) + b\left(\delta_{j}, X^h_{jh}\right)\left(s- jh\right)$ for $s \in \left[jh, (j+1)h\right)$, we have $dx$ a.e.:
\begin{align*}
  \displaystyle &p^h(kh,x) \\
  &= G_{kh}*m(x)- \sum_{j=0}^{k-1} \int_{jh}^{(j+1)h} \E \bigg.\bigg[\E \Big[\nabla G_{kh-s}\left(x-X^h_s\right)\cdot b\left(\delta_{j}, X^h_{jh}\right) \Big\vert X^h_{jh}, \delta_{j}\bigg] \bigg]\,ds \\
  &= G_{kh}*m(x)- \sum_{j=0}^{k-1} \int_{jh}^{(j+1)h}\E \Bigg[ \E \left. \left[\nabla G_{kh-s}\Big(x- X^h_{jh} - \left(W_s - W_{jh} \right) - b\left(\delta_{j}, X^h_{jh}\right)\left(s- jh\right)\Big)\right\vert X^h_{jh}, \delta_{j}  \right]\cdot b\left(\delta_{j}, X^h_{jh}\right)\Bigg] \,ds.  
\end{align*}
Using the independence between the increments $\left(W_s - W_{jh}\right)_{s \ge jh}$ and $\left(X^h_{jh}, \delta_{j}\right)$ as well as the fact that the heat kernel is a convolution semi-group s.t. for $0 \le u < s < t, \; \nabla G_{t-s}*G_{s-u} = \nabla G_{t-u}$, we deduce:
\begin{align*}
  \displaystyle p^h(kh,x)= G_{kh}*m(x)- \sum_{j=0}^{k-1} \int_{jh}^{(j+1)h} \E\Bigg[\nabla G_{kh-jh}\bigg(x - X^h_{jh}- b\left(\delta_{j}, X^h_{jh}\right)\left(s- jh\right)\bigg)\cdot b\left(\delta_{j}, X^h_{jh}\right)\Bigg] \,ds.
\end{align*}
 Using Taylor's formula with integral reminder at first order, we obtain:
\begin{align*}
  &\displaystyle \nabla G_{kh-jh}\bigg(x- X^h_{jh} - b\left(\delta_{j},X^h_{jh}\right)\left(s- jh\right)\bigg) \\
  &= \nabla G_{kh-jh}\bigg(x- X^h_{jh}\bigg) - (s - jh) \int_0^1 \sum_{i=1}^d \partial_{x_i} \nabla G_{kh-jh}\bigg(x-X^h_{jh}-\alpha\, b\left(\delta_{j},X^h_{jh}\right)(s-t_j) \bigg) b_i\left(\delta_{j},X^h_{jh}\right)\,d\alpha.
\end{align*} 
 We plug this equality in the previous equation. We can easily see by induction using Equation \eqref{euler} that $\delta_{j}$ is independent from $X^h_{jh}$ for $j  \le \left\lfloor\frac{T}{h}\right\rfloor$. Therefore, we obtain:
\begin{align*}
  \displaystyle &p^h(kh,x) - G_{kh}*m(x) + \sum_{j=0}^{k-1} \int_{jh}^{(j+1)h} \nabla G_{kh-jh} * \Big(b(s,x)\mu^h_{jh}(dx)\Big) \,ds \\
  &= \sum_{j=0}^{k-1} \int_{jh}^{(j+1)h}(s-jh) \E\left[\int_0^1 \sum_{i=1}^d \sum_{l=1}^d \frac{\partial^2 }{\partial x_i x_l} G_{kh-jh}\Big(x-X^h_{jh}-\alpha\, b\left(\delta_{j},X^h_{jh}\right)(s-jh) \Big)b_i\left(\delta_{j},X^h_{jh}\right)b_l\left(\delta_{j},X^h_{jh}\right)\,d\alpha \right] \,ds.
\end{align*}
 We denote by $R^h(k,x)$ the right-hand side of the previous equation. To upper-bound the $L^1$-norm of $R^h$, we use the estimates \eqref{SecondDerivG1} and \eqref{SecondDerivG2} from Lemma \ref{EstimHeatEq}, and the boundedness of $b$ to obtain:
\begin{align*}
  \displaystyle \left\|R^h(k,.)\right\|_{L^1} &\le \int_0^{kh} (s-\tau^h_s) \sum_{i=1}^d \sum_{l=1}^d \left\|\frac{\partial^2 }{\partial x_i x_l} G_{kh-\tau^h_s} \right\|_{L^1}\left\|b_i \right\|_{L^{\infty}}\left\|b_l \right\|_{L^{\infty}} \,ds \\
  &\le 2dB^2 \left(1 + \frac{d-1}{\pi}\right) \int_0^{kh} \frac{s-\tau^h_s}{kh - \tau^h_s}\,ds \\
  &\le 2dB^2 \left(1 + \frac{d-1}{\pi}\right) \left( \int_0^{(k-1)h} \frac{h}{kh - s}\,ds + \int_{(k-1)h}^{kh} \frac{s-(k-1)h}{h}\,ds \right) \\
  &= 2dB^2 \left(1 + \frac{d-1}{\pi}\right) \left(-h\ln\left(h\right) + h \ln(kh) + \frac{h}{2} \right).
\end{align*} 
One can easily conclude. 
\end{proof}


\section{Proof of the convergence rate in total variation}\label{preuve}

To prove Theorem \ref{cvgence}, we need the following Lemma that gives an estimation of the regularity of $p(t,.)$ with respect to the time variable. 
\begin{lem}\label{evoldens}
  Assume $b:[0,T]\times\R^d \to \R^d$ is measurable and bounded by $B<+\infty$.
  $$\displaystyle \exists \, Q<+\infty, \forall \, 0 < r\le s\le T,\quad \left\|p(s,.) - p(r,.) \right\|_{L^1} \le Q\bigg(\ln(s/r)+\sqrt{s-r}\bigg).$$
\end{lem}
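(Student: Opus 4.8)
The plan is to work from the mild equation \eqref{mildf}, subtract it at the two times, and estimate each resulting term in $L^1$. Write $f(u,\cdot):=b(u,\cdot)p(u,\cdot)$; since $\|b(u,\cdot)\|_{L^\infty}\le B$ and $p(u,\cdot)$ is a probability density, each component satisfies $\|f_i(u,\cdot)\|_{L^1}\le B$. Subtracting \eqref{mildf} at times $s$ and $r$ and splitting the drift integral at $r$, I would write, $dx$-a.e.,
\[
p(s,\cdot)-p(r,\cdot)=(G_s-G_r)*m-\int_r^s\nabla G_{s-u}*f(u,\cdot)\,du-\int_0^r\big(\nabla G_{s-u}-\nabla G_{r-u}\big)*f(u,\cdot)\,du,
\]
so that it suffices to bound the $L^1$-norms of these three pieces. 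The first, coming from the heat semigroup applied to the possibly singular initial law $m$, is the one that produces the logarithm; the two drift pieces will be H\"older-$1/2$ in time.

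For the first piece, since $m$ is a probability measure, Young's inequality gives $\|(G_s-G_r)*m\|_{L^1}\le\|G_s-G_r\|_{L^1}$; then $G_s-G_r=\tfrac12\int_r^s\Delta G_t\,dt$ together with the scaling identity $\|\Delta G_t\|_{L^1}=t^{-1}\|\Delta G_1\|_{L^1}$ yields $\|G_s-G_r\|_{L^1}\le\tfrac12\|\Delta G_1\|_{L^1}\ln(s/r)$. For the second piece I would use $\|\nabla G_t\|_{L^1}\le c_d\,t^{-1/2}$ (each $\|\partial_{x_i}G_t\|_{L^1}=\sqrt{2/(\pi t)}$) to get $\int_r^s\|\nabla G_{s-u}*f(u,\cdot)\|_{L^1}\,du\le c_dB\int_r^s(s-u)^{-1/2}\,du=2c_dB\sqrt{s-r}$. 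For the third piece, write $\nabla G_{s-u}-\nabla G_{r-u}=\int_{r-u}^{s-u}\partial_t\nabla G_t\,dt$ and use the scaling bound $\|\partial_t\nabla G_t\|_{L^1}=\tfrac12\|\nabla\Delta G_t\|_{L^1}\le c'_d\,t^{-3/2}$; then by Fubini,
\[
\int_0^r\big\|\big(\nabla G_{s-u}-\nabla G_{r-u}\big)*f(u,\cdot)\big\|_{L^1}\,du\le c'_dB\int_0^r\!\!\int_{r-u}^{s-u}t^{-3/2}\,dt\,du=4c'_dB\big(\sqrt r+\sqrt{s-r}-\sqrt s\big)\le 4c'_dB\sqrt{s-r},
\]
the last inequality because $\sqrt s\ge\sqrt r$. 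Summing the three bounds gives the claim with a constant $Q$ depending only on $d$ and $B$.

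I expect no real obstacle: the argument is essentially careful bookkeeping. The only points requiring attention are the $L^1$ estimates on the first, second and third spatial derivatives of the heat kernel, all of which follow from the explicit Gaussian form by scaling (the second-order ones being essentially the content of Lemma \ref{EstimHeatEq}); and the conceptual observation that it is precisely the term $(G_s-G_r)*m$ — already nontrivial when $m$ is a Dirac mass — that prevents a pure $\sqrt{s-r}$ modulus of continuity and is responsible for the $\ln(s/r)$ appearing in the statement.
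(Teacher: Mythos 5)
Your proposal is correct and takes essentially the same route as the paper's proof: the same three-term decomposition of the mild equation \eqref{mildf}, the same $L^1$ heat-kernel bounds ($\|\partial_t G_t\|_{L^1}\lesssim t^{-1}$ for the logarithmic initial-law term, $\|\nabla G_t\|_{L^1}\lesssim t^{-1/2}$ for the piece on $[r,s]$, and $\|\partial_t\nabla G_t\|_{L^1}=\tfrac12\|\nabla\Delta G_t\|_{L^1}\lesssim t^{-3/2}$ for the piece on $[0,r]$), and the same final integrations yielding $\ln(s/r)$ and $\sqrt{s-r}$. The only cosmetic difference is that you obtain the constants by Gaussian scaling rather than the explicit moment computations of Lemma \ref{EstimHeatEq}.
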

\begin{proof}
Let $0 < r \le s \le T$. We have:
\begin{align*}
  \displaystyle p(s,.) - p(r,.) = \left( G_s - G_r \right) * m - \int_0^s \nabla G_{s-u} *\Big(b(u,.)p(u,.)\Big)\,du + \int_0^r \nabla G_{r-u} *\Big(b(u,.)p(u,.)\Big)\,du.
\end{align*}
Using the estimates \eqref{timeDerivG}, \eqref{FirstDerivG} and \eqref{ThirdDerivG} from Lemma \ref{EstimHeatEq}, we obtain:
\begin{align*}
  \displaystyle &\left\|p(s,.) - p(r,.) \right\|_{L^1} \\
  &\le \left\|\left( G_s - G_r \right) * m \right\|_{L^1} + \left\|\int_0^r \left(\nabla G_{s-u} - \nabla G_{r-u} \right)*\Big(b(u,.)p(u,.)\Big)\,du \right\|_{L^1} + \left\|\int_r^s \nabla G_{s-u} *\Big(b(u,.)p(u,.)\Big)\,du \right\|_{L^1} \\
  &\le \left\|\int_r^s \left(\partial_u G_u* m\right)\,du \right\|_{L^1} + \left\|\int_0^r \int_{r-u}^{s-u} \partial_{\theta} \nabla G_{\theta} *\Big(b(u,.)p(u,.)\Big)\,d\theta\,du\right\|_{L^1}+ dB\int_r^s \sum\limits_{i=1}^d \left\| \partial_{x_i} G_{s-u}\right\|_{L^1} \,du \\
  &\le \int_r^s \left\|\partial_u G_u \right\|_{L^1} \,du + \frac{B}{2} \int_0^r \int_{r-u}^{s-u} \sum\limits_{i=1}^d \sum\limits_{j=1}^d \left\|  \frac{\partial^3G_{\theta}}{ \partial x_i\partial x_j^2} \right\|_{L^1}d\theta \,du  + 2\sqrt{\frac{2}{\pi}}dB  \sqrt{s-r} \\
  &\le d\ln(s/r) + 2\sqrt{\frac{2}{\pi}} (2d+3)dB  \Big(\sqrt{s-r} - \left(\sqrt{s} - \sqrt{r}\right)\Big)+ 2\sqrt{\frac{2}{\pi}}dB \sqrt{s-r}.
\end{align*}
  The conclusion holds with $\displaystyle Q = d\max\left(1,4\sqrt{\frac{2}{\pi}}(d+2)B\right)$.
\end{proof}
We are now ready to prove Theorem \ref{cvgence}. Since $\mu_0 = \mu_0^h = m$, using Equality \eqref{caract} and Proposition \ref{propspde}, to prove the theorem amounts to prove that:
\begin{align*}
  \exists \, C<+\infty, \forall h \in (0,T],\; \forall k \in \left\llbracket 1, \left \lfloor \frac{T}{h} \right \rfloor\right\rrbracket, \quad \left\|p^h\left(kh,.\right)-p\left(kh,.\right)\right\|_{L^1} \le C \sqrt h.
\end{align*}
Let $k  \in \left\llbracket 1, \left \lfloor \frac{T}{h} \right \rfloor \right\rrbracket$, we have: 
\begin{align*}
  \displaystyle p^h\left(kh,.\right) - p\left(kh,.\right) =   V^h(k,.) + R^h(k,.) - \int_0^{h} \bigg(\nabla G_{kh-s} * \Big(b(s,.)p(s,.)\Big) - \nabla G_{kh} * \Big(b(s,.)m\Big)\bigg)\,ds
\end{align*}
where $R^h(k,.)$ is defined in Proposition \ref{mildfn2} and $V^h(k,.)$ is defined by: 
$$\displaystyle V^h(k,.) = \int_{h}^{kh}\bigg(\nabla G_{kh-s} * \Big(b(s,.)p(s,.) \Big)-\nabla G_{kh-\tau^h_s} * \Big(b(s,.)p^h\left(\tau^h_s,.\right)\Big)\bigg)\,ds.$$ 
Since $V^h(1,.) = 0$, we suppose $k \ge 2$ and express $V^h(k,.)$ as $V^h(k,.)= \sum\limits_{p=1}^{3}V^h_p(k,.)$ where:
\begin{align*}
  V_1^h(k,.) &= \int_{h}^{kh}\bigg(\nabla G_{kh-s}- \nabla G_{kh-\tau^h_s} \bigg)* \Big(b(s,.)p(s,.) \Big)\,ds,\\
  V_2^h(k,.) &= \int_{h}^{kh}\nabla G_{kh-\tau^h_s} * \bigg(b(s,.)\Big[p(s,.)-p\left(\tau^h_s,.\right) \Big]\ \bigg)\,ds,\\
  V_3^h(k,.) &= \int_{h}^{kh}\nabla G_{kh-\tau^h_s} * \bigg(b(s,.)\Big[p(\tau^h_s,.)-p^h\left(\tau^h_s,.\right) \Big]\ \bigg)\,ds.
\end{align*}
On the one hand, using the estimate \eqref{FirstDerivG} from Lemma \ref{EstimHeatEq}, we obtain immediately that:
\begin{align}\label{v3}
  \displaystyle \left\|V_3^h(k,.)\right\|_{L^1} &\le \sqrt{\frac 2 \pi} dB\int_{h}^{kh}\frac{1}{\sqrt{kh-\tau^h_s}}\left\|p(\tau^h_s,.) - p^h(\tau^h_s,.) \right\|_{L^1}\,ds = \sqrt{\frac 2 \pi}dB\sum_{j=1}^{k-1} \frac{h}{\sqrt{kh - jh}}\left\|p(jh,.) - p^h(jh,.) \right\|_{L^1}.
\end{align}
On the other hand, using the estimate \eqref{FirstDerivG} from Lemma \ref{EstimHeatEq}, we have for the first time-step that:

\begin{align}\label{firstStep}
  \left\| \int_0^{h} \bigg(\nabla G_{kh-s} * \Big(b(s,.)p(s,.)\Big) - \nabla G_{kh} * \Big(b(s,.)m\Big)\bigg)\,ds \right\|_{L^1} &\le B \int_0^h \sum_{i=1}^d \Big(\left\|\partial_{x_i} G_{kh-s} \right\|_{L^1} + \left\|\partial_{x_i} G_{kh} \right\|_{L^1} \Big)\,ds \nonumber \\
  &= \sqrt{\frac 2 \pi}dB \left( 2\left(\sqrt{kh} - \sqrt{(k-1)h}\right)+\frac{h}{\sqrt{kh}}\right)\nonumber\\
  &\le \sqrt{\frac 2 \pi}dB\frac{3h}{\sqrt{kh}}.
\end{align}

Now, using Inequality \eqref{v3}  and Proposition \ref{mildfn2} for the first inequality, Inequality \eqref{firstStep} for the second inequality and finally the fact that $\ln(k) \le \ln\left(\frac T h\right)$ with $\sup_{x >0}\left\{ \left(\frac{1}{2}+\ln\left(\frac{T}{x}\right) \right)\sqrt{x} \right\}$ is attained for $x = T e^{-\frac 3 2}$ for the third inequality, we obtain: 

\begin{align}
  \displaystyle &\left\|p(kh,.)-p^h\left(kh,.\right)\right\|_{L^1} \nonumber\\
  &\le \left\|V^h_1(k,.)\right\|_{L^1}+ \left\|V^h_2(k,.)\right\|_{L^1}+ \sqrt{\frac 2 \pi}dB\sum_{j=1}^{k-1} \frac{h}{\sqrt{kh - jh}}\left\|p(jh,.) - p^h(jh,.) \right\|_{L^1} \nonumber \\
  &\phantom{\|V}+ 2dB^2 \left(1 + \frac{d-1}{\pi}\right) \left(\frac{1}{2} + \ln\left(k\right) \right)h + \left\| \int_0^{h} \bigg(\nabla G_{kh-s} * \Big(b(s,.)p(s,.)\Big) - \nabla G_{kh} * \Big(b(s,.)m\Big)\bigg)\,ds \right\|_{L^1} \label{firstintegB} \\
  &\le \left\|V^h_1(k,.)\right\|_{L^1}+ \left\|V^h_2(k,.)\right\|_{L^1}+ \sqrt{\frac 2 \pi}dB\sum_{j=1}^{k-1} \frac{h}{\sqrt{kh - jh}}\left\|p(jh,.) - p^h(jh,.) \right\|_{L^1}  \nonumber \\
  &\phantom{\|V\|}+ 2dB^2 \left(1 + \frac{d-1}{\pi}\right) \left(\frac{1}{2} + \ln\left(k \right) \right)h + \sqrt{\frac 2 \pi}dB\frac{3h}{\sqrt{kh}} \label{firstinteg} \\
  &\le \left\|V^h_1(k,.)\right\|_{L^1}+ \left\|V^h_2(k,.)\right\|_{L^1}+ \sqrt{\frac 2 \pi}dB \sum_{j=1}^{k-1} \frac{h}{\sqrt{kh - jh}}\left\|p(jh,.) - p^h(jh,.) \right\|_{L^1} \nonumber \\
  &\phantom{\|V\|}+ \sqrt{\frac 2 \pi}dB\left(2\sqrt{\frac{2T}{\pi}}B(\pi + d-1)e^{-\frac{3}{4}} + 3 \right) \sqrt h \label{ici1}.
\end{align}

Let us now estimate $\left\|V^h_1(k,.)\right\|_{L^1}$ and $\left\|V^h_2(k,.)\right\|_{L^1}$ for $k \ge 2$.\\ \\
$\bullet$ For $p=1$, using the estimates \eqref{FirstDerivG} and \eqref{ThirdDerivG} from Lemma \ref{EstimHeatEq}, we obtain:
\begin{align*}
\displaystyle \left\|V_1^h(k,.)\right\|_{L^1} &\le \frac{B}{2} \int_{h}^{(k-1)h} \int_{kh - s}^{kh - \tau^h_s}\sum_{i=1}^d \sum_{j=1}^d\left\|\frac{\partial^3 G_r}{\partial x_i \partial x^2_j}\right\|_{L^1}dr\,ds + B\int_{(k-1)h}^{kh}\bigg(\left\|\nabla G_{kh-s} \right\|_{L^1} + \left\|\nabla G_{h} \right\|_{L^1}\bigg)\,ds\\
&\le \frac{1}{2}\sqrt{\frac{2}{\pi}}(2d+3)dB\int_{h}^{(k-1)h} \int_{kh - s}^{kh - \tau^h_s}\frac{dr}{r^{3/2}}\,ds + dB\sqrt{\frac{2}{\pi}}\int_{(k-1)h}^{kh}\bigg(\frac{1}{\sqrt{kh-s}} + \frac{1}{\sqrt{ h}} \bigg)\,ds\\
&\le \sqrt{\frac{2}{\pi}}(2d+3)dB \int_{h}^{(k-1)h} \frac{h}{2\left(kh-s\right)^{3/2}}\,ds + 3dB\sqrt{\frac{2}{\pi}}\sqrt{h} \le 2\sqrt{\frac{2}{\pi}}(d+3)dB\sqrt{h}.
\end{align*}
$\bullet$ For $p=2$, using the estimate \eqref{FirstDerivG} from Lemma \ref{EstimHeatEq} and Lemma \ref{evoldens}, we obtain:
\begin{align*}
\displaystyle \left\|V_2^h(k,.)\right\|_{L^1} &\le \sqrt{\frac 2 \pi}dB\int_{h}^{kh}\frac{1}{\sqrt{kh-\tau^h_s}} \left\|p(s,.) - p(\tau^h_s,.) \right\|_{L^1}\,ds \\
&\le \sqrt{\frac 2 \pi}dBQ\Bigg( \int_{h}^{kh} \frac{\ln(s/ \tau^h_s) }{\sqrt{kh-\tau^h_s}}\,ds + \int_{h}^{kh} \frac{\sqrt{s-\tau^h_s}}{\sqrt{kh-\tau^h_s}}\,ds \Bigg).
\end{align*}
Using the fact that for $\displaystyle s \ge h,\; \ln\left(\frac{s}{\tau^h_s}\right) \le \frac{s-\tau^h_s}{\tau^h_s} \le \frac{2h}{s}$ and Lemma \ref{integ}, we have that:
\begin{align}\label{integLog}
\displaystyle \int_{h}^{kh} \frac{\ln(s/ \tau^h_s) }{\sqrt{kh-\tau^h_s}}\,ds &\le \int_{h}^{kh}\frac{2h}{s\sqrt{kh-s}}\,ds \le \frac{2h}{\sqrt{kh}} \ln\left(4k\right).
\end{align}
Moreover, using the fact that $\displaystyle \frac{\sqrt{s-\tau^h_s}}{\sqrt{kh - \tau^h_s}} \le \frac{\sqrt h}{\sqrt{kh -s}}$, we deduce that:
\begin{align*}
\displaystyle \left\|V_2^h(k,.)\right\|_{L^1} &\le 2\sqrt{\frac 2 \pi}dBQ\Bigg(\sup_{x\ge 1}\frac{\ln(4x)}{\sqrt x} + \sqrt{(k-1)h}\Bigg) \sqrt h \le 2\sqrt{\frac 2 \pi}dBQ\Bigg(\frac{4}{e} +\sqrt T\Bigg) \sqrt h.
\end{align*}
Hence, using \eqref{ici1} and the above estimates for $\left\|V_1^h(k,.)\right\|_{L^1}$ and $\left\|V_2^h(k,.)\right\|_{L^1}$, we obtain:
\begin{align*}
  \displaystyle \left\|p(kh,.) - p^h(kh,.) \right\|_{L^1} \le L \sqrt{h} + \sqrt{\frac{2}{\pi}}dB \sum_{j=1}^{k-1} \frac{h}{\sqrt{kh - jh}}\left\|p(jh,.) - p^h(jh,.) \right\|_{L^1}
\end{align*}
where $\displaystyle L = 2\sqrt{\frac 2 \pi}dB\Bigg(\left(d + \frac 9 2 \right)+ Q\left(\frac{4}{e} + \sqrt T \right)+ \sqrt{\frac{2T}{\pi}}B(\pi + d-1)e^{-\frac 3 4} \Bigg)$. We iterate this inequality to obtain:
\begin{align*}
  \displaystyle \left\|p(kh,.) - p^h(kh,.) \right\|_{L^1} \le L\left(1 + 2\sqrt{\frac{2}{\pi}}dB \sqrt{(k-1)h} \right)\sqrt{h}+ \frac{2d^2B^2}{\pi} \sum_{j=1}^{k-1} \sum_{l=1}^{j-1}\frac{h}{\sqrt{k-j}\sqrt{j-l}}\left\|p(lh,.) - p^h(lh,.) \right\|_{L^1}.
\end{align*}
We re-write the double-sum the following way:
\begin{align*}
  \displaystyle \sum_{j=1}^{k-1} \sum_{l=1}^{j-1}\frac{h}{\sqrt{k-j}\sqrt{j-l}}\left\|p(lh,.) - p^h(lh,.) \right\|_{L^1} &= \sum_{l=1}^{k-2} \sum_{j=l+1}^{k-1}\frac{h}{\sqrt{k-j}\sqrt{j-l}}\left\|p(lh,.) - p^h(lh,.) \right\|_{L^1} \\
  &= h \sum_{l=1}^{k-2} \Bigg(\left\|p(lh,.) - p^h(lh,.) \right\|_{L^1} \sum_{i=1}^{k-l-1} \frac{1}{\sqrt{i}\sqrt{(k-l) - i} }\Bigg)\\
  &\le \pi \int_{h}^{kh}\left\|p\left(\tau^h_s,.\right) - p^h\left(\tau^h_s,.\right) \right\|_{L^1} \,ds,
\end{align*}
where we used Lemma \ref{sumPi} for the last inequality. Therefore, 
\begin{align*}
\displaystyle \left\|p(kh,.) - p^h(kh,.) \right\|_{L^1} \le L\left(1 + 2\sqrt{\frac{2T}{\pi}}dB \right)\sqrt{h} + 2d^2B^2 h \sum_{j=1}^{k-1}\left\|p\left(jh,.\right) - p^h\left(jh,.\right) \right\|_{L^1}.
\end{align*}
We apply Lemma \ref{Gronw} and obtain:
\begin{align*}
\displaystyle \left\|p(kh,.) - p^h(kh,.) \right\|_{L^1} &\le L\left(1 + 2\sqrt{\frac{2T}{\pi}}dB \right)\sqrt{h} + 2d^2B^2 L\left(1 + 2\sqrt{\frac{2T}{\pi}}dB \right)\sqrt{h} \sum_{j=1}^{k-1} h \exp\Big(2d^2B^2(kh -(j+1)h) \Big) \\
&\le L\left(1 + 2\sqrt{\frac{2T}{\pi}}dB \right)\sqrt{h} + 2d^2B^2 L\left(1 + 2\sqrt{\frac{2T}{\pi}}dB \right)\sqrt{h}\exp\Big(2d^2B^2T\Big)(kh-h)\\
&\le  L\left(1 + 2\sqrt{\frac{2T}{\pi}}dB \right)\left(1 + 2d^2B^2T\exp\Big(2d^2B^2T\Big) \right)\sqrt{h}.
\end{align*}
The conclusion holds with $\displaystyle C =  L\left(1 + 2\sqrt{\frac{2T}{\pi}}dB \right)\left(1 + 2d^2B^2T\exp\Big(2d^2B^2T\Big) \right)$.


\section{Proof of the convergence rate in total variation when assuming more regularity on $b$ w.r.t. to the space variables}\label{preuveB}

The following proposition, developed in Subsection \ref{hypH}, enables to establish an estimate of the total variation norm of the divergence of $b(t,.)p(t,.)$ for $t>0$ from the regularity assumed on $b$ w.r.t. to the space variables. When assuming extra regularity on $m$, the estimate is improved.
\begin{prop}\label{hyp}
  Assume $b:[0,T]\times\R^d \to \R^d$ is measurable and bounded by $B<+\infty$. If $\,\sup_{t \in [0,T]} \|\nabla \cdot b(t,.)\|_{L^\rho}<+\infty$ for some $\rho \in [d,+\infty]$ or for $d=1$, $\sup_{t \in [0,T]}\left\|\partial_x b(t,.)\right\|_{\rm TV}<+\infty$; where $\nabla \cdot b(t,.)$ and $\partial_x b(t,.)$ are respectively the spatial divergence and the spatial derivative of $b$ in the sense of distributions. Then:
  \begin{align}\label{hyp1}
    \displaystyle \exists\, M <+\infty, \forall t \in(0,T], \quad \left\|\nabla \cdot \Big(b(t,.)p(t,.) \Big) \right\|_{\rm TV} \le \frac{M}{\sqrt t},
  \end{align}
  and:
  \begin{align}\label{nablaP}
    \forall t \in (0,T], \quad p(t,.) = G_t*m - \int_0^t G_{t-s} * \nabla \cdot \Big(b(s,.)p(s,.)\Big)\,ds.
  \end{align}
  Moreover, if $m$ admits a density w.r.t. the Lebesgue measure that belongs to $W^{1,1}\left(\R^d\right)$, we obtain:
  \begin{align}\label{hyp2}
    \displaystyle \exists \, \tilde{M} <+\infty, \forall t \in [0,T], \quad \left\|\nabla \cdot \Big(b(t,.)p(t,.) \Big) \right\|_{\rm TV} \le \tilde{M}.
  \end{align}
\end{prop}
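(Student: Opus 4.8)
The plan is to prove the three estimates first for a smooth bounded approximation of $b$ and then pass to the limit. Mollify $b$ in the space variable, $b^n(t,\cdot)=b(t,\cdot)*\eta_n$ for a standard mollifier $\eta_n$: then $\|b^n\|_\infty\le B$, $\nabla\cdot b^n(t,\cdot)=(\nabla\cdot b)(t,\cdot)*\eta_n$, so $\|\nabla\cdot b^n(t,\cdot)\|_{L^\rho}\le\|\nabla\cdot b(t,\cdot)\|_{L^\rho}$ (resp., when $d=1$, $\|\partial_x b^n(t,\cdot)\|_{L^1}\le\|\partial_x b(t,\cdot)\|_{\rm TV}$), and $b^n\to b$ a.e.\ and in $L^1_{\rm loc}$. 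Since $b^n$ is smooth and bounded, the SDE \eqref{edsnl} with drift $b^n$ has a smooth transition density; writing $p^n(t,\cdot)$ for the density of the corresponding $X^n_t$ (with $X^n_0\sim m$), classical parabolic theory gives $p^n\in\mathcal{C}^\infty((0,T]\times\R^d)$ solving $\partial_t p^n=\frac12\Delta p^n-\nabla\cdot(b^np^n)$, together with Gaussian upper bounds whose constants depend only on $d$ and $B$ (via Girsanov or Aronson's estimate). In particular $\|p^n(t,\cdot)\|_{L^q}\le C_q\,t^{-\frac d2(1-\frac1q)}$ for $q\in[1,\infty]$, and both mild formulations hold: $p^n(t)=G_t*m-\int_0^t\nabla G_{t-s}*(b^np^n)(s)\,ds=G_t*m-\int_0^t G_{t-s}*\nabla\cdot(b^np^n)(s)\,ds$, the second being Duhamel's formula for the Fokker--Planck equation and the first its integration by parts.

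The core step is a self-improving estimate for $\Phi^n(t):=\|\nabla\cdot(b^np^n)(t,\cdot)\|_{L^1}=\|(\nabla\cdot b^n)p^n(t,\cdot)+b^n\cdot\nabla p^n(t,\cdot)\|_{L^1}$ (a legitimate decomposition, $p^n(t,\cdot)$ being smooth). By Hölder, $\|(\nabla\cdot b^n)p^n(t)\|_{L^1}\le\|\nabla\cdot b^n(t)\|_{L^\rho}\|p^n(t)\|_{L^{\rho/(\rho-1)}}\le C\,t^{-d/(2\rho)}\le C'/\sqrt t$ on $(0,T]$ since $\rho\ge d$ (in the one-dimensional $BV$ case, $\|(\partial_x b^n)p^n(t)\|_{L^1}\le\|\partial_x b^n(t)\|_{L^1}\|p^n(t)\|_\infty\le C/\sqrt t$). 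Next $\|b^n\cdot\nabla p^n(t)\|_{L^1}\le B\|\nabla p^n(t)\|_{L^1}$, and differentiating the second mild formula gives $\nabla p^n(t)=\nabla G_t*m-\int_0^t\nabla G_{t-s}*\nabla\cdot(b^np^n)(s)\,ds$, whence, by the heat-kernel estimates of Lemma \ref{EstimHeatEq}, $\|\nabla p^n(t)\|_{L^1}\le c/\sqrt t+c\int_0^t\Phi^n(s)/\sqrt{t-s}\,ds$. Combining, there are constants $\kappa_0,\kappa_1$ depending only on $d$, $B$ and $\sup_t\|\nabla\cdot b(t,\cdot)\|_{L^\rho}$ (resp.\ $\sup_t\|\partial_x b(t,\cdot)\|_{\rm TV}$) with $\Phi^n(t)\le\kappa_0/\sqrt t+\kappa_1\int_0^t\Phi^n(s)/\sqrt{t-s}\,ds$. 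Since smooth parabolic regularity already gives $\Phi^n\in L^1(0,T)$ (with a possibly $n$-dependent bound), iterating this weakly singular Volterra inequality once — using $\int_0^t ds/(\sqrt{t-s}\sqrt s)=\pi$ and Fubini — turns it into $\Phi^n(t)\le\kappa_0/\sqrt t+\kappa_0\kappa_1\pi+\kappa_1^2\pi\int_0^t\Phi^n(s)\,ds$, and the classical Gronwall lemma yields $\Phi^n(t)\le M/\sqrt t$ on $(0,T]$ with $M$ independent of $n$; feeding this back also gives $\sup_n\|\nabla p^n(t,\cdot)\|_{L^1}\le M'/\sqrt t$.

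It remains to let $n\to\infty$. Comparing the mild equation \eqref{mildf} for $p$ with its analogue for $p^n$ and running a singular Gronwall argument as in the proof of Theorem \ref{cvgence}, using that $\int_{\R^d}|b^n-b|(s,x)p^n(s,x)\,dx\to0$ for each $s$ (dominated convergence via the Gaussian bound), one gets $\|p^n(t,\cdot)-p(t,\cdot)\|_{L^1}\to0$ for every $t\in(0,T]$, hence $b^np^n\to bp$ in $L^1([0,T]\times\R^d)$. Since $\sup_n\|\nabla\cdot(b^np^n)(t,\cdot)\|_{\rm TV}\le M/\sqrt t$, up to a subsequence $\nabla\cdot(b^np^n)(t,\cdot)$ converges weakly-$*$ to a finite signed measure which, tested against $\varphi\in\mathcal{C}^\infty_c(\R^d)$, equals $\nabla\cdot(b(t,\cdot)p(t,\cdot))$ (the pairing being $-\int b^np^n\cdot\nabla\varphi\to-\int bp\cdot\nabla\varphi$); lower semicontinuity of the total variation gives \eqref{hyp1}. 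Passing to the limit in the Duhamel formula for $p^n$ (dominated convergence in $s$, the uniform bound $M/\sqrt s$ being in $L^1(0,t)$) yields \eqref{nablaP}, and differentiating it shows $\nabla p(t,\cdot)=\nabla G_t*m-\int_0^t\nabla G_{t-s}*\nabla\cdot(b(s,\cdot)p(s,\cdot))\,ds\in L^1(\R^d)$ (each convolution being $L^1$ by Lemma \ref{EstimHeatEq} and finiteness of the measures), so $p(t,\cdot)\in W^{1,1}(\R^d)$ and the product rule $\nabla\cdot(bp)=(\nabla\cdot b)p+b\cdot\nabla p$ is justified in the limit. Finally, for \eqref{hyp2}, when $m$ has density $\bar m\in W^{1,1}(\R^d)$, the embedding $W^{1,1}\hookrightarrow L^{d/(d-1)}$ gives $\bar m\in L^{\rho/(\rho-1)}$, whence $\|p^n(t,\cdot)\|_{L^{\rho/(\rho-1)}}\le C\|\bar m\|_{L^{\rho/(\rho-1)}}$ uniformly in $t$, while $\|\nabla G_t*m\|_{L^1}=\|G_t*\nabla\bar m\|_{L^1}\le\|\nabla\bar m\|_{L^1}$; the bootstrap above then runs with $\kappa_0/\sqrt t$ replaced by a constant, giving $\Phi^n(t)\le\tilde M$ uniformly in $n$ and $t$, hence \eqref{hyp2} in the limit.

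The main obstacle is the self-improving estimate: one must close a weakly singular Gronwall inequality in which the unknown $\Phi^n$ simultaneously controls $\|\nabla p^n\|_{L^1}$, which requires a preliminary (possibly $n$-dependent) $L^1(0,T)$ bound on $\Phi^n$ to start the iteration and careful bookkeeping to ensure the final constant depends only on $d$, $B$ and the assumed norm of the spatial divergence of $b$. A secondary difficulty is identifying the weak-$*$ limit of the measures $\nabla\cdot(b^np^n)(t,\cdot)$ as $\nabla\cdot(b(t,\cdot)p(t,\cdot))$, which is what legitimises the product-rule decomposition and the regularity $p(t,\cdot)\in W^{1,1}$ in the limit.
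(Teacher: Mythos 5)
Your proposal is essentially correct but follows a genuinely different route from the paper. The paper never regularizes the drift: it proves directly that the limiting density satisfies $p\in\mathcal{\tilde C}\left((0,T],W^{1,1}(\R^d)\right)$ (resp. $\mathcal{C}\left([0,T],W^{1,1}(\R^d)\right)$ when $m\in W^{1,1}$) by a Picard fixed-point argument for the mild equation in the weighted space with norm $\sup_t\|q(t,.)\|_{L^1}+\sup_t\sqrt{t}\|\nabla q(t,.)\|_{L^1}$ (Lemma \ref{uniqFP}, Proposition \ref{regFixpoint}, Theorem \ref{sobolP}), and then obtains \eqref{hyp1}--\eqref{hyp2} and \eqref{nablaP} in one line from the product-rule Lemma \ref{Sobol}, whose key ingredient is the embedding $W^{1,1}\hookrightarrow L^{\rho/(\rho-1)}$ (valid since $\rho\ge d$). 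You instead mollify $b$ in space, run a weakly singular Volterra/Gronwall bootstrap on $\Phi^n(t)=\|\nabla\cdot(b^np^n)(t,.)\|_{L^1}$, and pass to the limit using $L^1$-stability of the mild equation and lower semicontinuity of the total variation; the role of the Sobolev embedding is played by the Gaussian bound $\|p^n(t)\|_{L^{\rho/(\rho-1)}}\lesssim t^{-d/(2\rho)}$, where $\rho\ge d$ enters in the same way. Your scheme buys a proof that does not require identifying $p$ itself as a fixed point in a $W^{1,1}$-type space, and your limiting argument (testing $-\int b^np^n\cdot\nabla\varphi\to-\int bp\cdot\nabla\varphi$ and using semicontinuity) is sound, as is the singular Gronwall closure once a preliminary locally integrable bound on $\Phi^n$ is available.

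The one soft spot is precisely that preliminary input. Since you mollify only in space, $b^n(t,x)$ is still merely measurable in $t$, so the assertion that ``classical parabolic theory gives $p^n\in\mathcal{C}^\infty((0,T]\times\R^d)$'' is not correct as stated, and the Aronson/Girsanov-type Gaussian and gradient bounds you invoke for $p^n$ (with constants depending only on $d$, $B$, $T$) are genuine external inputs that the paper's proof avoids; they do hold for bounded drift measurable in time and smooth in space (e.g.\ via the parametrix series, whose construction only integrates in time), but they need to be sourced or proved, and the $n$-dependent starting bound $\Phi^n\in L^1(0,T)$ is most naturally obtained by exactly the kind of small-time fixed point in $W^{1,1}$ that the paper's Proposition \ref{regFixpoint} performs. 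Note also that mollifying in time as well would not be a painless fix, because then $b^n(t,.)\to b(t,.)$ only for a.e.\ $t$ and your identification of the weak-$*$ limit as $\nabla\cdot\big(b(t,.)p(t,.)\big)$ would only hold for a.e.\ $t$, with no obvious upgrade to every $t$ in the absence of time regularity of $b$; so the space-only mollification is the right choice, and the missing regularity of $p^n$ for time-measurable drift is the point to make rigorous. Finally, two minor remarks: \eqref{hyp2} is stated for all $t\in[0,T]$ including $t=0$, which requires the (easy) direct estimate on $\nabla\cdot\big(b(0,.)\bar m\big)$ with $\bar m\in W^{1,1}$; and your closing claims about $p(t,.)\in W^{1,1}$ and the product rule for the limiting $p$ are not needed for the proposition and, to be fully justified, would require the approximation argument of the paper's Lemma \ref{Sobol}.
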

\begin{remark}
In fact, when $\,\sup_{t \in [0,T]} \|\nabla \cdot b(t,.)\|_{L^\rho}<+\infty$ for some $\rho \in [d,+\infty]$, we will prove that for $t \in (0,T]$, $\nabla \cdot \Big(b(t,.)p(t,.) \Big) \in L^1\left(\R^d\right)$ and $\left\|\nabla \cdot \Big(b(t,.)p(t,.) \Big) \right\|_{\rm TV} = \left\|\nabla \cdot \Big(b(t,.)p(t,.) \Big) \right\|_{L^1}$.
\end{remark}

The results of Proposition \ref{hyp} will be used in the proof of Theorem \ref{cvgenceB} detailed in Subsection \ref{demoo}, and in the proof of Proposition \ref{propregM} detailed in Subsection \ref{demooo}.\\ \\
We bring to attention that, in this subsection, all the derivatives and divergence are defined in the sense of distributions.


\subsection{Proof of Proposition \ref{hyp}}\label{hypH}

Let $\theta \in (0,T]$, we define the Banach spaces $\displaystyle \mathcal{\bar C}\left((0,\theta],L^1\left(\R^d\right)\right) = \left\{q \in \mathcal{C}\left((0,\theta],L^1\left(\R^d\right)\right): \sup_{t \in (0,\theta]}\left\|q(t,.)\right\|_{L^1}<+\infty \right\}$, $\displaystyle \mathcal{\tilde{C}}\left((0,\theta],W^{1,1}\left(\R^d\right)\right) = \left\{q \in \mathcal{C}\left((0,\theta],W^{1,1}\left(\R^d\right)\right): \opnorm{q} = \sup_{t \in (0,\theta]}\left\|q(t,.)\right\|_{L^1} + \sup_{t \in (0,\theta]}\sqrt{t}\left\|\nabla q(t,.)\right\|_{L^1}<+\infty \right\}$ and $\mathcal{C}\left([0,\theta],W^{1,1}\left(\R^d\right)\right)$ endowed respectively with the norms $\displaystyle \sup_{t \in (0,\theta]}\left\|q(t,.)\right\|_{L^1}$, $\opnorm{q}$ and $\displaystyle \sup_{t \in [0,\theta]}\left\|q(t,.)\right\|_{W^{1,1}}$. One has $\mathcal{C}\left([0,\theta],W^{1,1}\left(\R^d\right)\right) \subset \mathcal{\tilde{C}}\left((0,\theta],W^{1,1}\left(\R^d\right)\right) \subset \mathcal{\bar C}\left((0,\theta],L^1\left(\R^d\right)\right)$.\\

The next theorem states regularity properties of the density $\left(p(t,.)\right)_{t \in (0,T]}$ when assuming more regularity on $b$ w.r.t. the space variables. 
\begin{thm}\label{sobolP}
  Assume $b:[0,T]\times\R^d \to \R^d$ is measurable and bounded by $B<+\infty$. If $\,\sup_{t \in [0,T]} \|\nabla \cdot b(t,.)\|_{L^\rho}<+\infty$ for some $\rho \in [d,+\infty]$ or for $d=1$, $\sup_{t \in [0,T]}\left\|\partial_x b(t,.)\right\|_{\rm TV}<+\infty$ then $p \in \mathcal{\tilde C}\left((0,T],W^{1,1}\left(\R^d \right) \right)$. Moreover, if $m$ admits a density w.r.t. the Lebesgue measure in $W^{1,1}\left(\R^d\right)$ then $p\in \mathcal{C}\left([0,T],W^{1,1}\left(\R^d\right)\right)$.
\end{thm}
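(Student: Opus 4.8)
The plan is to set up a fixed-point argument for the mild equation \eqref{mildf} in the Banach space $\mathcal{\tilde C}\left((0,T],W^{1,1}\left(\R^d\right)\right)$, exploiting the smoothing of the heat kernel to gain one spatial derivative while paying only an integrable singularity in time. First I would record the relevant heat-kernel bounds: from Lemma \ref{EstimHeatEq}, $\|\nabla G_{t}\|_{L^1}\lesssim t^{-1/2}$ and $\|\partial_{x_i}\partial_{x_j} G_t\|_{L^1}\lesssim t^{-1}$. The key observation is that the convolution term $\int_0^t \nabla G_{t-s}*\big(b(s,.)p(s,.)\big)\,ds$ can be estimated in two ways: either leave the derivative on $G$, which gives, for the $L^1$-norm, a bound $\lesssim B\int_0^t (t-s)^{-1/2}\sup_u\|p(u,.)\|_{L^1}\,ds \lesssim B\sqrt t$; or, when we want to control $\nabla p$, we must differentiate the convolution. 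Here we cannot put two derivatives on $G$ unconditionally, so instead I would move a derivative onto $b(s,.)p(s,.)$ via integration by parts in the distributional sense, writing $\partial_{x_i}\big(\nabla G_{t-s}*(b p)\big) = \nabla G_{t-s} * \partial_{x_i}(b p)$, and then use the product rule $\partial_{x_i}(b_j p) = (\partial_{x_i} b_j) p + b_j \partial_{x_i} p$. This is exactly where the regularity hypothesis on $b$ enters.

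The crux of the matter — and the step I expect to be the main obstacle — is controlling the term involving $\nabla \cdot b$ (or, in the $d=1$ BV case, the measure $\partial_x b$) against $p(s,.)$. When $\sup_t\|\nabla\cdot b(t,.)\|_{L^\rho}<\infty$ with $\rho\ge d$, one writes $\nabla G_{t-s} * \big((\nabla\cdot b(s,.))\,p(s,.)\big)$ and estimates its $L^1$-norm by Young's inequality: $\|\nabla G_{t-s}\|_{L^{r}}\,\|(\nabla\cdot b)\,p\|_{L^{r'}}$ for a suitable exponent, or more simply $\|\nabla G_{t-s}\|_{L^1}\,\|\nabla\cdot b(s,.)\|_{L^\rho}\,\|p(s,.)\|_{L^{\rho'}}$ with $\tfrac1\rho+\tfrac1{\rho'}=1$. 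The factor $\|p(s,.)\|_{L^{\rho'}}$ with $\rho'\le d/(d-1)$ must then be bounded using that $p(s,.) = G_s*m - (\text{correction})$, and $\|G_s*m\|_{L^{\rho'}} \le \|G_s\|_{L^{\rho'}}\|m\|_{\rm TV} \lesssim s^{-\frac d2(1-1/\rho')}$, an integrable singularity precisely because $\rho'\le d/(d-1)$, i.e. $\rho\ge d$. In the $d=1$ BV case, $\partial_x b(s,.)$ is a finite measure, so $\nabla G_{t-s}*\big(p(s,.)\,\partial_x b(s,.)\big)$ is estimated by $\|\nabla G_{t-s}\|_{L^\infty}\,\|p(s,.)\|_{\infty}\,\|\partial_x b(s,.)\|_{\rm TV}$ — but $\|\nabla G_{t-s}\|_{L^\infty}\lesssim (t-s)^{-1}$ is not integrable, so one instead keeps the derivative split differently, bounding $\|\nabla G_{t-s}* (p\,\partial_x b)\|_{L^1}\le \|G_{t-s}\|_{L^1}\|\partial_x(p\,\mu_b)\|\dots$; the cleanest route is $\|\nabla G_{t-s} * \mu\|_{L^1}\le \|\nabla G_{t-s}\|_{L^1}\|\mu\|_{\rm TV}\lesssim (t-s)^{-1/2}\|p(s,.)\|_\infty\|\partial_x b(s,.)\|_{\rm TV}$, using the $d=1$ bound $\|p(s,.)\|_\infty\lesssim s^{-1/2}$ coming again from $p(s,.)=G_s*m-\dots$. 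Either way one reaches an estimate of the schematic form
\begin{align*}
  \sqrt t\,\|\nabla p(t,.)\|_{L^1} \le C_1 + C_2\int_0^t \frac{\sqrt t}{\sqrt{t-s}}\Big(\frac{1}{\sqrt s} + \|\nabla p(s,.)\|_{L^1}\Big)\,ds,
\end{align*}
which closes via a singular Gronwall lemma after absorbing the $\sqrt t/\sqrt{t-s}\cdot 1/\sqrt s$ term (a Beta-function integral bounded uniformly on $(0,T]$).

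Concretely, the steps I would carry out are: (1) define the affine map $\Phi(q)(t,.) = G_t*m - \int_0^t \nabla G_{t-s}*(b(s,.)q(s,.))\,ds$ and verify, using the bounds above, that it maps $\mathcal{\tilde C}\left((0,T],W^{1,1}\right)$ to itself (this is where all the hard estimates live); (2) show $\Phi$ is a contraction on a short time interval $(0,\theta]$ — linearity makes this automatic from the norm estimate with $\theta$ small — and that $p$, which already satisfies \eqref{mildf} by Proposition \ref{propspde}, must coincide with the unique fixed point, hence $p\in\mathcal{\tilde C}\left((0,\theta],W^{1,1}\right)$; (3) iterate the short-time argument on $[\theta,2\theta],\dots$ using $p(\theta,.)\in W^{1,1}$ now as a smooth-enough initial datum — here one re-runs the same estimates but with $G_t*m$ replaced by $G_{t-\theta}*p(\theta,.)$, whose $W^{1,1}$-norm is finite without any time singularity — to propagate regularity up to $T$; (4) for the last claim, observe that if $m$ itself has a density in $W^{1,1}$, then $\|G_t*m\|_{W^{1,1}}\le\|m\|_{W^{1,1}}$ with no blow-up as $t\to 0$, so the same fixed-point argument runs directly in $\mathcal{C}\left([0,\theta],W^{1,1}\right)$ and then globally, giving $p\in\mathcal{C}\left([0,T],W^{1,1}\right)$. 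The one genuinely delicate point throughout is checking continuity in $t$ (not just boundedness) of $t\mapsto \nabla p(t,.)$ into $L^1$ near $t=0$ with the weight $\sqrt t$; this follows from dominated convergence applied to the convolution integrals once the uniform bounds are in hand, together with the strong continuity of the heat semigroup on $L^1$.
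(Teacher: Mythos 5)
Your overall architecture is the same as the paper's: write the mild equation, set up the affine map $\Phi(q)(t,.)=G_t*m-\int_0^t\nabla G_{t-s}*(b(s,.)q(s,.))\,ds$, prove a short-time contraction in the weighted space $\mathcal{\tilde C}\left((0,\theta_0],W^{1,1}\left(\R^d\right)\right)$, identify the fixed point with $p$, restart at $\theta_0$ with $p(\theta_0,.)\in W^{1,1}\left(\R^d\right)$ as initial datum, and run the unweighted argument in $\mathcal{C}\left([0,T],W^{1,1}\left(\R^d\right)\right)$ when $m$ has a $W^{1,1}$ density. However, two of your intermediate steps deviate in ways that matter. First, the identity $\partial_{x_i}\left(\nabla G_{t-s}*(bp)\right)=\nabla G_{t-s}*\partial_{x_i}(bp)$ together with the product rule $\partial_{x_i}(b_jp)=(\partial_{x_i}b_j)p+b_j\partial_{x_i}p$ uses the full Jacobian of $b$, which is not assumed when $d\ge 2$: only $\nabla\cdot b$ is controlled. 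The correct manipulation, isolated in the paper's Lemma \ref{Sobol}, is to first rewrite the scalar term as $\nabla G_{t-s}*(bq)=G_{t-s}*\nabla\cdot(bq)$ and only then differentiate, so that solely $\nabla\cdot(bq)=q\,\nabla\cdot b+b\cdot\nabla q$ ever appears. Your subsequent estimates do use only the divergence, so this is repairable, but as written the step is not justified under the hypothesis.

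Second, and this is the genuine gap, your control of the product term $(\nabla\cdot b)\,p$ goes through a bound on $\left\|p(s,.)\right\|_{L^{\rho'}}$ (or $\left\|p(s,.)\right\|_{\infty}$ when $d=1$) obtained from the mild equation and heat-kernel smoothing, and you only treat the leading term $G_s*m$. The correction $\int_0^s\nabla G_{s-u}*(b(u,.)p(u,.))\,du$ must be estimated as well, and at the endpoint $\rho=d$ the crude Young bound $\left\|\nabla G_{s-u}\right\|_{L^{\rho'}}\left\|bp\right\|_{L^1}\sim (s-u)^{-1}$ is not integrable; you would need a separate weakly singular Gronwall argument in $L^{\rho'}$, together with an a priori finiteness argument for that norm of $p$, which is not available beforehand (a priori $p(s,.)$ is only in $L^1$). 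The paper sidesteps all of this by applying the Sobolev embedding $W^{1,1}\left(\R^d\right)\hookrightarrow L^{\rho/(\rho-1)}\left(\R^d\right)$ (resp. $W^{1,1}(\R)\hookrightarrow L^{\infty}(\R)$ in the BV case) to the unknown $q$ itself, yielding $\left\|q\,\nabla\cdot b\right\|_{L^1}\le\check C\left\|q\right\|_{W^{1,1}}\left\|\nabla\cdot b\right\|_{L^\rho}$; this keeps the estimate entirely inside the fixed-point norm and is exactly what makes the borderline case $\rho=d$ admissible. Finally, your assertion that $p$ ``must coincide with the unique fixed point'' requires uniqueness of the mild equation in a class containing $p$ a priori, namely in $\mathcal{\bar C}\left((0,\theta],L^1\left(\R^d\right)\right)$ (the paper's Lemma \ref{uniqFP}); contraction in the smaller space $\mathcal{\tilde C}$ alone does not give this. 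These points are fixable, but as sketched the crucial endpoint case and the identification step are not closed.
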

The proof of Theorem \ref{sobolP} relies on the uniqueness of the mild equation \eqref{mildf}. This latter can be proved by a fixed-point method. To do so, for $\theta \in (0,T]$, we define on the space $\mathcal{\bar C}\left((0,\theta],L^1\left(\R^d\right)\right)$ the map $\Phi$:
\begin{align*}
  \displaystyle \Phi:q \mapsto \Bigg(\Phi_t(q) = G_t * m - \int_0^t \nabla G_{t-s} * \Big( b(s,.)q(s,.)\Big)\,ds \Bigg)_{t \in (0,\theta]}.
\end{align*}
By a slight abuse of notation, we do not make explicit the dependence of the map $\Phi$ on the time horizon $\theta$. Let us check that $\Phi$ is well-defined. For $t \in (0,\theta]$, we have, using the estimate \eqref{FirstDerivG} from Lemma \ref{EstimHeatEq}, that:
\begin{align}\label{estimL1}
  \displaystyle \left\|\Phi_t(q)\right\|_{L^1} \le \left\|G_t*m\right\|_{L^1} + dB\sqrt{\frac 2 \pi}\int_0^t \frac{1}{\sqrt{t-s}}\left\|q(s,.)\right\|_{L^1}\,ds \le 1 + 2dB\sqrt{\frac{2 \theta}{\pi}}\sup_{u \in (0,\theta]}\|q(u,.)\|_{L^1}.
\end{align} 
Hence, since $q \in \mathcal{\bar C}\left((0,\theta],L^1\left(\R^d\right)\right)$, we have that $\sup_{t \in (0,\theta]}\left\|\Phi_t(q)\right\|_{L^1} <+\infty$.\\
The following result ensures that the map $\Phi$ admits a unique fixed-point in $\displaystyle \mathcal{\bar C}\left((0,\theta],L^1\left(\R^d\right)\right)$.
\begin{lem}\label{uniqFP}
  Assume $b:[0,T]\times\R^d \to \R^d$ is measurable and bounded by $B<+\infty$. For all $\theta \in (0,T]$, $\left(p(t,.)\right)_{t \in (0,\theta]}$ is the unique fixed-point of the map $\Phi$ in $\mathcal{\bar C} \left((0,\theta], L^1\left(\R^d \right)\right)$. 
\end{lem}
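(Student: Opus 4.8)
The plan is to prove the two assertions of the lemma separately: first that $\left(p(t,.)\right)_{t\in(0,\theta]}$ genuinely belongs to $\mathcal{\bar C}\left((0,\theta],L^1\left(\R^d\right)\right)$ and solves $q=\Phi(q)$, and then that $\Phi$ can have at most one fixed point in this space, by turning the fixed-point identity into a (weakly singular) Volterra inequality and running a Gr\"onwall argument. Combining the two gives that $\left(p(t,.)\right)_{t\in(0,\theta]}$ is \emph{the} fixed point.

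\textbf{$p$ is a fixed point in the space.} For every $t\in(0,\theta]$, $p(t,.)$ is a probability density, so $\sup_{t\in(0,\theta]}\|p(t,.)\|_{L^1}=1<+\infty$; and Lemma \ref{evoldens} gives $\|p(s,.)-p(r,.)\|_{L^1}\le Q\left(\ln(s/r)+\sqrt{s-r}\right)$, which tends to $0$ as $r\to s$ uniformly on compact subintervals of $(0,\theta]$, hence $t\mapsto p(t,.)$ is continuous from $(0,\theta]$ into $L^1\left(\R^d\right)$. Thus $p\in\mathcal{\bar C}\left((0,\theta],L^1\left(\R^d\right)\right)$. Moreover the mild formulation \eqref{mildf} of Proposition \ref{propspde} reads exactly $p(t,.)=\Phi_t(p)$ as an identity in $L^1\left(\R^d\right)$ (the right-hand side being in $L^1$ by \eqref{estimL1}) for every $t\in(0,\theta]$, i.e. $p$ is a fixed point of $\Phi$.

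\textbf{Uniqueness.} Let $q_1,q_2\in\mathcal{\bar C}\left((0,\theta],L^1\left(\R^d\right)\right)$ be fixed points, set $w=q_1-q_2$ and $\delta(t)=\|w(t,.)\|_{L^1}$, which is bounded on $(0,\theta]$. Subtracting the two identities, writing $\nabla G_{t-s}*\left(b(s,.)w(s,.)\right)=\sum_{i=1}^d\partial_{x_i}G_{t-s}*\left(b_i(s,.)w(s,.)\right)$, and using Young's convolution inequality, $|b|\le B$ and the bound $\|\partial_{x_i}G_u\|_{L^1}=\sqrt{2/(\pi u)}$ from estimate \eqref{FirstDerivG} of Lemma \ref{EstimHeatEq}, we obtain
\begin{align*}
  \delta(t)\le dB\sqrt{\tfrac{2}{\pi}}\int_0^t\frac{\delta(s)}{\sqrt{t-s}}\,ds,\qquad t\in(0,\theta].
\end{align*}
Because the kernel $1/\sqrt{t-s}$ is singular, I would not invoke Gr\"onwall directly but first substitute this bound into itself, exchange the order of integration (legitimate since $\delta$ is bounded), and use the Beta integral $\int_u^t\frac{ds}{\sqrt{t-s}\sqrt{s-u}}=\pi$ to get
\begin{align*}
  \delta(t)\le 2d^2B^2\int_0^t\delta(u)\,du,\qquad t\in(0,\theta].
\end{align*}
Since $t\mapsto\int_0^t\delta$ is absolutely continuous, vanishes at $0$ and dominates its own derivative up to the constant $2d^2B^2$, the usual Gr\"onwall lemma forces $\delta\equiv 0$ on $(0,\theta]$, so $q_1=q_2$. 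Together with the first part this shows $\left(p(t,.)\right)_{t\in(0,\theta]}$ is the unique fixed point.

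The only real obstacle is the weak singularity of the heat-kernel gradient in the Volterra inequality for $\delta$: the classical Gr\"onwall lemma does not apply to $\delta(t)\le C\int_0^t\delta(s)(t-s)^{-1/2}\,ds$, so the one-step iteration combined with $\int_u^t(t-s)^{-1/2}(s-u)^{-1/2}\,ds=\pi$ (a Beta function value) is what is needed to remove the singularity before concluding; one should only make sure all the integrals involved are finite, which follows from $\sup_{(0,\theta]}\delta<+\infty$. Everything else — the $L^1$-continuity of $p(\cdot,.)$, Young's inequality, the explicit $L^1$-norm of $\partial_{x_i}G_u$ — is routine and already available in the excerpt.
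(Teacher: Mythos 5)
Your proof is correct, and it closes the argument by a route that differs from the paper's in the uniqueness step. You and the paper share the same ingredients for identifying $p$ as a fixed point (the mild equation \eqref{mildf} from Proposition \ref{propspde}, boundedness of $\|p(t,.)\|_{L^1}$, and the time-continuity from Lemma \ref{evoldens}), and you both rely on the same core estimate, namely inequality \eqref{equA}, $\left\|\Phi_t(q)-\Phi_t(\tilde q)\right\|_{L^1}\le dB\sqrt{2/\pi}\int_0^t (t-s)^{-1/2}\left\|q(s,.)-\tilde q(s,.)\right\|_{L^1}\,ds$. Where you diverge is in exploiting it: the paper iterates this bound $2n$ times to show that $\Phi^{2n}$ is a strict contraction on $\mathcal{\bar C}\left((0,\theta],L^1\left(\R^d\right)\right)$ for $n$ large, invokes Picard's theorem for $\Phi^{2n}$, and then transfers the fixed point back to $\Phi$ via $\Phi(q)=\Phi\left(\Phi^{2n}(q)\right)=\Phi^{2n}\left(\Phi(q)\right)$; you instead take two fixed points directly, obtain the weakly singular Volterra inequality for $\delta(t)=\|q_1(t,.)-q_2(t,.)\|_{L^1}$, remove the singularity with one self-substitution and the Beta integral $\int_u^t(t-s)^{-1/2}(s-u)^{-1/2}\,ds=\pi$ (giving the constant $2d^2B^2$, consistent with the paper's iterated estimate), and conclude by Gr\"onwall. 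Your version is more elementary — no Picard theorem and no fixed-point transfer trick are needed, since existence is not an issue once $p$ is exhibited as a fixed point via \eqref{mildf} — and all the measurability/finiteness points you flag are indeed covered by $q_1,q_2\in\mathcal{\bar C}\left((0,\theta],L^1\left(\R^d\right)\right)$, which makes $\delta$ continuous and bounded on $(0,\theta]$ so that Tonelli and the absolutely continuous Gr\"onwall step go through. What the paper's heavier machinery buys is reusability: the same $\Phi^{2n}$-contraction/Picard scheme is deployed again in Proposition \ref{regFixpoint}, where genuine existence of a fixed point in the smaller spaces $\mathcal{\tilde C}$ and $\mathcal{C}\left([0,\theta],W^{1,1}\left(\R^d\right)\right)$ is the whole point and cannot be bypassed the way you bypass it here.
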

\begin{proof}
Let $q \in \mathcal{\bar C}\left((0,\theta], L^1\left(\R^d \right)\right)$. Using Inequality \eqref{estimL1}, we have that $\sup_{t \in (0,\theta]}\left\|\Phi_t(q)\right\|_{L^1} <+\infty$.\\ For $0 < r \le s \le \theta$, adapting the proof of Lemma \ref{evoldens}, we obtain that: 
\begin{align}\label{timeCtyPhi}
  \displaystyle \left\|\Phi_s(q) - \Phi_r(q) \right\|_{L^1} \le \max\left(1,\sup_{u \in [r,s]}\|q(u,.)\|_{L^1}\right)Q \Big(\ln(s/r) + \sqrt{s-r} \Big).
\end{align}
Therefore, $t \mapsto \Phi_t(q)$ is continuous on $(0,\theta]$ with values in $L^1\left(\R^d\right)$ and $\Phi(q) \in \mathcal{\bar C}\left((0,\theta],L^1\left(\R^d\right)\right)$. \\
Now, let $q,\tilde q \in \mathcal{\bar C}\left((0,\theta],L^1\left(\R^d\right)\right)$. Using the same reasoning as for Inequality \eqref{estimL1}, we obtain:
\begin{align}\label{equA}
  \displaystyle \left\|\Phi_t(q) - \Phi_t(\tilde q)\right\|_{L^1} &\le dB\sqrt{\frac 2 \pi}\int_0^t \frac{1}{\sqrt{t-s}}\left\|q(s,.) - \tilde q(s,.)\right\|_{L^1}\,ds. 
\end{align} 
Let $n \in \N^{*}$, we define $\Phi^{n+1} = \Phi^n \circ \Phi = \Phi \circ \Phi^n$ and iterate Inequality \eqref{equA} $2n$-times to obtain:
\begin{align*}
  \displaystyle \left\|\Phi^{2n}_t(q) - \Phi^{2n}_t(\tilde q)\right\|_{L^1} &\le \left(dB\sqrt{\frac 2 \pi}\right)^{2n} \pi^n \int_0^t \frac{(t-s)^{n-1}}{(n-1)!} \left\|q(s,.) - \tilde q(s,.)\right\|_{L^1}\,ds \\
  &\le (dB)^{2n}\frac{(2\theta)^n}{n!}\; \sup_{u \in (0,\theta]}\left\|q(u,.) - \tilde q(u,.)\right\|_{L^1}.
\end{align*} 
Therefore, $\sup_{t \in (0,\theta]}\left\|\Phi^{2n}_t(q) - \Phi^{2n}_t(\tilde q)\right\|_{L^1} \le (dB)^{2n}\frac{(2T)^n}{n!}\; \sup_{t \in (0,\theta]}\left\|q(t,.) - \tilde q(t,.)\right\|_{L^1} $ and for $n$ big enough, $\Phi^{2n}$ is a contraction on $\mathcal{\bar{C}}\left((0,\theta], L^1\left(\R^d \right)\right)$. By Picard's Theorem, $\Phi^{2n}$ admits then a unique fixed-point $q$ in $\mathcal{\bar{C}} \left((0,\theta], L^1\left(\R^d \right)\right)$. We have that $\Phi(q) = \Phi\left(\Phi^{2n}(q)\right) =  \Phi^{2n}\left(\Phi(q)\right)$ making $\Phi(q)$ a fixed-point of $\Phi^{2n}$, but since this latter is unique, we conclude that $\Phi(q) = q$. For $t>0$, $p(t,.)$ is solution to the mild equation \eqref{mildf} and using Lemma \ref{evoldens}, $p \in \mathcal{\bar C}\left((0,\theta],L^1\left(\R^d\right)\right)$. Consequently, $\left(p(t,.)\right)_{t \in (0,\theta]}$ is the unique fixed-point of the map $\Phi$ in $\mathcal{\bar C} \left((0,\theta], L^1\left(\R^d\right)\right)$.
\end{proof}
Now, we seek to establish more regularity on the fixed-point of the map $\Phi$. 
\begin{prop}\label{regFixpoint}
Assume $b:[0,T]\times\R^d \to \R^d$ is measurable and bounded by $B<+\infty$. Moreover, assume that $\,\sup_{t \in [0,T]} \|\nabla \cdot b(t,.)\|_{L^\rho}<+\infty$ for some $\rho \in [d,+\infty]$ or that for $d=1$, $\sup_{t \in [0,T]}\left\|\partial_x b(t,.)\right\|_{\rm TV}<+\infty$.
\begin{itemize}
  \item If $m$ admits a density w.r.t the Lebesgue measure in $W^{1,1}\left(\R^d\right)$ then for all $\theta \in (0,T]$, $\Phi$ admits a unique fixed-point in the space $\mathcal{C}\left([0,\theta], W^{1,1}\left(\R^d \right)\right)$.
  \item Otherwise, there exists $\theta_0 \in (0,T]$ s.t. $\Phi$ admits a unique fixed-point in the space $\mathcal{\tilde C} \left((0,\theta_0], W^{1,1}\left(\R^d \right)\right)$.
\end{itemize}
\end{prop}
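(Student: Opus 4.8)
The plan is to run a Picard fixed-point argument for (an iterate of) $\Phi$ on the relevant Banach space, exactly as in the proof of Lemma~\ref{uniqFP}; the genuinely new ingredient is a control of the gradient of $\Phi_t(q)$ through the $W^{1,1}$-norm of $q(s,.)$. Its cornerstone is the following estimate: for each $s\in[0,T]$ and each $q\in W^{1,1}(\R^d)$, the distributional divergence of $x\mapsto b(s,x)\,q(x)$ is a finite signed measure and
\begin{align*}
  \left\|\nabla\cdot\bigl(b(s,.)\,q\bigr)\right\|_{\rm TV}\le C_b\,\|q\|_{W^{1,1}},
\end{align*}
where $C_b$ depends only on $B$, $d$, $\rho$ and $\sup_{t\in[0,T]}\|\nabla\cdot b(t,.)\|_{L^\rho}$ (resp.\ $\sup_{t\in[0,T]}\|\partial_x b(t,.)\|_{\rm TV}$ when $d=1$); together with the convolution identity $\nabla G_{t-s}*(b(s,.)q)=G_{t-s}*\nabla\cdot(b(s,.)q)$, valid since $b(s,.)q\in L^1$ has a finite-measure divergence, this is what allows one derivative to be transferred from the heat kernel onto $b(s,.)q$. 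To prove the estimate I would invoke the Leibniz rule $\nabla\cdot(b(s,.)q)=(\nabla\cdot b(s,.))\,q+b(s,.)\cdot\nabla q$: the second term is in $L^1$ with norm $\le B\|\nabla q\|_{L^1}$, while for the first term, when $d\ge 2$, the Sobolev embedding $W^{1,1}(\R^d)\hookrightarrow L^{d/(d-1)}(\R^d)$ (combined, when $\rho>d$, with interpolation against $\|q\|_{L^1}$) gives $\|q\|_{L^{\rho/(\rho-1)}}\le C\|q\|_{W^{1,1}}$, so Hölder's inequality yields $\|(\nabla\cdot b(s,.))\,q\|_{L^1}\le\|\nabla\cdot b(s,.)\|_{L^\rho}\|q\|_{L^{\rho/(\rho-1)}}$; when $d=1$, one uses instead $W^{1,1}(\R)\hookrightarrow L^\infty(\R)\cap\mathcal C(\R)$, so $q\,\partial_x b(s,.)$ is a finite measure of total mass $\le\|q\|_\infty\|\partial_x b(s,.)\|_{\rm TV}$, the Leibniz formula for the product of a $BV$ function and a continuous $W^{1,1}$ function being classical (see \cite{AmbFusPal}).

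Granted this, the rest is routine. \emph{Stability.} The $L^1$-bound on $\Phi_t(q)$ and the $L^1$-time-continuity of $t\mapsto\Phi_t(q)$ are \eqref{estimL1} and \eqref{timeCtyPhi}. Writing $\Phi_t(q)=G_t*m-\int_0^tG_{t-s}*\nabla\cdot(b(s,.)q(s,.))\,ds$, differentiating under the integral sign (legitimate from the $L^1$-bounds just recorded), and using $\|\nabla G_u\|_{L^1}\le\sqrt{2/\pi}\,d\,u^{-1/2}$ from \eqref{FirstDerivG},
\begin{align*}
  \|\nabla\Phi_t(q)\|_{L^1}\le\|\nabla G_t*m\|_{L^1}+\sqrt{\tfrac2\pi}\,d\,C_b\int_0^t\frac{\|q(s,.)\|_{W^{1,1}}}{\sqrt{t-s}}\,ds .
\end{align*}
If $m$ has a density in $W^{1,1}(\R^d)$, then $\|\nabla G_t*m\|_{L^1}\le\|\nabla m\|_{L^1}$ and the integral is $\le 2\sqrt\theta\sup_{s\in(0,\theta]}\|q(s,.)\|_{W^{1,1}}$, so $\Phi$ maps $\mathcal C([0,\theta],W^{1,1}(\R^d))$ into itself for every $\theta\in(0,T]$; setting $\Phi_0(q)=G_0*m=m$, continuity at $t=0$ follows since $G_t*m\to m$ in $W^{1,1}$ (approximate identity applied to $m$ and to $\nabla m\in L^1$) and the integral term is $\mathcal O(\sqrt t)$ in $W^{1,1}$. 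Otherwise, $\|\nabla G_t*m\|_{L^1}\le\sqrt{2/\pi}\,d\,t^{-1/2}$ and, since $\|q(s,.)\|_{W^{1,1}}\le(1+s^{-1/2})\opnorm{q}$ for $q\in\mathcal{\tilde C}((0,\theta],W^{1,1}(\R^d))$, the integral is $\le(2\sqrt t+\pi)\opnorm{q}$; multiplying by $\sqrt t$ shows $\Phi$ maps $\mathcal{\tilde C}((0,\theta],W^{1,1}(\R^d))$ into itself, time-continuity on $(0,\theta]$ for both $q(t,.)$ and $\nabla q(t,.)$ being obtained by adapting \eqref{timeCtyPhi}.

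\emph{Contraction and conclusion.} By linearity of $\Phi$, writing $w=q-\tilde q$ and adding the $L^1$ and gradient bounds,
\begin{align*}
  \|\Phi_t(q)-\Phi_t(\tilde q)\|_{W^{1,1}}\le\sqrt{\tfrac2\pi}\,d\,(B+C_b)\int_0^t\frac{\|w(s,.)\|_{W^{1,1}}}{\sqrt{t-s}}\,ds .
\end{align*}
In the $\mathcal C([0,\theta],W^{1,1})$ case, iterating this bound (using $\int_r^t(t-s)^{-1/2}(s-r)^{-1/2}\,ds=\pi$) shows the Lipschitz constant of $\Phi^{2n}$ is $\le\bigl(2d^2(B+C_b)^2T\bigr)^n/n!$, hence $<1$ for $n$ large; Picard's theorem then gives a unique fixed-point of $\Phi^{2n}$, which as in Lemma~\ref{uniqFP} is the unique fixed-point of $\Phi$ in $\mathcal C([0,\theta],W^{1,1}(\R^d))$, for every $\theta\in(0,T]$. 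In the $\mathcal{\tilde C}((0,\theta],W^{1,1})$ case, the same inequality together with the $L^1$-estimate (treating the $s^{-1/2}$ factor of $\|\nabla q(s,.)\|_{L^1}$ via $\int_0^t(t-s)^{-1/2}(1+s^{-1/2})\,ds=2\sqrt t+\pi$) gives $\opnorm{\Phi(q)-\Phi(\tilde q)}\le\kappa(\theta)\opnorm{q-\tilde q}$ with $\kappa(\theta)\to0$ as $\theta\to0^+$; picking $\theta_0$ with $\kappa(\theta_0)<1$ and applying Picard's theorem yields a unique fixed-point of $\Phi$ in $\mathcal{\tilde C}((0,\theta_0],W^{1,1}(\R^d))$. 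The main obstacle is the cornerstone estimate: handling $(\nabla\cdot b)\,q$ at the borderline exponent $\rho=d$, where the Sobolev embedding of $W^{1,1}$ (not a cheaper Hölder bound) is unavoidable, and, when $d=1$, making sense of the product of the measure $\partial_x b$ with the bounded — though continuous — function $q$, with its Leibniz formula; once these are in place, everything reduces to the Picard scheme already used for Lemma~\ref{uniqFP}.
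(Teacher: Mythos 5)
Your proposal is correct and follows essentially the same route as the paper: your ``cornerstone estimate'' is exactly the paper's Lemma \ref{Sobol} (Leibniz rule plus the embedding $W^{1,1}\hookrightarrow L^{\rho/(\rho-1)}$, resp. $L^\infty$ when $d=1$), used to transfer one derivative from the heat kernel onto $b(s,.)q(s,.)$, and the fixed-point structure — a direct contraction of $\Phi$ on $\mathcal{\tilde C}((0,\theta_0],W^{1,1})$ for small $\theta_0$, versus an iterated contraction of $\Phi^{2n}$ on $\mathcal{C}([0,\theta],W^{1,1})$ for arbitrary $\theta$ when $m\in W^{1,1}$, with continuity at $t=0$ via the approximate-identity property of $G_t$ — matches the paper's argument step for step.
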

Let us deduce Theorem \ref{sobolP} before giving the proof of Proposition \ref{regFixpoint}.
\begin{proof}
$\bullet$ For $\theta = \theta_0$ given by Proposition \ref{regFixpoint}:
\begin{itemize}
  \item[$-$] Let $t \in (0, \theta_0]$. According to Proposition \ref{regFixpoint}, the map $\Phi$ admits a unique fixed-point in $\mathcal{\tilde C}\left((0,\theta_0], W^{1,1}\left(\R^d \right)\right)$. With the inclusion $\mathcal{\tilde C}\left((0,\theta_0], W^{1,1}\left(\R^d \right)\right) \subset \mathcal{\bar C} \left((0,\theta_0], L^1\left(\R^d \right)\right)$, this fixed-point coincides with the unique fixed-point of $\Phi$ in $ \mathcal{\bar C} \left((0,\theta_0], L^1\left(\R^d \right)\right)$ which is $(p(t,.))_{t \in (0,\theta_0]}$ according to Lemma \ref{uniqFP}. Therefore, we have that $(p(t,.))_{t \in (0,\theta_0]} \in \mathcal{\tilde C}\left((0,\theta_0], W^{1,1}\left(\R^d \right)\right)$ and $\sup_{t \in (0,\theta_0]}\sqrt{t}\|p(t,.) \|_{W^{1,1}}\le \max\left(1,\sqrt{\theta_0}\,\right)\opnorm{p} <+\infty$.
  \item[$-$] Now, let $t \in [\theta_0,T]$. Using the fact that the heat kernel is a convolution semi-group and that $$\displaystyle p(\theta_0,.) = G_{\theta_0}*m - \int_0^{\theta_0}\nabla G_{\theta_0 - r}*\Big(b(r,.)p(r,.)\Big)\,dr,$$ we obtain: $$\displaystyle  p(t,.) = G_{t-\theta_0}*p(\theta_0,.) - \int_0^{t-\theta_0}\nabla G_{(t-\theta_0) - s}*\Big(b(\theta_0 + s,.)p(\theta_0 + s,.)\Big)\,ds$$
  such that for $u=(t - \theta_0) \in [0,T-\theta_0]$:
  $$\displaystyle  p(\theta_0+u,.) = G_{u}*p(\theta_0,.) - \int_0^{u}\nabla G_{u - s}*\Big(b(\theta_0 + s,.)p(\theta_0 + s,.)\Big)\,ds.$$
  Hence, by Lemma \ref{uniqFP}, $\left(p(\theta_0+u,.)\right)_{u \in [0,T-\theta_0]}$ is the unique fixed-point in $\mathcal{\bar C}\left((0,T-\theta_0], L^{1}\left(\R^d \right)\right)$ of the functional defined like $\Phi$ but with $m$ replaced by $p(\theta_0,.)$ and $b$ shifted by $\theta_0$ in the time variable. Since $p(\theta_0,.) \in W^{1,1}\left(\R^d\right)$ and according to Proposition \ref{regFixpoint}, this functional admits a unique fixed-point in $\mathcal{C} \left([0,T-\theta_0], W^{1,1}\left(\R^d \right)\right)$ and this fixed-point coincides with $\left(p(\theta_0+u,.)\right)_{u \in [0,T-\theta_0]}$. Therefore, $\left(p(t,.)\right)_{t \in [\theta_0,T]} \in \mathcal{C}\left([\theta_0,T], W^{1,1}\left(\R^d \right)\right)$ and $\sup_{t \in [\theta_0,T]}\sqrt{t}\| p(t,.)\|_{W^{1,1}} \le \sqrt{T}\sup_{t \in [\theta_0,T]}\| p(t,.)\|_{W^{1,1}} <+\infty$.
\end{itemize}
We can conclude.\\
$\bullet$ Now, we assume that $m$ admits a density w.r.t. the Lebesgue measure in $W^{1,1}\left(\R^d\right)$. For $\theta = T$ and according to Proposition \ref{regFixpoint}, the map $\Phi$ admits a unique fixed-point in $\mathcal{C}\left([0,T], W^{1,1}\left(\R^d \right)\right)$. With the inclusion $\mathcal{C}\left([0,T], W^{1,1}\left(\R^d \right)\right) \subset \mathcal{\bar C} \left((0,T], L^1\left(\R^d \right)\right)$, this fixed-point coincides with the unique fixed-point of $\Phi$ in $ \mathcal{\bar C} \left((0,T], L^1\left(\R^d \right)\right)$ which is $(p(t,.))_{t \in (0,T]}$ according to Lemma \ref{uniqFP}. Therefore, $(p(t,.))_{t \in [0,T]} \in \mathcal{C}\left([0,T], W^{1,1}\left(\R^d \right)\right)$.
\end{proof}
To prove Proposition \ref{regFixpoint}, we need the following convolution and derivation in the sense of distributions result.
\begin{lem}\label{Sobol}
  Let $q:\R^d \to \R$ be a function in $W^{1,1}\left(\R^d\right)$ and $g:\R^d \to \R^d$ a bounded measurable function. We assume either that $\|\nabla \cdot g\|_{L^\rho}<+\infty$ for some $\rho \in [d,+\infty]$ or that for $d=1$, $\left\|g'\right\|_{\rm TV}<+\infty$.\\
  $\bullet$ Under the first assumption, for any $\varphi:\R^d \to \R \;$ $\mathcal{C}^{\infty}$-bounded together with its first order derivatives, we have:
  \begin{align*}
  \displaystyle \int_{\R^d} \nabla \varphi(x) \cdot \Big(q(x)g(x)\Big)\,dx &= - \int_{\R^d}\varphi(x) \Big(\nabla q(x) \cdot g(x) + q(x) \nabla \cdot g(x) \Big)\,dx
  \end{align*}
  so that in the sense of distributions, $\nabla \cdot \Big(qg \Big)= q\nabla \cdot g + \nabla q\cdot g$.\\Moreover, for $\displaystyle \check{C} = \sup_{\substack{f \in W^{1,1} \\ f \neq 0 }} \frac{\|f\|_{L^{\frac{\rho}{\rho-1}}(\R^d)}}{\|f\|_{W^{1,1}(\R^d)}}$ which is finite according to Corollary $IX.10$ \cite{Brezis}, we have that:
  \begin{align}\label{normMaj}
    \displaystyle \left\|\nabla \cdot \Big(qg \Big) \right\|_{L^1} &\le \check{C}\left\|q \right\|_{W^{1,1}} \left\| \nabla \cdot g\right\|_{L^{\rho}} + \left\|\nabla q \right\|_{L^1} \left\| g\right\|_{L^{\infty}}. 
  \end{align}
  $\bullet$ Under the second assumption, for any $\varphi:\R \to \R \;$ $\mathcal{C}^{\infty}$-bounded together with its first order derivative, we have:
  \begin{align*}
  \displaystyle \int_{\R}  \varphi'(x)\Big(q(x)g(x)\Big)\,dx &= - \int_{\R}\varphi(x)q'(x)g(x)\,dx - \int_{\R}\varphi(x)q(x)g'(dx)
  \end{align*}
  where the continuous representative of $q$ which exists according to Theorem $VIII.2$ \cite{Brezis}, is chosen to define $q(x)g'(dx)$. Moreover, the derivative of $qg$ in the sense of distributions is a bounded measure on the real line and for $\displaystyle \check{C} = \sup_{\substack{f \in W^{1,1} \\ f \neq 0 }} \frac{\|f\|_{\infty}}{\|f\|_{W^{1,1}(\R)}}$ which is finite according to Theorem $VIII.7$ \cite{Brezis}, we have that:
  \begin{align}\label{normMaj1}
    \displaystyle \left\|(qg)' \right\|_{\rm TV} &\le \check{C} \left\|q \right\|_{W^{1,1}} \left\| g'\right\|_{\rm TV} + \left\|q' \right\|_{L^1} \left\| g\right\|_{\infty}.
  \end{align}
\end{lem}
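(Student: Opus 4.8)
The plan is to establish both identities by a single mollification argument in which I regularize only $q$ and leave $g$ untouched. I would fix a standard mollifying sequence $(\eta_n)_{n\ge1}$ on $\R^d$ and set $q_n=q*\eta_n\in\mathcal{C}^\infty(\R^d)$, so that $q_n\to q$ and $\nabla q_n\to\nabla q$ in $L^1(\R^d)$, and hence $q_ng\to qg$ in $L^1$ because $g$ is bounded. The reason for mollifying $q$ rather than $g$ is that the Leibniz rule for a \emph{smooth} function against one whose divergence (resp. derivative, when $d=1$) is merely a distribution is a pure integration-by-parts identity: for $\psi\in\mathcal{C}^\infty_c$, since $q_n\psi\in\mathcal{C}^\infty_c$ one may expand $\langle\nabla\cdot g,\,q_n\psi\rangle=-\int_{\R^d}g\cdot\nabla(q_n\psi)\,dx$ (resp. $\langle g',q_n\psi\rangle=-\int_\R g\,(q_n\psi)'\,dx$) by the classical product rule and rearrange to obtain $\nabla\cdot(q_ng)=\nabla q_n\cdot g+q_n\,\nabla\cdot g$ (resp. $(q_ng)'=q_n'g\,dx+q_ng'$) in the sense of distributions, where $q_ng'$ denotes the measure $\psi\mapsto\int q_n\psi\,g'(dx)$.

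Next I would pass to the limit $n\to\infty$. Under the first assumption, writing $\rho'=\rho/(\rho-1)$, the hypothesis $\rho\ge d$ is exactly what makes the Sobolev embedding $W^{1,1}(\R^d)\hookrightarrow L^{\rho'}(\R^d)$ hold (Corollary $IX.10$ \cite{Brezis} interpolated with $L^1$; the endpoint $\rho'=+\infty$, which can occur only for $d=1$, being covered by Theorem $VIII.7$ \cite{Brezis}); this yields $q\in L^{\rho'}$ and $q_n\to q$ in $L^{\rho'}$, so by Hölder's inequality with exponents $\rho',\rho$ the term $q_n\,\nabla\cdot g$ converges to $q\,\nabla\cdot g$ in $L^1$, while $\nabla q_n\cdot g\to\nabla q\cdot g$ in $L^1$ since $g$ is bounded. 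Combined with $\nabla\cdot(q_ng)\to\nabla\cdot(qg)$ in $\mathcal{D}'$ (from $q_ng\to qg$ in $L^1$), this gives $\nabla\cdot(qg)=\nabla q\cdot g+q\,\nabla\cdot g\in L^1(\R^d)$, hence $qg\in W^{1,1}(\R^d)$; the stated integral identity for any admissible $\varphi$ then follows from $\int\nabla\varphi\cdot u=-\int\varphi\,\nabla\cdot u$ for $u\in W^{1,1}$, obtained by density of $\mathcal{C}^\infty_c$ in $W^{1,1}$ (the bounded factors $\varphi,\nabla\varphi$ being harmless in the $L^1$ limit), and \eqref{normMaj} is just the triangle inequality together with $\|q\|_{L^{\rho'}}\le\check C\|q\|_{W^{1,1}}$. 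Under the second assumption I would argue in the same way, now using that $q\in W^{1,1}(\R)$ has a uniformly continuous representative in $C_0(\R)$ (Theorems $VIII.2$ and $VIII.7$ \cite{Brezis}), so $q_n\to q$ uniformly on $\R$; then $q_n'g\,dx\to q'g\,dx$ in $L^1$ and, crucially, $q_ng'\to qg'$ in total variation since $\|q_ng'-qg'\|_{\rm TV}\le\|q_n-q\|_\infty\|g'\|_{\rm TV}$, so that $(qg)'=q'g\,dx+qg'$ is a finite measure. The integral identity for a general $\varphi$ would come from applying the distributional identity to $\varphi\chi_R\in\mathcal{C}^\infty_c(\R)$ with $\chi_R\uparrow1$ and $\|\chi_R'\|_\infty$ bounded, then letting $R\to+\infty$ by dominated convergence (using $qg\in L^1$ and finiteness of $(qg)'$), and \eqref{normMaj1} again from the triangle inequality and $\|q\|_\infty\le\check C\|q\|_{W^{1,1}}$.

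The step I expect to be the main obstacle is the second assumption: to give meaning to the product $q(x)\,g'(dx)$ one must pin down the continuous representative of $q$, and the convergence $q_ng'\to qg'$ has to be in a sufficiently strong topology — total variation — which forces me to upgrade the merely $L^1$ convergence of the mollification to \emph{uniform} convergence, via the one-dimensional embedding $W^{1,1}(\R)\hookrightarrow C_0(\R)$ and uniform continuity. In dimension $d\ge2$ the analogous sticking point is simply that one genuinely needs $\rho\ge d$: without it $W^{1,1}(\R^d)$ fails to embed into $L^{\rho'}(\R^d)$, the product $q\,\nabla\cdot g$ need not be integrable, and both the identity $\nabla\cdot(qg)=\nabla q\cdot g+q\,\nabla\cdot g$ in $L^1$ and the bound \eqref{normMaj} break down.
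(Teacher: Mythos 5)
Your route is essentially the paper's: approximate $q$ by smooth functions converging in $W^{1,1}$, apply the classical Leibniz rule to the smooth approximation paired with $\nabla\cdot g$ (resp.\ $g'$), and pass to the limit using H\"older together with the embeddings $W^{1,1}(\R^d)\hookrightarrow L^{\rho/(\rho-1)}(\R^d)$ and, in dimension one, $W^{1,1}(\R)\hookrightarrow L^\infty$ with uniform convergence of the approximants; the bounds \eqref{normMaj} and \eqref{normMaj1} then follow exactly as in the paper. The only structural difference is that the paper takes $q_n\in\mathcal{C}^\infty_c(\R^d)$ (density in $W^{1,1}$) and tests $\nabla\cdot g$ directly against $\varphi q_n$, which is compactly supported even though $\varphi$ is not, so the identity for the general bounded $\varphi$ with bounded derivatives is obtained in one limiting step; you instead mollify $q$, first prove the distributional identity against $\mathcal{C}^\infty_c$ test functions, and then extend to general $\varphi$ in a second step.

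That second step contains one incorrect claim in the case $d\ge 2$: from $qg\in L^1$ and $\nabla\cdot(qg)\in L^1$ you deduce ``hence $qg\in W^{1,1}(\R^d)$''. This does not follow — the full gradient of $qg$ involves $q\,\partial_{x_j}g_i$, over which you have no control, so $qg$ need not lie in $W^{1,1}$ — and consequently the appeal to density of $\mathcal{C}^\infty_c$ in $W^{1,1}$ applied to $u=qg$ is not licensed. The gap is easily closed with tools you already use: the identity $\int\nabla\varphi\cdot u=-\int\varphi\,\nabla\cdot u$ for $\varphi$ bounded with bounded gradient only requires $u\in L^1$ and $\nabla\cdot u\in L^1$, and is obtained by testing against $\varphi\chi_R$ with a cutoff $\chi_R$ and letting $R\to\infty$ by dominated convergence — precisely the argument you carry out in the one-dimensional case (alternatively, follow the paper and use compactly supported approximants $q_n$, so that no extension step is needed at all). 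With that repair the proof is complete and matches the paper's.
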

\begin{proof}
According to Theorem $IX.2$ \cite{Brezis}, there exists a sequence of functions $\left(q_n\right)_{n}$ in $\mathcal{C}^{\infty}_c\left(\R^d\right)$ such that  when $n \to +\infty$, $q_n \to q$ and $\nabla q_n \to \nabla q$ respectively in $L^1\left(\R^d\right)$ and $L^1\left(\R^d\right)^d$. We have:
\begin{align}\label{convLim}
  \displaystyle \int_{\R^d} \nabla\varphi(x)\cdot \Big(q_n(x)g(x)\Big)\,dx &= \int_{\R^d}\nabla\Big(\varphi(x)q_n(x)\Big)\cdot g(x)\,dx - \int_{\R^d}\varphi(x)\nabla q_n(x)\cdot g(x)\,dx.
\end{align}
Since $\nabla \varphi$, $g$ and $\varphi$ are bounded functions on $\R^d$, we have $\displaystyle \int_{\R^d} \nabla\varphi(x)\cdot \Big(q_n(x)g(x)\Big)\,dx \underset{n \to +\infty}{\longrightarrow} \int_{\R^d} \nabla\varphi(x)\cdot \Big(q(x)g(x)\Big)\,dx$ and 
$\displaystyle \int_{\R^d} \varphi(x)\nabla q_n(y)\cdot g(x)\,dx \underset{n \to +\infty}{\longrightarrow} \int_{\R^d} \varphi(x)\nabla q(x)\cdot g(x)\,dx$.\\
$\bullet$ Under the first assumption, since $\varphi q_n \in C^{\infty}_c\left(\R^d\right)$, we have that $\displaystyle \int_{\R^d}\nabla\Big(\varphi(x)q_n(x)\Big)\cdot g(x)\,dx = - \int_{\R^d}\varphi(x)q_n(x)\nabla \cdot g(x)\,dx$. Using H\"older's inequality, we obtain that:
\begin{align*}
  \displaystyle \left|\int_{\R^d}\varphi(x)q_n(x)\nabla \cdot g(x)\,dx -  \int_{\R^d}\varphi(x)q(x)\nabla \cdot g(x)\,dx\right| &\le \sup_{x \in \R^d}|\varphi(x)| \left\| q_n - q\right\|_{L^{\frac{\rho}{\rho-1}}} \left\| \nabla \cdot g\right\|_{L^{\rho}} \\
  &\le \check{C} \sup_{x \in \R^d}|\varphi(x)| \left\| q_n - q\right\|_{W^{1,1}} \left\| \nabla \cdot g\right\|_{L^{\rho}}\\
  &\underset{n \to +\infty}{\longrightarrow} 0.
\end{align*}
Hence, taking the limit $n \to +\infty$ in Equation \eqref{convLim}, we get, in the sense of distributions, that $\nabla \cdot \left(qg\right) = q\nabla \cdot g + \nabla q \cdot g$. Now, using once again H\"older's inequality and Corollary $IX.10$ \cite{Brezis}, one has $$\left\| q \nabla \cdot g\right\|_{L^1} \le \left\|q\right\|_{L^{\frac{\rho}{\rho-1}}} \left\| \nabla \cdot g\right\|_{L^{\rho}} \le \check{C} \left\| q\right\|_{W^{1,1}} \left\| \nabla \cdot g\right\|_{L^{\rho}} $$ and one deduces that $$\left\|\nabla \cdot \Big(qg \Big) \right\|_{L^1} \le \check{C} \left\|q \right\|_{W^{1,1}} \left\| \nabla \cdot g\right\|_{L^{\rho}} + \left\|\nabla q \right\|_{L^1} \left\| g\right\|_{L^{\infty}}.$$
$\bullet$ Under the second assumption, since $\varphi q_n \in \mathcal{C}^{\infty}_c(\R)$, we have that $\displaystyle \int_{\R}\Big(\varphi q_n\Big)'(x)g(x)\,dx = - \int_{\R} \varphi(x)q_n(x)g'(dx)$. According to Theorem $VIII.2$ and Theorem $VIII.7$ \cite{Brezis}, $q$ admits a bounded and continuous representative, and for this representative the integral $\displaystyle \int_{\R}\varphi(x)q(x) g'(dx)$ makes sense. Using H\"older's inequality, we have:
\begin{align*}
  \displaystyle \left|\int_{\R}\varphi(x)q_n(x)g'(dx) -  \int_{\R}\varphi(x)q(x)g'(dx)\right| &\le \sup_{x \in \R}|\varphi(x)| \sup_{x \in \R}\left| q_n(x) - q(x) \right| \left\| g'\right\|_{\rm TV} \\
  &\le \check{C} \sup_{x \in \R}|\varphi(x)| \left\| q_n - q\right\|_{W^{1,1}} \left\| g'\right\|_{\rm TV}\\
  &\underset{n \to +\infty}{\longrightarrow} 0.
\end{align*}
Hence, taking the limit $n \to +\infty$ in Equation \eqref{convLim}, we get, in the sense of distributions, that $\left(qg\right)' = q g' + q'g$. Now, using once again H\"older's inequality and Theorem $VIII.7$ \cite{Brezis}, one has $$\left\| qg'\right\|_{\rm TV} \le \sup_{x \in \R}\left|q(x) \right| \left\| g'\right\|_{\rm TV} \le \check{C} \left\|q\right\|_{W^{1,1}} \left\| g'\right\|_{\rm TV}  $$ and one deduces that $$\left\|(qg)' \right\|_{\rm TV} \le \check{C} \left\|q \right\|_{W^{1,1}} \left\| g'\right\|_{\rm TV} + \left\|q' \right\|_{L^1} \left\| g\right\|_{\infty}.$$
\end{proof}
We are now ready to prove Proposition \ref{regFixpoint}.
\begin{proof}
We are going to suppose that $\,\sup_{t \in [0,T]} \|\nabla \cdot b(t,.)\|_{L^\rho}<+\infty$ for some $\rho \in [d,+\infty]$. When $d=1$, $\sup_{t \in [0,T]}\left\|\partial_x b(t,.)\right\|_{\rm TV}<+\infty$, the proof is analogous and the estimations remain valid when replacing $\left\|\nabla \cdot b(t,.) \right\|_{L^{\rho}}$ by $\left\|\partial_x b(t,.)\right\|_{\rm TV}$.\\

For $\theta \in (0,T]$, let $ 0  \le s < t \le \theta $. If $q(s,.)$ is in $W^{1,1}\left(\R^d\right)$, we apply Lemma \ref{Sobol} with $\varphi = G_{t-s}$ which is $\mathcal{C}^{\infty}$-bounded together with its first order derivatives and $g = b(s,.)$ to obtain that:
\begin{align*}
  \nabla G_{t-s} * \Big(b(s,.)q(s,.)\Big) = G_{t-s} * \nabla \cdot \Big(b(s,.)q(s,.)\Big).
\end{align*}
If $q \in \mathcal{\tilde C}\left( (0,\theta],W^{1,1}\left(\R^d\right)\right)$, we first have:
\begin{align}\label{nablaq}
  \forall t \in (0,\theta], \quad \Phi_t(q) = G_t * m - \int_0^t G_{t-s} * \nabla \cdot \left(b(s,.)q(s,.)\right)\,ds.
\end{align}

 $\bullet$ We start by proving that there exists $\theta_0$ s.t. $\Phi$ admits a unique fixed-point in $\mathcal{\tilde C}\left((0,\theta_0],W^{1,1}\left(\R^d\right)\right)$:\\

 Let $t \in (0,\theta]$ and $q \in \mathcal{\tilde C}\left((0,\theta],W^{1,1}\left(\R^d\right)\right)$. We have, using the estimate \eqref{FirstDerivG} from Lemma \ref{EstimHeatEq} and Inequality \eqref{normMaj} from Lemma \ref{Sobol}, that:
\begin{align*}
  \displaystyle &\int_{0}^t \left\|\nabla G_{t-s} * \nabla \cdot \Big(b(s,.)q(s,.)\Big)\right\|_{L^1}\,ds \\
  &\le d\sqrt{\frac 2 \pi}\int_0^t \frac{1}{\sqrt{t-s}} \left(B + \check{C} \sup_{u\in [0,T]}\|\nabla \cdot b(u,.) \|_{L^{\rho}} \right) \Big(\|q(s,.)\|_{L^1} + \|\nabla q(s,.)\|_{L^1} \Big)\,ds \\
  &\le d\sqrt{2 \pi}\max(1,\sqrt{T})\left(B + \check{C} \sup_{u\in [0,T]}\|\nabla \cdot b(u,.) \|_{L^{\rho}}\right)\left(\sup_{u \in (0,\theta]}\|q(u,.)\|_{L^1} + \sup_{u \in (0,\theta]}\sqrt u\|\nabla q(u,.)\|_{L^1} \right).
\end{align*}
Therefore, $\displaystyle \int_{0}^t \left\|\nabla G_{t-s} * \nabla \cdot \Big(b(s,.)q(s,.)\Big)\right\|_{L^1}\,ds \le d\sqrt{2 \pi}\max(1,\sqrt{T})\left(B + \check{C} \sup_{u\in [0,T]}\|\nabla \cdot b(u,.) \|_{L^{\rho}}\right)\opnorm{q}$ which is finite. We can then apply Fubini's theorem and obtain that, in the sense of distributions, the gradient of $\Phi_t(q)$ defined in \eqref{nablaq}, is equal to:
\begin{align}\label{gradPhi}
  \displaystyle \nabla \Phi_t(q) = \nabla G_t* m - \int_0^t \nabla G_{t-s}*\nabla \cdot \Big(b(s,.)q(s,.) \Big)\,ds.
\end{align}
We can now estimate $\opnorm{\Phi(q)}$. Using the same arguments as before, we have that: $$\displaystyle \opnorm{G*m} = \sup_{t \in [0,\theta]}\|G_t*m \|_{L^1} + \sup_{t \in [0,\theta]}\sqrt{t}\sum_{i=1}^d\|\partial_{x_i} G_t*m\|_{L^1} \le 1 + d\sqrt{\frac 2 \pi} <+\infty,$$
and that:
\begin{align*}
  \displaystyle &\|\Phi_t(q)\|_{L^1} + \sqrt{t}\|\nabla \Phi_t(q)\|_{L^1} \\
    &\le \left(1 + d\sqrt{\frac 2 \pi}\right) + d\sqrt{\frac 2 \pi}\int_0^t \frac{1}{\sqrt{t-s}} \left\{ B\|q(s,.)\|_{L^1} + \sqrt{t} \Big(\check{C} \sup_{u\in [0,T]}\|\nabla \cdot b(u,.) \|_{L^{\rho}}\|q(s,.)\|_{W^{1,1}} + B\|\nabla q(s,.)\|_{L^1}\Big) \right\}\,ds \\
    &\le \left(1 + d\sqrt{\frac 2 \pi}\right) + d\sqrt{\frac 2 \pi}\int_0^t \frac{1}{\sqrt{t-s}} \bigg\{ \left(B + \sqrt{T}\check{C} \sup_{u\in [0,T]}\|\nabla \cdot b(u,.) \|_{L^{\rho}} \right)\|q(s,.)\|_{L^1} \\
    &\phantom{\left(1 + d\sqrt{\frac 2 \pi}\right) + d\sqrt{\frac 2 \pi}\int_0^t \frac{1}{\sqrt{t-s}} \bigg\{\Big(B + \sqrt{T}}+ \sqrt{\theta}\left(B + \check{C} \sup_{u\in [0,T]}\|\nabla \cdot b(u,.) \|_{L^{\rho}} \right)\|\nabla q(s,.)\|_{L^1}  \bigg\}\,ds 
\end{align*}
\begin{align*}   
&\le \left(1 + d\sqrt{\frac 2 \pi}\right) + \sqrt{\theta}d\sqrt{\frac 2 \pi}\Bigg\{ 2\left(B + \sqrt{T}\check{C} \sup_{u\in [0,T]}\|\nabla \cdot b(u,.) \|_{L^{\rho}} \right)\sup_{u\in (0,\theta]}\|q(u,.)\|_{L^1} \\
&\phantom{\left(1 + d\sqrt{\frac 2 \pi}\right) + \sqrt{\theta}\bigg\{ 2d\sqrt{\frac 2 \pi}\Big(B + \sqrt{T}\check{C} }+ \pi\left(B + \check{C} \sup_{u\in [0,T]}\|\nabla \cdot b(u,.) \|_{L^{\rho}} \right) \sup_{u\in (0,\theta]}\sqrt{u}\|\nabla q(u,.)\|_{L^1}  \Bigg\} \\ 
&= \left(1 + d\sqrt{\frac 2 \pi}\right) + \sqrt{\theta}d\sqrt{\frac 2 \pi}\Bigg\{ B\left(2\sup_{u\in (0,\theta]}\|q(u,.)\|_{L^1} + \pi \sup_{u\in (0,\theta]}\sqrt{u}\|\nabla q(u,.)\|_{L^1}  \right) \\
    &\phantom{\left(1 + d\sqrt{\frac 2 \pi}\right) + \sqrt{\theta}\bigg\{ 2d\sqrt{\frac 2 \pi}\Big(B + }+ \check{C} \sup_{u\in [0,T]}\|\nabla \cdot b(u,.) \|_{L^{\rho}}\left(2\sqrt{T}\sup_{u\in (0,\theta]}\|q(u,.)\|_{L^1} + \pi \sup_{u\in (0,\theta]}\sqrt{u}\|\nabla q(u,.)\|_{L^1}  \right)  \Bigg\}\\
    &\le \left(1 + d\sqrt{\frac 2 \pi}\right) + d\sqrt{2 \pi \theta}\left(B + \max(1,\sqrt{T})\check{C} \sup_{u\in [0,T]}\|\nabla \cdot b(u,.) \|_{L^{\rho}} \right) \left(\sup_{u\in (0,\theta]}\|q(u,.)\|_{L^1} + \sup_{u\in (0,\theta]}\sqrt{u}\|\nabla q(u,.)\|_{L^1} \right).
\end{align*} 
Hence, $\displaystyle \opnorm{\Phi(q)} \le \left(1 + d\sqrt{\frac 2 \pi}\right) + d\sqrt{2 \pi \theta}\left(B + \max(1,\sqrt{T})\check{C} \sup_{u\in [0,T]}\|\nabla \cdot b(u,.) \|_{L^{\rho}} \right)\opnorm{q}$ and $\opnorm{\Phi(q)} <+\infty$.
Now, let $0 <r\le s\le \theta$, adapting the proof of Lemma \ref{evoldens} and using Inequality \eqref{normMaj} from Lemma \ref{Sobol}, we obtain that:
\begin{align*}
  \displaystyle &\left\|\nabla \Phi_s(q) - \nabla \Phi_r(q) \right\|_{L^1} \\
  &\le d(2d+3)\sqrt{\frac 2 \pi}\left(\frac{1}{\sqrt{r}} - \frac{1}{\sqrt s} \right) + 4d(d+2)\sqrt{\frac 2 \pi}\left(B + \check{C}\sup_{u \in [0,T]}\|\nabla \cdot b(u,.) \|_{L^{\rho}} \right)\sup_{u \in [r,s]}\|q(u,.)\|_{W^{1,1}}\sqrt{s-r}.
\end{align*}
Since $\opnorm{\Phi(q)} = \sup_{u \in (0,\theta]}\|q(u,.)\|_{L^1} + \sup_{u \in (0,\theta]}\sqrt{u}\|\nabla q(u,.)\|_{L^1}$ and using Inequality \eqref{timeCtyPhi}, we can conclude that $t\mapsto \Phi_t(q)$ is continuous on $(0,\theta]$ with values in $W^{1,1}\left(\R^d\right)$. Hence, $\Phi(q) \in \mathcal{\tilde C}\left((0,\theta],W^{1,1}\left(\R^d\right)\right)$. \\
Now, let $q,\tilde q \in \mathcal{\tilde C} \left((0,\theta], W^{1,1}\left(\R^d \right)\right)$, we obtain, with the same reasoning above, that $$\displaystyle \opnorm{\Phi(q) - \Phi(\tilde q)} \le d\sqrt{2 \pi \theta}\left(B + \max(1,\sqrt{T})\check{C} \sup_{u\in [0,T]}\|\nabla \cdot b(u,.) \|_{L^{\rho}} \right)\opnorm{q-\tilde q}.$$
If: 
$$\displaystyle \theta < \frac{1}{2\pi d^2 \left(B + \max(1,\sqrt{T})\check{C} \sup_{u\in [0,T]}\|\nabla \cdot b(u,.) \|_{L^{\rho}} \right)^2} =: 2\theta_0$$ 
then the map $\Phi$ is a contraction on the space $\mathcal{\tilde C} \left((0,\theta_0], W^{1,1}\left(\R^d \right) \right)$. Using Picard's Theorem, the map $\Phi$ admits then a unique fixed-point on the space $\mathcal{\tilde C} \left((0,\theta_0], W^{1,1}\left(\R^d \right)\right)$.\\

$\bullet$ Now, we assume that $m$ admits a density w.r.t. the Lebesgue measure in $W^{1,1}\left(\R^d\right)$. Let us prove that the map $\Phi$ admits a unique-fixed point in $\mathcal{C}\left([0,\theta],W^{1,1}\left(\R^d\right)\right)$ for all $\theta \in (0,T]$:\\

 Let $t \in (0,\theta]$ and $q \in \mathcal{C}\left([0,\theta],W^{1,1}\left(\R^d\right)\right)$ for all $\theta \in (0,T]$. Using Equation \eqref{gradPhi} and the fact that $m$ admits a density w.r.t. the Lebesgue measure in $W^{1,1}\left(\R^d\right)$, we obtain:
\begin{align*}
  \displaystyle \nabla \Phi_t(q) =  G_t* \nabla m - \int_0^t \nabla G_{t-s}*\nabla \cdot \Big(b(s,.)q(s,.) \Big)\,ds.
\end{align*}
Using the estimate \eqref{FirstDerivG} from Lemma \ref{EstimHeatEq} and Inequality \eqref{normMaj} from Lemma \ref{Sobol}, we have:
\begin{align*}
\displaystyle &\left\|\Phi_t(q)\right\|_{W^{1,1}} \le \left\|G_t*m\right\|_{W^{1,1}} + d\sqrt{\frac 2 \pi}\int_0^t \frac{1}{\sqrt{t-s}} \left(B\left\|q(s,.)\right\|_{L^1} + \left\|\nabla. \Big(b(s,.)q(s,.)\Big)\right\|_{L^1} \right)\,ds \\
&\le \left(1 + \sum_{i=1}^d\left\|\partial_{x_i} m\right\|_{L^1} \right) + d\sqrt{\frac 2 \pi}\int_0^t \frac{1}{\sqrt{t-s}} \Big(B\left\|q(s,.)\right\|_{L^1} + \check{C}\|\nabla \cdot b(s,.) \|_{L^{\rho}}\left\|q(s,.)\right\|_{W^{1,1}} + B \left\|\nabla q(s,.) \right\|_{L^1} \Big)\,ds
\end{align*}

\begin{align*}
  &\le \left(1 + \sum_{i=1}^d\left\|\partial_{x_i} m\right\|_{L^1} \right) + d\sqrt{\frac 2 \pi}\int_0^t \frac{1}{\sqrt{t-s}} \Big(B\left\|q(s,.)\right\|_{L^1} + \check{C}\|\nabla \cdot b(s,.) \|_{L^{\rho}}\left\|q(s,.)\right\|_{W^{1,1}} + B \left\|\nabla q(s,.) \right\|_{L^1} \Big)\,ds\\
  &\le \left(1 + \sum_{i=1}^d\left\|\partial_{x_i} m\right\|_{L^1} \right) + d\sqrt{\frac 2 \pi} \left(B + \check{C}\sup_{u \in [0,T]}\|\nabla \cdot b(u,.) \|_{L^{\rho}} \right)\int_0^t \frac{1}{\sqrt{t-s}}\left\|q(s,.)\right\|_{W^{1,1}}\,ds \\
  &\le \left(1 + \sum_{i=1}^d\left\|\partial_{x_i} m\right\|_{L^1} \right) + 2d\sqrt{\frac{ 2 t}{\pi}} \left(B + \check{C}\sup_{u \in [0,T]}\|\nabla \cdot b(u,.) \|_{L^{\rho}} \right)\sup_{u \in [0,\theta]}\left\|q(u,.)\right\|_{W^{1,1}}.
\end{align*}
Hence, $\displaystyle \sup_{t \in [0,\theta]}\left\|\Phi_t(q)\right\|_{W^{1,1}} \le \left(1 + \sum_{i=1}^d\left\|\partial_{x_i} m\right\|_{L^1} \right) + 2d\sqrt{\frac{ 2 \theta}{\pi}} \left(B + \check{C}\sup_{u \in [0,T]}\|\nabla \cdot b(u,.) \|_{L^{\rho}} \right)\sup_{t \in [0,\theta]}\left\|q(t,.)\right\|_{W^{1,1}}$ is finite.
Now, concerning the continuity of $t \mapsto \Phi_t(q)$ on $[0,\theta]$ with values in $W^{1,1}\left(\R^d\right)$, we already proved above the continuity on $(0,\theta]$. As for the continuity at $t=0$, we denote by $\tau_y w$ the translation of $w \in L^1\left(\R^d\right)$ by $y \in \R^d$ defined by $\tau_y w(x) = w(x -y)$ for $x \in \R^d$. We have:
 \begin{align*}
  \displaystyle \left\|G_t*w - w\right\|_{L^1} &= \int_{\R^d}\left|\int_{\R^d}G_t(y)w(x-y)\,dy - w(x)\right|\,dx = \int_{\R^d}\left|\int_{\R^d}G_1(y)w(x-\sqrt{t}y)\,dy - w(x)\right|\,dx \\
  &\le \int_{\R^d}G_1(y)\left\|\tau_{\sqrt{t}y}w-w\right\|_{L^1}\,dy.
 \end{align*}
Using Lemma $IV.4$ \cite{Brezis}, we have that $\left\|\tau_{\sqrt{t}y}w-w\right\|_{L^1} \to 0$ when $t \to 0$ and since $\left\|\tau_{\sqrt{t}y}w-w\right\|_{L^1} \le 2 \left\|w\right\|_{L^1}$, by dominated convergence, we obtain that $G_t*w \to w$ when $t \to 0$ in $L^1\left(\R^d\right)$. We replace $w$ by $m$ and $\nabla m$ since $m$ admits a density in $W^{1,1}\left(\R^d\right)$ and conclude that $\left\|G_t*m -m \right\|_{W^{1,1}} \to 0$ when $t \to 0$. Moreover, we have that:
\begin{align*}
  \displaystyle \left\|\int_0^t G_{t-s} * \nabla \cdot \Big(b(s,.)q(s,.)\Big)\,ds \right\|_{W^{1,1}} &\le \left(B + \check{C}\sup_{u \in [0,T]}\|\nabla \cdot b(u,.) \|_{L^{\rho}} \right) \sup_{u \in [0,\theta]}\left\|q(u,.)\right\|_{W^{1,1}}\, \left(t + d\sqrt{\frac 2 \pi}\sqrt{t} \right) 
\end{align*}
which converges to $0$ when $t \to 0$. Hence, $\displaystyle \left\|\Phi_t(q) - m \right\|_{W^{1,1}}$ converges to $0$ when $t \to 0$ and $\Phi \in \mathcal{C}\left([0,\theta],W^{1,1}\left(\R^d\right)\right)$.\\
Now, let $q,\tilde q \in \mathcal{C} \left([0,\theta], W^{1,1}\left(\R^d \right)\right)$. With the same reasoning above, we obtain that:
\begin{align}\label{machin}
    \displaystyle \left\|\Phi_t(q) - \Phi_t(\tilde q)\right\|_{W^{1,1}} &\le d\sqrt{\frac 2 \pi} \left(B + \check{C}\sup_{u \in [0,T]}\|\nabla \cdot b(u,.) \|_{L^{\rho}} \right)\int_0^t \frac{1}{\sqrt{t-s}}\left\|q(s,.) - \tilde q(s,.)\right\|_{W^{1,1}}\,ds.
\end{align} 
As done in the proof of Lemma \ref{uniqFP}, for $n \in \N^{*}$, we iterate Inequality \eqref{machin} $2n$-times and deduce that for $n$ big enough, $\Phi^{2n}$ is a contraction on $\mathcal{C}\left([0,\theta],W^{1,1}\left(\R^d\right)\right)$. We conclude, through Picard's Theorem, the existence of a unique fixed-point of the map $\Phi$ on $\mathcal{C}\left([0,\theta],W^{1,1}\left(\R^d\right)\right)$.
\end{proof}
We are now ready to prove Proposition \ref{hyp}.
\begin{proof}
We are going to suppose that $\,\sup_{t \in [0,T]} \|\nabla \cdot b(t,.)\|_{L^\rho}<+\infty$ for some $\rho \in [d,+\infty]$. When $d=1$, $\sup_{t \in [0,T]}\left\|\partial_x b(t,.)\right\|_{\rm TV}<+\infty$, the proof is analogous and the estimations remain valid when replacing $\left\|\nabla \cdot b(t,.) \right\|_{L^{\rho}}$ by $\left\|\partial_x b(t,.)\right\|_{\rm TV}$.\\

The proof is an immediate consequence of Theorem \ref{sobolP} and Lemma \ref{Sobol}. Indeed, assuming the regularity on $b$ w.r.t. the space variables, we have from Theorem \ref{sobolP} that $\left(p(t,.) \right)_{t \in (0,T]} \in \mathcal{\tilde C}\left((0,T],W^{1,1}\left(\R^d\right)\right)$. We can then apply Lemma \ref{Sobol} to obtain, for $t \in (0,T]$, that:
\begin{align*}
  \displaystyle \sqrt{t}\left\|\nabla \cdot \Big(b(t,.)p(t,.) \Big) \right\|_{\rm TV} &\le \left( B+ \check{C}\sup_{u \in [0,T]}\|\nabla \cdot b(u,.) \|_{L^{\rho}} \right)\sqrt{t} \left\| \nabla p(t,.) \right\|_{L^1} + \check{C} \sqrt{T}\sup_{u \in [0,T]}\|\nabla \cdot b(u,.) \|_{L^{\rho}}\left\|p(t,.) \right\|_{L^1} \\
  &\le \max\left(B+ \check{C}\sup_{u \in [0,T]}\|\nabla \cdot b(u,.) \|_{L^{\rho}}, \check{C} \sqrt{T}\sup_{u \in [0,T]}\|\nabla \cdot b(u,.) \|_{L^{\rho}}\right)\opnorm{p} <+\infty.
\end{align*} 
The conclusion holds with $M = \max\left(B+ \check{C}\sup_{u \in [0,T]}\|\nabla \cdot b(u,.) \|_{L^{\rho}}, \check{C} \sqrt{T}\sup_{u \in [0,T]}\|\nabla \cdot b(u,.) \|_{L^{\rho}}\right)\opnorm{p}$. Since $p \in \mathcal{\tilde C}\left((0,T],W^{1,1}\left(\R^d\right)\right)$, using Lemma \ref{Sobol}, we have as in Equality \eqref{nablaq} that:
$$ \displaystyle \forall t \in (0,T], \quad p(t,.) = G_t * m - \int_0^t G_{t-s}*\nabla \cdot \Big(b(s,.)p(s,.)\Big)\,ds.$$
Moreover, when $m$ admits a density w.r.t. the Lebesgue measure in $W^{1,1}\left(\R^d\right)$, according to Theorem \ref{sobolP}, $\left(p(t,.) \right)_{t \in [0,T]} \in \mathcal{C}\left([0,T],W^{1,1}\left(\R^d\right)\right)$. Therefore, using once again Lemma \ref{Sobol}, we obtain for $t \in [0,T]$ that:
\begin{align*}
  \displaystyle \left\|\nabla \cdot \Big(b(t,.)p(t,.) \Big) \right\|_{\rm TV} &\le \left( B+ \check{C}\sup_{u \in [0,T]}\|\nabla \cdot b(u,.) \|_{L^{\rho}} \right)\sup_{u \in [0,T]}\left\| p(u,.) \right\|_{W^{1,1}} <+\infty.
\end{align*} 
The conclusion holds with $\tilde M =  \left( B+ \check{C}\sup_{u \in [0,T]}\|\nabla \cdot b(u,.) \|_{L^{\rho}} \right)\sup_{u \in [0,T]}\left\| p(u,.) \right\|_{W^{1,1}}$.
\end{proof}


\subsection{Proof of Theorem \ref{cvgenceB}}\label{demoo}

We first use Inequality \eqref{hyp1} from Proposition \ref{hyp} to obtain a stronger regularity of $p(t,.)$ with respect to the time variable. 
\begin{lem}\label{evoldensB}
Assume Inequality \eqref{hyp1}. We have:
   $$\displaystyle \exists \, \tilde Q<+\infty, \forall \, 0 < r\le s\le T,\quad \left\|p(s,.) - p(r,.) \right\|_{L^1} \le \tilde Q\bigg( \ln(s/r) + \frac{s-r}{2\sqrt r} \ln\left(\frac{4s}{s-r} \right) +\left(\sqrt{s} - \sqrt{r}\right) \bigg).$$
\end{lem}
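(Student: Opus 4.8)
The plan is to re-run the proof of Lemma~\ref{evoldens}, now cashing in the extra regularity provided by Inequality~\eqref{hyp1}. Under \eqref{hyp1}, for every $u\in(0,T]$ the divergence $\nu_u:=\nabla\cdot\big(b(u,.)p(u,.)\big)$ is a bounded signed measure with $\|\nu_u\|_{\rm TV}\le M/\sqrt u$; hence $\nabla G_{t-u}*\big(b(u,.)p(u,.)\big)=G_{t-u}*\nu_u$, and the mild equation \eqref{mildf} rewrites as $p(t,.)=G_t*m-\int_0^t G_{t-u}*\nu_u\,du$ (this is \eqref{nablaP}). Subtracting this identity at times $s$ and $r$ and splitting the time integral at $r$, I would write
\begin{align*}
  p(s,.)-p(r,.)&=\mathrm{I}-\mathrm{II}-\mathrm{III},\qquad\text{with}\\
  \mathrm{I}&=(G_s-G_r)*m,\qquad \mathrm{II}=\int_0^r(G_{s-u}-G_{r-u})*\nu_u\,du,\qquad \mathrm{III}=\int_r^s G_{s-u}*\nu_u\,du,
\end{align*}
and estimate the three pieces separately, each producing one of the three terms in the claim.

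For $\mathrm{I}$, writing $G_s-G_r=\int_r^s\partial_\theta G_\theta\,d\theta$ and using $\|\partial_\theta G_\theta\|_{L^1}\le d/\theta$ (a consequence of \eqref{timeDerivG}, already used in the proof of Lemma~\ref{evoldens}) together with $\|m\|_{\rm TV}=1$ gives $\|\mathrm{I}\|_{L^1}\le\int_r^s\|\partial_\theta G_\theta\|_{L^1}\,d\theta\le d\ln(s/r)$. For $\mathrm{III}$, the improvement over Lemma~\ref{evoldens} is that no gradient is left on the heat kernel: $\|G_{s-u}*\nu_u\|_{L^1}\le\|G_{s-u}\|_{L^1}\|\nu_u\|_{\rm TV}=\|\nu_u\|_{\rm TV}\le M/\sqrt u$, so $\|\mathrm{III}\|_{L^1}\le\int_r^s M/\sqrt u\,du=2M(\sqrt s-\sqrt r)$.

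The real work is in $\mathrm{II}$. Writing $G_{s-u}-G_{r-u}=\int_{r-u}^{s-u}\partial_\theta G_\theta\,d\theta$ and using once more $\|\partial_\theta G_\theta\|_{L^1}\le d/\theta$ and $\|\nu_u\|_{\rm TV}\le M/\sqrt u$ yields $\|\mathrm{II}\|_{L^1}\le dM\,J$ with $J:=\int_0^r u^{-1/2}\ln\big(\tfrac{s-u}{r-u}\big)\,du$. To control $J$ I would \emph{not} bound the logarithm crudely (the bound $\ln(1+x)\le x$ loses too much near $u=r$, where the integrand is only logarithmically singular, hence still integrable against $u^{-1/2}$), but instead use $\ln\tfrac{s-u}{r-u}=\ln\tfrac sr+\int_0^u\tfrac{s-r}{(r-w)(s-w)}\,dw$, interchange the order of integration, and use $\int_w^r u^{-1/2}\,du=2(\sqrt r-\sqrt w)$, $\tfrac{\sqrt r-\sqrt w}{r-w}=\tfrac1{\sqrt r+\sqrt w}\le\tfrac1{\sqrt r}$ and $\int_0^r\tfrac{dw}{s-w}=\ln\tfrac{s}{s-r}$, to get
\begin{align*}
  J\le 2\sqrt r\ln(s/r)+\frac{2(s-r)}{\sqrt r}\ln\frac{s}{s-r}.
\end{align*}
(The same bound follows from Fubini on the $(\theta,u)$-double integral followed by Lemma~\ref{integ} applied on the near-diagonal range $(s-r,s)$.) Then $\sqrt r\ln(s/r)=\tfrac{r}{\sqrt r}\ln(1+\tfrac{s-r}{r})\le\tfrac{s-r}{\sqrt r}$, and since $\ln\tfrac{s}{s-r}\le\ln\tfrac{4s}{s-r}$ and $\ln\tfrac{4s}{s-r}\ge\ln 4>1$, everything is absorbed into a fixed multiple of $\tfrac{s-r}{2\sqrt r}\ln\tfrac{4s}{s-r}$.

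Collecting the three bounds and taking $\tilde Q$ large enough (e.g. $\tilde Q=\max\big(d,\,4dM(1+1/\ln 4),\,2M\big)$) gives the statement; the case $r=s$ is trivial. The only genuinely delicate point is the estimate of $J$: the payoff over Lemma~\ref{evoldens} comes from replacing the singularity $\theta^{-3/2}$ of $\|\partial_\theta\nabla G_\theta\|_{L^1}$ by the milder $\theta^{-1}$ of $\|\partial_\theta G_\theta\|_{L^1}$, at the price of an extra factor $1/\sqrt u$; this factor must be tracked carefully — and the logarithmic singularity at $u=r$ handled without overkill — to land exactly on $\tfrac{s-r}{\sqrt r}\ln\tfrac{4s}{s-r}$ rather than on a cruder $\sqrt r$-type bound that would fail to vanish as $s\downarrow r$.
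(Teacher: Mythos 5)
Your proof is correct and follows essentially the same route as the paper: the same decomposition of $p(s,.)-p(r,.)$ via \eqref{nablaP}, the same kernel bounds $\left\|\partial_\theta G_\theta\right\|_{L^1}\le d/\theta$, $\left\|\partial_{x_i}G_\theta\right\|_{L^1}$-free handling of the last piece, and the same use of $\left\|\nabla\cdot\big(b(u,.)p(u,.)\big)\right\|_{\rm TV}\le M/\sqrt u$, producing the three terms $d\ln(s/r)$, $dM\,J$ and $2M(\sqrt s-\sqrt r)$. The only difference is the elementary integral $J=\int_0^r u^{-1/2}\ln\frac{s-u}{r-u}\,du$: the paper evaluates it exactly via the substitution $\theta=\sqrt u$ (Lemma \ref{intg}) and then bounds the closed form with $\ln(1+x)\le x$, whereas you bound it by differentiating the logarithm in $u$ and applying Fubini, absorbing the leftover terms with $\ln\frac{4s}{s-r}\ge\ln 4$; both yield the stated estimate with comparable constants.
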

\begin{proof}
  We will adapt the proof of Lemma \ref{evoldens}. Let $0 < r \le s \le T$. Using Equality \eqref{nablaP} from Proposition \ref{hyp}, we have that:
  \begin{align}\label{diffP}
  \displaystyle p(s,.) - p(r,.) = \left( G_s - G_r \right) * m - \int_0^s  G_{s-u} * \nabla \cdot \Big(b(u,.)p(u,.)\Big)\,du + \int_0^r  G_{r-u} * \nabla \cdot \Big(b(u,.)p(u,.)\Big)\,du.
  \end{align}
  Therefore, using the estimates \eqref{timeDerivG} and \eqref{FirstDerivG} from Lemma \ref{EstimHeatEq}, the fact that $\ln(1+x) \le x, \; \forall x >0$ and Lemma \ref{intg}, we obtain:

  \begin{align*}
  \displaystyle &\left\|p(s,.) - p(r,.) \right\|_{L^1} \\
  &\le \left\|\left( G_s - G_r \right) * m \right\|_{L^1} + \left\|\int_0^r \left( G_{s-u} - G_{r-u} \right)*\nabla \cdot \Big(b(u,.)p(u,.)\Big)\,du \right\|_{L^1} + \left\|\int_r^s G_{s-u} * \nabla \cdot \Big(b(u,.)p(u,.)\Big)\,du \right\|_{L^1} \\
  &\le \int_r^s \left\|\partial_u G_u \right\|_{L^1} \,du +\int_0^s \int_{r-u}^{s-u} \left\|\partial_{\theta} G_{\theta} \right\|_{L^1} \left\|\nabla \cdot \Big(b(u,.)p(u,.) \Big) \right\|_{\rm TV}\,d\theta\,du  + M \int_r^s \frac{du}{\sqrt u}\\
  &\le d\ln(s/r) + dM \int_0^r \int_{r-u}^{s-u} \frac{d\theta}{\theta \sqrt{u}}\,du  + 2M \left(\sqrt{s} - \sqrt{r} \right) \\
  &= d\ln(s/r) + 2dM \left(\frac{s-r}{\sqrt s + \sqrt r} \ln\left(\frac{\left(\sqrt s + \sqrt r \right)^2}{s-r} \right) + 2\sqrt r \ln\left(1 + \frac{\sqrt s - \sqrt r}{2 \sqrt r} \right) \right) + 2M \left(\sqrt{s} - \sqrt{r}\right)\\
  &\le d\ln(s/r) + dM \; \frac{s-r}{\sqrt r} \ln\left(\frac{4s}{s-r} \right) + 2M(1+d)\left(\sqrt{s} - \sqrt{r}\right).
  \end{align*}
  The conclusion holds with $\tilde Q = \max\left(d,2M(1+d)\right)$.
\end{proof}
We are now ready to prove Theorem \ref{cvgenceB}. Once again, using Equality \eqref{caract} and Proposition \ref{propspde}, to prove the theorem amounts to prove that:
$$ \exists \, \tilde{C}<+\infty, \forall h \in (0,T], \, \forall k \in \left\llbracket 1, \left\lfloor\frac{T}{h}\right\rfloor \right\rrbracket, \quad \left\| p(kh,.) - p^h(kh,.)\right\|_{L^1} \le \frac{\tilde C}{\sqrt{kh}}\left(1 + \ln\left(k\right) \right)h. $$
For $h \in (0,T], \, k  \in \left\llbracket 1, \left\lfloor\frac{T}{h}\right\rfloor \right\rrbracket$, we recall Inequality \eqref{firstinteg}:
\begin{align*}
  \displaystyle \Big\|p(kh,.)-p^h\left(kh,.\right)\Big\|_{L^1} &\le \left\|V^h_1(k,.)\right\|_{L^1}+ \left\|V^h_2(k,.)\right\|_{L^1} + \sqrt{\frac 2 \pi}dB\sum_{j=1}^{k-1} \frac{h}{\sqrt{kh - jh}}\left\|p(jh,.) - p^h(jh,.) \right\|_{L^1} \\
  &+ 2dB^2 \left(1 + \frac{d-1}{\pi}\right) \left(\frac{1}{2} + \ln\left(k \right) \right)h + \sqrt{\frac{2}{\pi}} dB  \frac{3h}{\sqrt{kh}}.
\end{align*}
Let us estimate $\left\|V^h_1(k,.)\right\|_{L^1}$ and $\left\|V^h_2(k,.)\right\|_{L^1}$ for $k \ge 2$ by taking advantage of the additional regularity of $b$ and using Equality \eqref{nablaP} from Proposition \ref{hypH}:\\ \\
$\bullet$ We use Lemma \ref{Sobol} and the additional regularity of $b$ to transfer the gradient from $G$ to $bp$ and rewrite $V_1^h(k,.)$ as: 

\begin{align}\label{v1}
  \displaystyle &V_1^h(k,.) = \int_{h}^{kh}\bigg(G_{kh-s}- G_{kh-\tau^h_s} \bigg)* \nabla \cdot \Big(b(s,.)p(s,.) \Big)\,ds \nonumber \\
  &= \int_h^{(k-1)h}\left(\int_{kh-s}^{kh-\tau^h_s}\partial_u G_u \,du\right)*\nabla \cdot \Big(b(s,.)p(s,.) \Big)\,ds + \int_{(k-1)h}^{kh}\bigg(G_{kh-s}- G_{kh-\tau^h_s} \bigg)* \nabla \cdot \Big(b(s,.)p(s,.) \Big)\,ds.
\end{align}
Therefore, using Estimate \eqref{timeDerivG} from Lemma \ref{EstimHeatEq}, Inequality \eqref{hyp1} from Proposition \ref{hyp}, Inequality \eqref{integLog} and Lemma \ref{integ}, we obtain:
\begin{align*}
\displaystyle \left\|V_1^h(k,.)\right\|_{L^1} &\le dM \int_{h}^{(k-1)h}\frac{1}{\sqrt s} \ln\left(1 + \frac{s-\tau^h_s}{kh -s}\right)\,ds + 2M \int_{(k-1)h}^{kh}\frac{ds}{\sqrt s}\\
&\le dM \int_{h}^{(k-1)h} \frac{h}{(kh-s)\sqrt s}\,ds + 4M \left(\sqrt{kh}-\sqrt{(k-1)h} \right) \\
&\le dM\frac{h}{\sqrt{kh}}\ln\left(4k\right) + 4M \frac{h}{\sqrt{kh} + \sqrt{(k-1)h}} \\
&\le \frac{2M\left( 2 + d\ln(2)\right)}{\sqrt{kh}}\left(1 + \ln\left(k\right)\right)h.
\end{align*}
$\bullet$ Using Lemma \ref{evoldensB} and the estimate \eqref{FirstDerivG} from Lemma \ref{EstimHeatEq}, we have:
\begin{align*}
\displaystyle \left\|V_2^h(k,.)  \right\|_{L^1} &\le \sqrt{\frac 2 \pi}dB\tilde Q \Bigg\{ \int_{h}^{kh} \frac{\ln(s/\tau^h_s)}{\sqrt{kh - \tau^h_s}}\,ds + \int_{h}^{kh} \frac{s-\tau^h_s}{2\sqrt{\tau^h_s}\sqrt{kh - \tau^h_s}} \ln\left(\frac{4s}{s-\tau^h_s} \right)\,ds + \int_{h}^{kh}\frac{\sqrt{s} - \sqrt{\tau^h_s}}{\sqrt{kh - \tau^h_s}} \,ds \Bigg\}.
\end{align*}
We use Inequality \eqref{integLog}, the fact that $\displaystyle \sup_{0<x\le h}\left( x\ln\left(\frac{4kh}{x}\right)\right) $ is attained for $x=h$ since $h \le kh$ and Lemma \ref{sumPi} to obtain:
\begin{align*}
\displaystyle \left\|V_2^h(k,.)  \right\|_{L^1} &\le \sqrt{\frac 2 \pi} dB\tilde Q\left\{ \frac{2h}{\sqrt{kh}}\ln(4k) + h \ln\left( 4k\right)\int_{h}^{kh} \frac{ds}{2\sqrt{\tau^h_s}\sqrt{kh - \tau^h_s}} + \int_{h}^{kh} \frac{s -\tau^h_s}{\left(\sqrt{s}+\sqrt{\tau^h_s} \right)\sqrt{kh - \tau^h_s}} \,ds \right\}\\
&\le \sqrt{\frac 2 \pi}dB\tilde Q \left\{\frac{2h}{\sqrt{kh}}\ln(4k) + \frac{h}{2} \ln\left(4k\right) \sum_{j=1}^{k-1} \frac{1}{\sqrt{j}\sqrt{k-j}} + \frac{h}{2} \sum_{j=1}^{k-1} \frac{1}{\sqrt{j}\sqrt{k-j}} \right\} \\
&\le \sqrt{\frac 2 \pi}dB\tilde Q\left\{\frac{2}{\sqrt{kh}}\ln\left(4k\right) + \frac{\pi}{2}\Big(1 +\ln\left(4k\right) \Big) \right\}h.
\end{align*}
Therefore, 
\begin{align*}
  \displaystyle \left\|p(kh,.) - p^h(kh,.) \right\|_{L^1} &\le \frac{\tilde L}{\sqrt{kh}}\Big(1 + \ln\left(k\right)\Big)h+ \sqrt{\frac{2}{\pi}}dB \sum_{j=1}^{k-1} \frac{h}{\sqrt{kh - jh}}\left\|p(jh,.) - p^h(jh,.) \right\|_{L^1}
\end{align*}
where $\displaystyle \tilde L = 2 M\Big(2+d\ln(2)\Big) + dB \left\{\sqrt{\frac 2 \pi}\left(3 + 4\ln(2)\tilde Q \right) + \left(2B\left(1+\frac{d-1}{\pi} \right)+ \sqrt{\frac{\pi}{2}}\Big(1+2\ln(2)\Big)\tilde Q \right)\sqrt{T} \right\}$. Iterating this inequality, using the fact that for $j \le k, \ln(j) \le \ln(k)$ and using Lemma \ref{sumPi}, we obtain:
\begin{align*}
\displaystyle &\left\|p(kh,.) - p^h(kh,.) \right\|_{L^1} \\
  &\le \frac{\tilde L}{\sqrt{kh}}\Big(1 + \ln\left(k\right)\Big)h+ \sqrt{\frac{2}{\pi}}dB \sum_{j=1}^{k-1} \frac{h}{\sqrt{kh - jh}}\left(\frac{\tilde L}{\sqrt{jh}}\Big(1 + \ln\left(j\right)\Big)h+ \sqrt{\frac{2}{\pi}}dB \sum_{l=1}^{j-1} \frac{h}{\sqrt{jh - lh}}\left\|p(lh,.) - p^h(lh,.) \right\|_{L^1} \right) 
\end{align*}

\begin{align*}
  &\le \tilde{L}\left(\frac{1}{\sqrt{kh}} + \sqrt{\frac 2 \pi}dB \sum_{j=1}^{k-1}\frac{h}{\sqrt{kh-jh}\sqrt{jh}} \right)\Big(1 + \ln\left(k\right)\Big)h+ 2d^2B^2 h\sum_{j=1}^{k-1}\left\|p\left(jh,.\right) - p^h\left(jh,.\right) \right\|_{L^1}\\
&\le \tilde L\sqrt{h}\left(1 + dB\sqrt{2\pi T }\right)\frac{1 + \ln\left(k\right)}{\sqrt{k}}+ 2d^2B^2 h\sum_{j=1}^{k-1}\left\|p\left(jh,.\right) - p^h\left(jh,.\right) \right\|_{L^1}.
\end{align*}
We apply Lemma \ref{Gronw} and obtain that:
\begin{align*}
\displaystyle &\left\|p(kh,.) - p^h(kh,.) \right\|_{L^1} \\
&\le \tilde L\sqrt{h}\left(1 + dB\sqrt{2\pi T }\right)\frac{1 + \ln\left(k\right)}{\sqrt{k}} + 2d^2B^2 \tilde L\sqrt{h}\left(1 + dB\sqrt{2\pi T }\right) h \sum_{j=1}^{k-1}\frac{1+\ln(j)}{\sqrt{j}}\exp\Big(2d^2B^2\left(kh -(j+1)h\right)\Big).
\end{align*}
Now, using once again the fact that for $j \le k, \ln(j) \le \ln(k)$, we have:
\begin{align*}
  \displaystyle \sqrt{h}\sum_{j=1}^{k-1}\frac{1+\ln(j)}{\sqrt{j}}\exp\Big(2d^2B^2\left(kh -(j+1)h\right)\Big) &\le (1+\ln(k))\exp\Big(2d^2B^2 T \Big) \int_h^{kh}\frac{ds}{\sqrt{s}} \\
  &= 2(1+\ln(k))\exp\Big(2d^2B^2 T \Big)\sqrt{T}.
\end{align*}
Therefore, we deduce that:
$$ \displaystyle  \forall h \in (0,T], \forall k \in \left \llbracket 1, \left \lfloor \frac{T}{h}\right \rfloor \right \rrbracket , \quad \left\|p(kh,.) - p^h(kh,.) \right\|_{L^1} \le \frac{\tilde C}{\sqrt{kh}}\Big(1 + \ln\left(k\right)\Big)h$$
where $\displaystyle \tilde C = \tilde L \left(1 + 4d^2B^2 T\left(1 + dB\sqrt{2\pi T }\right) \exp\Big(2d^2B^2T \Big)\right)$.


\subsection{Proof of Proposition \ref{propregM}}\label{demooo}

We first use Inequality \eqref{hyp2} from Proposition \ref{hyp} to obtain a stronger regularity of $p(t,.)$ with respect to the time variable. 

\begin{lem}\label{evoldensM}
Assume $b:[0,T]\times\R^d \to \R^d$ is measurable and bounded by $B<+\infty$ such that $\,\sup_{t \in [0,T]} \|\nabla \cdot b(t,.)\|_{L^\rho}<+\infty$ for some $\rho \in [d,+\infty]$ or that for $d=1$, $\sup_{t \in [0,T]}\left\|\partial_x b(t,.)\right\|_{\rm TV}<+\infty$; where $\nabla \cdot b(t,.)$ and $\partial_x b(t,.)$ are respectively the spatial divergence and the spatial derivative of $b$ in the sense of distributions.
Moreover, assume that $m$ admits a density w.r.t. the Lebesgue measure that belongs to $W^{1,1}\left(\R^d\right)$. We have:
   $$\displaystyle \exists \, \hat Q<+\infty, \forall \, 0 < r\le s\le T,\quad \left\|p(s,.) - p(r,.) \right\|_{L^1} \le \hat Q \Bigg((\sqrt s - \sqrt r)+ (s-r)\ln\left(\frac{s}{s-r}\right) + r\ln\left(s/r\right) + (s-r)\Bigg).$$
\end{lem}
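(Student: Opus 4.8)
The plan is to follow the pattern of the proofs of Lemmas \ref{evoldens} and \ref{evoldensB}, now exploiting both the uniform bound \eqref{hyp2} of Proposition \ref{hyp} and the $W^{1,1}$ regularity of $m$. Note first that the hypotheses of Lemma \ref{evoldensM} are exactly those under which Proposition \ref{hyp} yields the mild identity \eqref{nablaP} and the bound $\sup_{u\in[0,T]}\|\nabla\cdot(b(u,.)p(u,.))\|_{\rm TV}\le\tilde M$. Starting from \eqref{nablaP}, I would write, for $0<r\le s\le T$,
\[
  p(s,.)-p(r,.) = (G_s-G_r)*m - \int_0^r\big(G_{s-u}-G_{r-u}\big)*\nabla\cdot\big(b(u,.)p(u,.)\big)\,du - \int_r^s G_{s-u}*\nabla\cdot\big(b(u,.)p(u,.)\big)\,du,
\]
and estimate the three terms separately.

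The last term is bounded by $\tilde M(s-r)$ since $\|G_{s-u}\|_{L^1}=1$, which produces the summand $(s-r)$. For the middle term, writing $G_{s-u}-G_{r-u}=\int_{r-u}^{s-u}\partial_\theta G_\theta\,d\theta$ and combining $\|\partial_\theta G_\theta\|_{L^1}=d/\theta$ from estimate \eqref{timeDerivG} with \eqref{hyp2}, I get a bound $d\tilde M\int_0^r\ln\frac{s-u}{r-u}\,du$; the substitution $v=r-u$ turns this elementary integral into $(s-r)\ln\frac{s}{s-r}+r\ln\frac{s}{r}$, accounting for the two logarithmic summands.

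The genuinely new step, and the one I expect to need the most care, concerns $(G_s-G_r)*m$: a crude use of $\|\partial_u G_u\|_{L^1}=d/u$ would give only $d\ln(s/r)$, as in Lemma \ref{evoldens}, which is worse than claimed. Instead I would use the heat equation $\partial_u G_u=\tfrac12\Delta G_u=\tfrac12\sum_{i=1}^d\partial^2_{x_i}G_u$ together with $m\in W^{1,1}(\R^d)$ to transfer one spatial derivative onto $m$, namely $(G_s-G_r)*m=\int_r^s(\partial_u G_u)*m\,du$ with $(\partial_u G_u)*m=\tfrac12\sum_{i=1}^d(\partial_{x_i}G_u)*(\partial_{x_i}m)$, so that estimate \eqref{FirstDerivG} gives $\|(\partial_u G_u)*m\|_{L^1}\le\tfrac12\sqrt{2/(\pi u)}\sum_{i=1}^d\|\partial_{x_i}m\|_{L^1}$; integrating $u^{-1/2}$ over $[r,s]$ yields a term proportional to $\sqrt s-\sqrt r$. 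Collecting the three estimates gives the statement, e.g. with $\hat Q=\max\big(\sqrt{2/\pi}\sum_{i=1}^d\|\partial_{x_i}m\|_{L^1},\,(d+1)\tilde M\big)$; the rest is routine bookkeeping with the heat-kernel bounds of Lemma \ref{EstimHeatEq}.
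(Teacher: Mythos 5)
Your proposal is correct and follows essentially the same route as the paper: the same decomposition via \eqref{diffP}, the bound $\tilde M(s-r)$ for the tail integral, the estimate $d\tilde M\int_0^r\ln\frac{s-u}{r-u}\,du=(s-r)\ln\frac{s}{s-r}+r\ln\frac{s}{r}$ (times $d\tilde M$) for the middle term, and the key step of using the heat equation together with $m\in W^{1,1}(\R^d)$ to write $(\partial_u G_u)*m=\tfrac12\sum_i(\partial_{x_i}G_u)*(\partial_{x_i}m)$ and gain the $\sqrt{s}-\sqrt{r}$ term via \eqref{FirstDerivG}. The only differences are cosmetic (your admissible constant $\hat Q$ is marginally different from the paper's), so nothing needs to be changed.
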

\begin{proof}
  We will adapt, once again, the proof of Lemma \ref{evoldens}. Let $0 < r \le s \le T$, using Equality \eqref{diffP}, the estimates \eqref{heateq}, \eqref{timeDerivG} and \eqref{FirstDerivG} from Lemma \ref{EstimHeatEq}, Inequality \eqref{hyp2} from Proposition \ref{hyp} and Lemma \ref{intg}, we obtain:
  \begin{align*}
  \displaystyle &\left\|p(s,.) - p(r,.) \right\|_{L^1} \\
  &\le \left\|\left( G_s - G_r \right) * m \right\|_{L^1} + \left\|\int_0^r \left( G_{s-u} - G_{r-u} \right)*\nabla \cdot \Big(b(u,.)p(u,.)\Big)\,du \right\|_{L^1} + \left\|\int_r^s G_{s-u} * \nabla \cdot \Big(b(u,.)p(u,.)\Big)\,du \right\|_{L^1} \\
  &\le \left\|\int_r^s \left(\partial_u G_u* m\right)\,du \right\|_{L^1} + \tilde M \int_0^s \int_{r-u}^{s-u} \left\|\partial_{\theta} G_{\theta} \right\|_{L^1}\,d\theta\,du  + \tilde M (s-r)\\
  &\le \frac{1}{2}\sum_{i=1}^d\|\partial_{x_i} m\|_{L^1}\int_r^s \left\| \partial_{x_i} G_{u}\right\|_{L^1}\,du + d \tilde M\int_0^r \ln\left(\frac{s-u}{r-u}\right)\,du  + \tilde M (s-r)\\
  &\le 2\sqrt{\frac 2 \pi} \sum_{i=1}^d\|\partial_{x_i} m\|_{L^1} \left(\sqrt{s} - \sqrt{r} \right) + d \tilde M \left((s-r)\ln\left(\frac{s}{s-r}\right) + r\ln\left(s/r\right) \right) +  \tilde M (s-r).
  \end{align*}
  The conclusion holds with $\displaystyle \hat Q = \max\left(2\sqrt{\frac 2 \pi} \sum_{i=1}^d\|\partial_{x_i} m\|_{L^1}, d\tilde M \right)$.
\end{proof}

We are now ready to prove Proposition \ref{propregM}. Once more, using Equality \eqref{caract} and Proposition \ref{propspde}, to prove the theorem amounts to prove that:
$$ \exists \, \hat{C}<+\infty, \forall h \in (0,T], \, \forall k \in \left\llbracket 1, \left\lfloor\frac{T}{h}\right\rfloor \right\rrbracket, \quad \left\| p(kh,.) - p^h(kh,.)\right\|_{L^1} \le \hat{C}\Big(1 + \ln\left(k\right) \Big)h. $$
For $h \in (0,T], \, k  \in \left\llbracket 1, \left\lfloor\frac{T}{h}\right\rfloor \right\rrbracket$, we recall Inequality \eqref{firstintegB}:
\begin{align*}
  \displaystyle \Big\|p(kh,.)-&p^h\left(kh,.\right)\Big\|_{L^1} \le \left\|V^h_1(k,.)\right\|_{L^1}+ \left\|V^h_2(k,.)\right\|_{L^1} + \sqrt{\frac 2 \pi}dB\sum_{j=1}^{k-1} \frac{h}{\sqrt{kh - jh}}\left\|p(jh,.) - p^h(jh,.) \right\|_{L^1}\\
  &+ 2dB^2 \left(1 + \frac{d-1}{\pi}\right) \left(\frac{1}{2} + \ln\left(k \right) \right)h + \left\| \int_0^{h} \bigg(\nabla G_{kh-s} * \Big(b(s,.)p(s,.)\Big) - \nabla G_{kh} * \Big(b(s,.)m\Big)\bigg)\,ds \right\|_{L^1}.
\end{align*}
Concerning the last term of the right-hand side of this previous inequality, we use Lemma \ref{Sobol} and the additional regularity of $b$ to transfer the gradient from $G$ to $bp$; and using Inequality \eqref{hyp2} from Proposition \ref{hyp}, we obtain that:
\begin{align*}
  \displaystyle \bigg\| \int_0^{h} \bigg(G_{kh-s} * \nabla \cdot \Big(b(s,.)p(s,.)\Big) &- G_{kh} * \nabla \cdot \Big(b(s,.)m\Big)\bigg)\,ds \bigg\|_{L^1} \\
  &\le \int_0^{h} \left(\left\|\nabla  \cdot \Big(b(s,.)p(s,.)\Big) \right\|_{\rm TV} + \left\|\nabla  \cdot \Big(b(s,.)m\Big) \right\|_{\rm TV}\right)\,ds \le 2\tilde M h.
\end{align*}
Let us estimate $\left\|V^h_1(k,.)\right\|_{L^1}$ and $\left\|V^h_2(k,.)\right\|_{L^1}$ for $k \ge 2$ by taking advantage of the additional regularity of $b$ and $m$, and using Equality \eqref{nablaP} from Proposition \ref{hypH}:\\ \\
$\bullet$ We recall Equality \eqref{v1}:
\begin{align*}
\displaystyle &V_1^h(k,.) = \int_h^{(k-1)h}\left(\int_{kh-s}^{kh-\tau^h_s}\partial_u G_u \,du\right)*\nabla \cdot \Big(b(s,.)p(s,.) \Big)\,ds + \int_{(k-1)h}^{kh}\bigg(G_{kh-s}- G_{kh-\tau^h_s} \bigg)* \nabla \cdot \Big(b(s,.)p(s,.) \Big)\,ds.
\end{align*}
Therefore, using the fact that $\ln(1+x) \le x, \, \forall x>0$ and Estimate \eqref{timeDerivG} from Lemma \ref{EstimHeatEq}, we obtain: 
\begin{align*}
\displaystyle \left\|V_1^h(k,.)\right\|_{L^1} &\le d\tilde M \int_{h}^{(k-1)h}\ln\left(1 + \frac{s-\tau^h_s}{kh -s}\right)\,ds + 2 \tilde M h\\
&\le d\tilde M \int_{h}^{(k-1)h} \frac{h}{kh-s}\,ds + 2 \tilde M h = \tilde M\left( d\ln(k-1) + 2\right)h \\
&\le \tilde M \Big(2 + d\ln\left(k\right) \Big)h.
\end{align*}
$\bullet$ Using Lemma \ref{evoldensM}, the estimate \eqref{FirstDerivG} from Lemma \ref{EstimHeatEq} and the fact that $\ln(1+x) \le x, \, \forall x>0$, we have:
\begin{align*}
\displaystyle \left\|V_2^h(k,.)\right\|_{L^1} &\le \sqrt{\frac 2 \pi}dB\hat Q \bigg\{ \int_{h}^{kh} \frac{\sqrt s - \sqrt{\tau^h_s}}{\sqrt{kh - \tau^h_s}}\,ds + \int_{h}^{kh} \frac{s-\tau^h_s}{\sqrt{kh-\tau^h_s}}\ln\left(\frac{s}{s-\tau^h_s} \right)\,ds \\
&\phantom{\sqrt{\frac 2 \pi}dB\hat Q \bigg\{ \int_{h}^{kh} \frac{\sqrt s - \sqrt{\tau^h_s}}{\sqrt{kh - \tau^h_s}}\,ds}+ \int_{h}^{kh} \frac{\tau^h_s}{\sqrt{kh-\tau^h_s}}\ln\left(1 + \frac{s-\tau^h_s}{\tau^h_s} \right)\,ds +  \int_{h}^{kh}\frac{s - \tau^h_s}{\sqrt{kh - \tau^h_s}} \,ds \bigg\}\\
&\le \sqrt{\frac 2 \pi}dB\hat Q \bigg\{\int_{h}^{kh} \frac{h}{\sqrt{kh - s}\sqrt{s}}\,ds+ \int_{h}^{kh}\frac{s-\tau^h_s}{\sqrt{kh-s}}\ln\left(\frac{kh}{s-\tau^h_s} \right)\,ds + 2\int_{h}^{kh}\frac{h}{\sqrt{kh - s}} \,ds\bigg\}.
\end{align*}
The function $x \mapsto x\ln\left(kh/x\right)$ is increasing on the interval $(0,kh/e]$, attains its maximum at $x = kh/e$ and non-increasing on the interval $[kh/e, +\infty)$. Therefore, we get that:
$$\displaystyle \left(s-\tau^h_s\right)\ln\left(\frac{kh}{s-\tau^h_s} \right)\le \left( h \ln\left(k\right) \mathbf{1}_{\left\{ h \le \frac{kh}{e}\right\}} + \frac{kh}{e}\mathbf{1}_{\left\{ h > \frac{kh}{e}\right\}} \right) \le \Big(1+\ln\left(k\right)\Big)h.$$
We then deduce that:
\begin{align*}
\displaystyle \left\|V_2^h(k,.)\right\|_{L^1} &\le \sqrt{\frac 2 \pi}dB\hat Q \left(\pi + 2\sqrt{T}\Big(1+\ln\left(k\right)\Big) + 4\sqrt{T} \right)h.
\end{align*}
Therefore, 
\begin{align*}
  \displaystyle \left\|p(kh,.) - p^h(kh,.) \right\|_{L^1} &\le \hat L \Big(1+\ln\left(k\right)\Big)h+ \sqrt{\frac{2}{\pi}}dB \sum_{j=1}^{k-1} \frac{h}{\sqrt{kh - jh}}\left\|p(jh,.) - p^h(jh,.) \right\|_{L^1}
\end{align*}
where $\displaystyle \hat L = \max(2,d)\tilde M  + dB \left(2B\left(1 + \frac{d-1}{\pi} \right) + \sqrt{\frac 2 \pi}\hat Q \left(\pi + 6\sqrt{T}\right) \right)$. Iterating this inequality, using the fact that for $j \le k, \ln(j) \le \ln(k)$ and using Lemma \ref{sumPi}, we obtain:
\begin{align*}
 \displaystyle &\left\|p(kh,.) - p^h(kh,.) \right\|_{L^1} \\
 &\le \hat L \Big(1+\ln\left(k\right)\Big)h + \sqrt{\frac{2}{\pi}}dB \sum_{j=1}^{k-1} \frac{h}{\sqrt{kh - jh}}\left(\hat L \Big(1+\ln\left(j\right)\Big)h + \sqrt{\frac{2}{\pi}}dB \sum_{l=1}^{j-1} \frac{h}{\sqrt{jh - lh}} \left\|p(lh,.) - p^h(lh,.) \right\|_{L^1}\right) \\
 &\le \hat L\left(1 + \sqrt{\frac 2 \pi}dB\int_h^{kh} \frac{ds}{\sqrt{kh - s}} \right) \Big(1+\ln\left(k\right)\Big)h + 2d^2B^2h \sum_{j=1}^{k-1}\left\|p(jh,.) - p^h(jh,.) \right\|_{L^1} \\
 &\le \hat L\left(1 + 2\sqrt{\frac{2T}{\pi}}dB \right) \Big(1+\ln\left(k\right)\Big)h + 2d^2B^2h \sum_{j=1}^{k-1}\left\|p(jh,.) - p^h(jh,.) \right\|_{L^1}.
\end{align*}
Finally using Lemma \ref{Gronw} and for $\hat C = \hat L \left(1+ 2\sqrt{\frac{2T}{\pi}}dB \right)\left(1 + 2d^2B^2T\exp\Big(2d^2B^2T \Big) \right)$, we conclude that:
\begin{align*}
 \displaystyle \left\|p(kh,.) - p^h(kh,.) \right\|_{L^1} &\le \hat C \Big(1+\ln\left(k\right)\Big)h.
\end{align*}


\section{Numerical Experiments}\label{NumExp}

In order to confirm our theoretical estimates for the convergence rate in total variation of $\mu^h$ to its limit $\mu$, we study SDEs with a piecewise constant drift coefficient and additive noise, as done by G\"ottlich, Lux and Neuenkirch in \cite{GotNeue}. We consider the special case of one-dimensional SDEs with one drift change at zero: 
\begin{align}
	X_t = x + W_t + \int_0^t\bigg( \alpha \mathbf{1}_{(-\infty,0)}(X_s) + \beta \mathbf{1}_{[0,+\infty)}(X_s) \bigg)\,ds \label{generalSDE}
\end{align}
where $X_0=x \in \R$ is the initial value and $\alpha, \beta \in \R$. The difference $(\beta - \alpha)$ represents the height of the jump at the discontinuity point zero. Here, the drift satisfies the reinforced hypothesis of Theorem \ref{cvgenceB} where the derivative of the drift in the sense of distributions is equal to $(\beta-\alpha)\delta_0$.\\

We analyze how the initial value $x$ affects the error and how the jump height influences the empirical rate of convergence. We also observe how the drift direction towards or away from the discontinuity point zero influences the error. When $\alpha > 0 > \beta$, we speak about inward pointing drift coefficient. Inversely, when $\alpha < 0 < \beta$, it is about outward pointing drift coefficient. 

\subsection{The specific case $\alpha = -\beta = \theta >0$}

For $\theta >0$, we study SDEs with inward pointing drift coefficient of the form: $$\displaystyle  X_t = x + W_t - \int_0^t \sgn(X_s)\,\theta \,ds.$$ 
This process is called a Brownian motion with two-valued, state-dependent drift. This example was also used by Kohatsu-Higa, Lejay and Yasuda in \cite{Koh} to estimate the weak convergence rate of the Euler-Maruyama scheme. 
According to \cite{KaratShre}, the transition density function of the process $(X_t)_{t \ge 0}$ starting at $x  \ge 0$ is the following:
\begin{align*}
p_t(x,z) = \left\{
      \begin{aligned}
        &\frac{1}{\sqrt{2\pi t}}\exp\left(-\frac{\left(x-z-\theta t\right)^2}{2t} \right) + \frac{\theta e^{-2\theta z}}{\sqrt{2\pi t}}\int_{x+z}^{+ \infty} \exp\left(-\frac{(y-\theta t)^2}{2t} \right)\,dy \quad \text{when } z > 0, \\
        &\frac{e^{2\theta x}}{\sqrt{2\pi t}}\exp\left(-\frac{\left(x-z+\theta t\right)^2}{2t} \right) + \frac{\theta e^{2\theta z}}{\sqrt{2\pi t}}\int_{x-z}^{+ \infty} \exp\left(-\frac{(y-\theta t)^2}{2t} \right)\,dy \quad \text{when } z \le 0.\\
      \end{aligned}
    \right. 
\end{align*}
For $x \le 0$, the transition density can be deduced from the symmetry of the Brownian motion that gives $p_t(x,z) = p_t(-x,-z)$.\\

We seek to observe the dependence of the error in total variation at terminal time $T$: $\left\| \mu_{T} - \mu^h_{T}\right\|_{\rm TV}$ on the time step $h$ that we choose s.t. $\frac{T}{h}$ is an integer. To do so, we estimate $ \left\|p(T,.) - p^h(T,.) \right\|_{L^1}$ using a kernel density estimator for $p^h(T,.)$. We denote by $N$ the number of random variables $\left( X^{i,h}_T\right)_{1\le i \le N}$ that are i.i.d. with density $p^h(T,.)$. The kernel density estimator of this latter is defined by: $$p_{\epsilon, N}^h(T,x) = \frac{1}{\epsilon N} \sum\limits_{j=1}^N K\left( \frac{x-X^{j,h}_T}{\epsilon}\right)$$ where $K$ represents the kernel and $\epsilon >0$ is a smoothing parameter called the bandwidth. The kernel is a non-negative and integrable even function that ensures the required normalization of a density i.e. $\int_{-\infty}^{+\infty} K(x)\,dx = 1$. As for the smoothing parameter, its influence is critical since a very small $\epsilon$ makes the estimator show insignificant details and a very large $\epsilon$ causes oversmoothing and may mask some characteristics. So a compromise is needed. The optimal smoothing parameter can be chosen through a minimisation of the asymptotic mean integrated squared error. For an explicit known density, as it is the case here, we have that: 
\begin{align}\label{epsilon}
	\epsilon = c N^{-1/5} \quad \text{with} \quad c = \frac{R(K)^{1/5}}{m_2(K)^{2/5}R\left(\partial^2_{xx} p_t\right)^{1/5}}
\end{align}
where for a given function $g$, $\displaystyle R(g) = \int_{\R}g(x)^2\,dx$ and $\displaystyle m_2(g) = \int_{\R}x^2 g(x)\,dx$. We will choose, in what follows, the Epanechnikov kernel defined by:
$$ K(x) = \frac{3}{4}\left(1 - x^2\right) \mathbf{1}_{\left\{|x| \le 1\right\}} $$
 which is known to be theoretically optimal in a mean square error sense with $R(K) = 3/5$ and $m_2(K) = 1/5$. For $\left( X^{(i),h}_T\right)_{1\le i \le N}$ denoting the increasing reordering of $\left( X^{i,h}_T\right)_{1\le i \le N}$, we make the following trapezoidal approximation:
\begin{align*}
 \displaystyle \left\|p^h(T,.) - p(T,.) \right\|_{L^1} &\simeq \sum_{i=1}^{N-1} \frac{1}{2} \left(X^{(i+1),h}_T - X^{(i),h}_T\right)\Bigg\{\left|p_{\epsilon, N}^h\left(T,X^{(i+1),h}_T\right) - p\left(T,X^{(i+1),h}_T \right) \right| \\
 &\phantom{\sum_{i=1}^{N-1} \frac{1}{2} \left(X^{(i+1),h}_t - X^{(i),h}_t\right)\Bigg\{p_{\epsilon, N}^h\left(X^{(i),h}_t\right) - p\left(X\right)} + \left|p_{\epsilon, N}^h\left(T,X^{(i),h}_T\right) - p\left(T,X^{(i),h}_T \right) \right|\Bigg\}.
\end{align*}

We also define the precision of this estimation as half the width of the $95 \%$ confidence interval of the empirical error i.e. $\text{Precision} = 1.96 \times \sqrt{\text{Variance}/R}$ where $R$ denotes the number of Monte-Carlo runs and Variance denotes the empirical variance over these runs of the empirical error.

\subsubsection{Illustration of the theoretical order of convergence in total variation}

To observe the convergence rate in total variation for the case $\theta = 1.0$ and $x=0.0$, we fix the time horizon $T=1$ and the number $N=500000$ of i.i.d. samples in the kernel density estimator large enough in order to observe the effect of the time-step $h$ on the error. The simulation is done with $R = 20$ Monte-Carlo runs. We obtain the following results for the estimation of the error and its associated precision:

\begin{center}
  \begin{tabular}{ |c|c|c|c|c|}
    \hline
    \multicolumn{5}{|c|}{Evolution of the total variation error w.r.t. $h$ } \\ \hline
    Time-step $h$ & Estimation & Precision & Ratio of decrease & Theoretical Ratio \\ \hline
    $T/4$   & $0.2903$ & $ 5.48 \times 10^{-4}$ & $\times$ & $\times$  \\ \hline
    $T/8$   & $0.1680$ & $ 6.96 \times 10^{-4}$ & $1.73$  & $1.63$ \\ \hline
    $T/16$  & $0.0956$ & $ 4.88 \times 10^{-4}$  & $1.76$ & $1.69$ \\ \hline
    $T/32$  & $0.0543$ & $ 5.26 \times 10^{-4}$ & $1.76$ & $1.73$ \\ \hline
    $T/64$  & $0.0314$ & $ 6.53 \times 10^{-4}$ & $1.73$  & $1.76$ \\ \hline
    $T/128$ & $0.0191$ & $ 3.55 \times 10^{-4}$ & $1.64$ & $1.79$ \\ \hline
    $T/256$ & $0.0133$ & $ 2.71 \times 10^{-4}$ & $1.43$ & $1.80$ \\\hline
    $T/512$ & $0.0101$ & $ 3.10 \times 10^{-4}$ & $1.31$ & $1.82$ \\
    \hline
  \end{tabular}
\end{center}

\vskip 0.4cm

\begin{itemize}
	\item We observe that the ratio of successive estimations  $\frac{\text{Estimation}(h)}{\text{Estimation}(h/2)}$ is roughly around $1.72$. But when $h$ becomes small, the ratio decreases towards $1$ (a constant error) because for so small discretizations steps, the effect of the kernel density estimation parameter $N$ cannot be neglected unless $N$ is extremely large.
	\item The last column refers to the theoretical ratios equal to $2\left(\frac{1 + \ln(T/h)}{1+\ln(2T/h)}\right)$ which is the expected behaviour of the error. On the range of values $\left\{\frac{T}{8},\frac{T}{16},\frac{T}{32},\frac{T}{64},\frac{T}{128} \right\}$, both the empirical and the theoretical ratios are equal to $1.72$ in average.
	\item Moreover, the order of convergence in total variation of the Euler scheme is here equal to $0.76$. This order is given by the slope of the regression line, which we obtain when plotting $\log\left\|p(T,.) - p^h(T,.)\right\|_{L^1}$ versus $\log(h)$.
\end{itemize}

\subsubsection{Dependence of the order of convergence on the initial value $x$ for fixed $\theta=1$}

To underline the influence of the initial value of the SDE, we start by generating plots of the explicit transition density function for various initializations $x \in \{-1, 0, 1, 2.5, 5 \}$ and different time horizons $T \in \{1,3,6\}$. We choose a fixed $\theta=1$. We also plot the kernel transition density estimation for the different values of $x$ at $T=1$ for $N=100000$ and a time-step $h = 0.0001$.\\

We first observe from Figures \ref{fig:fig1} and \ref{fig:fig2} that the kernel density estimator catches the discontinuity and reproduces well the expected distribution. We also see from Figures \ref{fig:fig2}, \ref{fig:fig3} and \ref{fig:fig4} that when the process starts from the discontinuity point $x=0.0$ or close to it $x \in \{-1,1\}$, it visits the discontinuity point several times. When we increase the time horizon $T$, the inward pointing drift allows the process to visit the discontinuity point zero when starting far from it.

\newpage

\begin{figure}[ht]
  \centering
  \begin{subfigure}[b]{0.4\linewidth}
    \includegraphics[width=\linewidth]{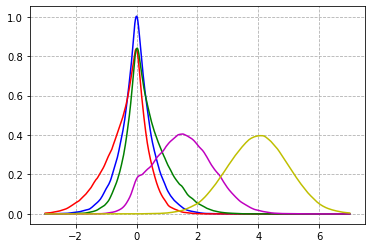}
    \caption{Estimated densities for $T=1$}
    \label{fig:fig1}
  \end{subfigure}
  \begin{subfigure}[b]{0.4\linewidth}
    \includegraphics[width=\linewidth]{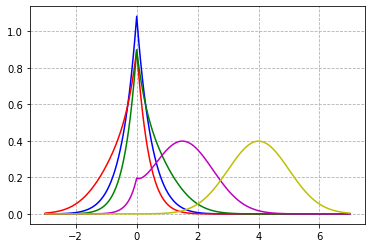}
    \caption{Explicit densities for $T=1$}
    \label{fig:fig2}
  \end{subfigure}
  \begin{subfigure}[b]{0.4\linewidth}
    \includegraphics[width=\linewidth]{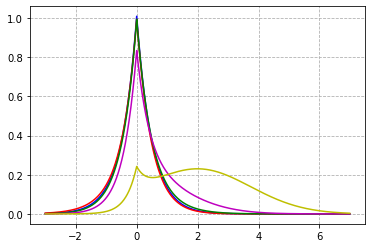}
    \caption{Explicit densities for $T=3$}
    \label{fig:fig3}
  \end{subfigure}
  \begin{subfigure}[b]{0.4\linewidth}
    \includegraphics[width=\linewidth]{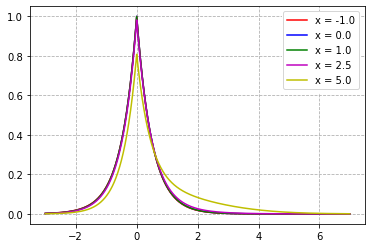}
    \caption{Explicit densities for $T=6$}
    \label{fig:fig4}
  \end{subfigure}
  \caption{The transition density function for various initializations $x$}
  \label{fig:densities}
\end{figure}

We also generate an example of a solution sample path with time-step $h=0.006$ for various initializations $x$ to confirm that point.

\begin{figure}[ht]
\centering
  \includegraphics[width=8cm]{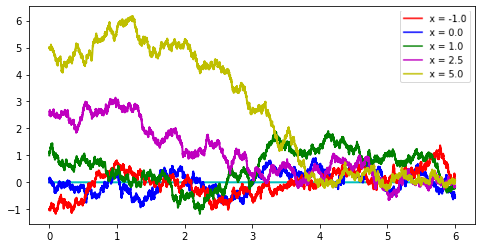}
  \caption{Example of a solution sample paths for various initializations $x$}
\end{figure}

Now, we give the empirical convergence orders obtained for various initializations $x$. These orders are given by the slopes of the regression lines in a log-log scale. The parameters used here are $T=5$, $N=500000$ and step sizes $h \in \left\{\frac{T}{8},\frac{T}{16},\frac{T}{32},\frac{T}{64},\frac{T}{128},\frac{T}{256},\frac{T}{512}\right\}$. \\

\begin{center}
  \begin{tabular}{|c|c|c|c|c|c|}
    \hline
    Initial value $x$  & $-1.0$ & $0.0$ & $1.0$ & $2.5$ & $5.0$ \\ \hline
    Empirical convergence order  & $0.77$ & $0.77$ & $0.77$ & $0.77$ & $0.70$ \\ \hline 
  \end{tabular} 
\end{center}

\vskip 0.4cm

 We can see from the above table that the empirical convergence orders are stable with respect to the initial value for this type of diffusion. A small difference is observed for the initial value $x=5.0$: then the process starts far from the discontinuity point zero and the time horizon is not long enough for the process to visit it with high probability. When the process does not reach the discontinuity, the Euler scheme is exact making the error smaller and the influence of the kernel estimation error stronger.

 \subsubsection{Dependence of the order of convergence on the jump height}

To underline the influence of the jump height equal to $2\theta$, we start by generating an example of a sample path for two values of $\theta \in \{1,10\}$ with fixed time-horizon $T=1$, time-step $h=0.003$  and initial value $x=0.0$.

\begin{figure}[ht]
\centering
  \includegraphics[width=8cm]{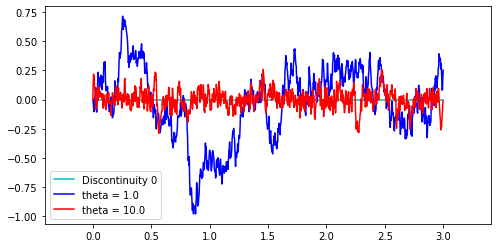}
  \caption{Example of a solution sample paths for various $\theta$}
\end{figure}

 We observe that when $\theta$ is big, the process is more likely to visit the discontinuity multiple times than for a smaller value of $\theta$. We can explain this by introducing the process $Y_t = \frac{1}{\theta}X_t$ that starts from $Y_0 = \frac{x}{\theta}$ and has the following dynamics:
$$ Y_t = Y_0 + \frac{1}{\theta}W_t - \int_0^t \sgn\left(Y_t\right)\,dt. $$
The diffusion coefficient equal to $1/\theta$ becomes very small when $\theta$ becomes large so that the process has an almost deterministic behaviour and is sticked to the discontinuity point zero by the drift.\\

Also, the explicit transition density tends to the Laplace density $\theta e^{-2\theta|x|}$ when $t \to +\infty$. We generate the plots of the kernel density estimate and the explicit transition density function for various $\theta = \{1,3,5,10,20\}$. We choose $T=1$, $x=0.0$, $N=100000$ and $h=0.0001$.

\begin{figure}[ht]
  \centering
  \begin{subfigure}[b]{0.4\linewidth}
    \includegraphics[width=\linewidth]{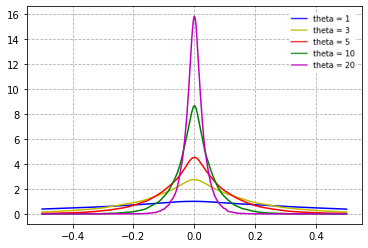}
      \caption{The kernel density estimator }
  \end{subfigure}
  \begin{subfigure}[b]{0.4\linewidth}
    \includegraphics[width=\linewidth]{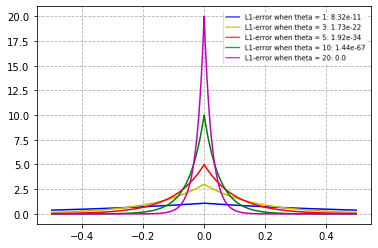}
      \caption{The explicit transition density function}
      \label{cettefigure}
  \end{subfigure}
  \caption{The transition density function for various $\theta$}
  \label{fig:theta_dens_b}
\end{figure}

We can see that when $\theta$ is big, the density converges quickly towards the Laplace density. In Figure \ref{cettefigure}, we confirm this behaviour by giving the $L^1$-error between the explicit transition densities and the Laplace densities for each $\theta$. \\

Now, we give the empirical convergence orders obtained for different values of $\theta$. The parameters used here are $T=1$, $x=0.0$ and $N=800000$. We choose ranges of step-sizes $h$ depending on $\theta$ since for small $\theta$, the discretization error are smaller and the kernel estimation error comparatively more influent. For large $\theta$, a large time-step implies a very large error because on each time-step, when starting close to the discontinuity point zero, the Euler scheme will move far away to the other side of this discontinuity, a behaviour forbidden for the limiting SDE by the large inward pointing drift.

\begin{center}
  \begin{tabular}{|c|c|c|c|c|c|}
    \hline
    Jump-height $\theta$  & $1.0$ & $3.0$ & $5.0$ & $10.0$ & $20.0$ \\ \hline
    time-step range $h$ & $\displaystyle \left\{\frac{1}{2^3},...,\frac{1}{2^8}\right\}$ & $\displaystyle\left\{\frac{1}{2^4},...,\frac{1}{2^9}\right\}$ & $\displaystyle\left\{\frac{1}{2^5},...,\frac{1}{2^{10}}\right\}$ & $\displaystyle\left\{\frac{1}{2^7},...,\frac{1}{2^{12}}\right\}$ & $\displaystyle\left\{\frac{1}{2^8},...,\frac{1}{2^{13}}\right\}$ \\ \hline
    Empirical convergence order  & $0.76$ & $0.77$ & $0.77$ & $0.77$ & $0.72$ \\ \hline 
  \end{tabular}
\end{center}

\vskip 0.4cm

We can see from the above table that the empirical convergence orders are relatively stable with respect to the jump-height for this type of diffusion. A small difference is observed for $\theta=20.0$ since the error is still large for the time-steps considered.   

\subsection{General case}

To our knowledge, no closed-form of a density of $X_t$ solving \eqref{generalSDE} is available for general $\alpha, \beta \in \R$. The idea is still to estimate the $L^1$-norm of the difference of the densities at maturity $T$ but this time, instead of comparing $p^h(T,.)$ to $p(T,.)$, we will compare $p^{h}(T,.)$ to $p^{h/2}(T,.)$ and the expected behaviour is:
\begin{align*}
 \displaystyle \left\|p^h(T,.) - p^{h/2}(T,.) \right\|_{L^1} \le \left\|p^h(T,.) - p(T,.) \right\|_{L^1} + \left\|p^{h/2}(T,.) - p(T,.) \right\|_{L^1} \le \left(\frac{3}{2} + \ln(2) \right) \tilde C \left(1 + \ln\left(\frac T h \right) \right)h.
\end{align*}
In order to estimate $p^h(T.)$, we use, once again, a kernel density estimator but this time, we choose the Gaussian kernel defined by:
$$ K(x) = \frac{1}{\sqrt{2 \pi}} \exp \left(-\frac{x^2}{2} \right) \quad \text{ for } x \in \R.$$
We make this choice since no explicit density is available to estimate the bandwith \eqref{epsilon} and for Gaussian kernels we can obtain use the so-called Silverman's rule of thumb \cite{Silv}:
$$ \epsilon = c N^{-1/5} \quad \text{ with } \quad c= 0.9 \times \min\left(\hat \sigma, \frac{IQR}{1.34}\right)$$
where the standard deviation $\hat \sigma$ and the interquantile range $IQR$ are easily computed from the sample of size $N$. When the density to estimate is a bimodal mixture, we apply Silverman's rule of thumb on each mode.\\
For $\left(X^{i,{h/2}}_T\right)_{1 \le i \le N}$ i.i.d. variables with density $p^{h/2}(T,.)$, the kernel density estimator of this latter is then defined by: $$p_{\epsilon, N}^{h/2}(T,x) = \frac{1}{\epsilon N} \sum\limits_{j=1}^N K\left( \frac{x-X^{j,h/2}_T}{\epsilon}\right)$$
and we make the following trapezoidal approximation:
\begin{align*}
  \displaystyle \left\|p^h(T,.) - p^{h/2}(T,.) \right\|_{L^1} &\simeq \sum_{i=1}^{N-1} \frac{1}{2} \left(X^{(i+1),h}_T - X^{(i),h}_T\right)\Bigg\{\left|p_{\epsilon, N}^h\left(T,X^{(i+1),h}_T\right) - p_{\epsilon, N}^{h/2}\left(T,X^{(i+1),h}_T \right) \right| \\
 &\phantom{\sum_{i=1}^{N-1} \frac{1}{2} \left(X^{(i+1),h}_t - X^{(i),h}_t\right)\Bigg\{p_{\epsilon, N}^h\left(X^{(i),h}_t\right) - } + \left|p_{\epsilon, N}^h\left(T,X^{(i),h}_T\right) - p_{\epsilon, N}^{h/2}\left(T,X^{(i),h}_T \right) \right|\Bigg\}.
\end{align*}

In what follows, we will study the case of outward pointing diffusions i.e. $\alpha < 0 < \beta$ and observe how the initial value and the jump-height influences the error. Beforehand, we observe the convergence rate in total variation when varying the time-step $h$.

\subsubsection{Illustration of the theoretical order of convergence in total variation}

To observe the convergence rate in total variation for the case $\alpha = - 3.0$, $\beta = 4.0$ and $x=0.0$, we fix the time horizon $T=1$ and the number $N=250000$ of i.i.d. samples in the kernel density estimator large enough in order to observe the effect of the time-step $h$ on the error. The simulation is done with $R = 20$ Monte-Carlo runs. We obtain the following results for the estimation of the error and the associated precision: 

\begin{center}
  \begin{tabular}{ |c|c|c|c|c|}
    \hline
    \multicolumn{5}{|c|}{Evolution of the total variation error w.r.t. $h$ } \\ \hline
    Time-step $h$ & Estimation & Precision & Ratio of decrease & Theoretical Ratio \\ \hline
    $T/32$   & $0.1105$ & $3.58 \times 10^{-4}$  & $\times$ & $\times$  \\ \hline
    $T/64$   & $0.0763$ & $3.17 \times 10^{-4}$  & $1.45$   & $1.76$ \\ \hline
    $T/128$  & $0.0478$ & $2.31 \times 10^{-4}$  & $1.60$   & $1.79$ \\ \hline
    $T/256$  & $0.0279$ & $1.91 \times 10^{-4}$  & $1.71$   & $1.80$ \\ \hline
    $T/512$  & $0.0156$ & $1.92 \times 10^{-4}$  & $1.79$   & $1.82$ \\ \hline
    $T/1024$ & $0.0081$ & $1.15 \times 10^{-4}$  & $1.93$   & $1.84$ \\ \hline
    $T/2048$ & $0.0043$ & $1.21 \times 10^{-4}$  & $1.87$   & $1.85$ \\
    \hline
  \end{tabular}
\end{center}

\vskip 0.4cm

\begin{itemize}
	\item We observe that the ratio of successive estimations  $\frac{\text{Estimation}(h)}{\text{Estimation}(h/2)}$ is roughly around $1.72$. 
	\item The last column refers to the theoretical ratios equal to $2\left(\frac{1 + \ln(T/h)}{1+\ln(2T/h)}\right)$ which is the expected behaviour of the error. On the range of values $\left\{\frac{T}{256},\frac{T}{512},\frac{T}{1024},\frac{T}{2048}\right\}$, both the empirical and the theoretical ratios are equal to $1.82$ in average.
	\item Moreover, the order of convergence in total variation of the Euler scheme is here equal to $0.79$. This order is given, once again, by the slope of the regression line in a log-log scale.
\end{itemize}

\subsubsection{Dependence of the order of convergence on the initial value $x$ for fixed $\alpha = -3.0$ and $\beta = 4.0$}

To underline the influence of the initial value of the SDE, we start by generating plots of the estimated transition density function for various initializations $x \in \{-1.4,-0.4,-0.2, -0.15, 0.0, 0.6 \}$. We choose $\alpha = -3.0$, $\beta=4.0$, $T=1$, $N=100000$ and $h=0.0001$. 

\begin{figure}[ht]
  \centering
  \begin{subfigure}[b]{0.45\linewidth}
    \includegraphics[width=\linewidth]{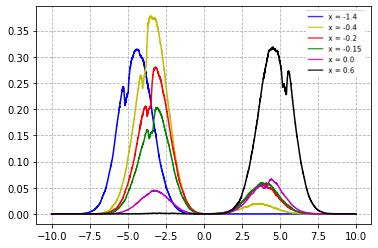}
  \end{subfigure}
  \begin{subfigure}[b]{0.45\linewidth}
    \includegraphics[width=\linewidth]{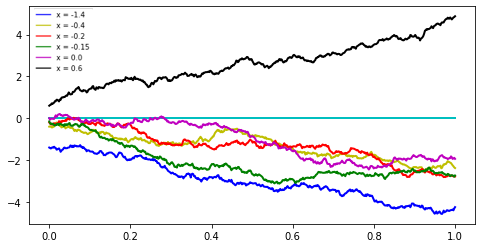}
  \end{subfigure}
  \caption{Transition density functions and examples of a solution sample paths for various initializations $x$}
  \label{iciFig}
\end{figure}

We see from Figure \ref{iciFig} that when the process starts from the discontinuity point $x=0.0$ or close to it $x \in \{-0.2,-0.15\}$, we have a bimodal mixture. When starting far from the discontinuity point zero, we are less likely to visit it and the distribution is Gaussian-like. We also confirm this point by an example of a solution sample path with time-step $h=0.001$ for various initializations $x$.\\

Now, we give the empirical convergence orders obtained for various initializations $x$. These orders are given, once again, by the slopes of the regression lines in log-log scales. The parameters used here are $T=1$, $N=500000$ and step sizes $h \in \left\{\frac{T}{16},\frac{T}{32},\frac{T}{64},\frac{T}{128},\frac{T}{256},\frac{T}{512},\frac{T}{1024},\frac{T}{2048}\right\}$. \\

\begin{center}
  \begin{tabular}{|c|c|c|c|c|c|}
    \hline
    Initial value $x$  & $-1.4$ & $-0.4$ & $-0.2$ & $0.0$ & $0.6$ \\ \hline
    Empirical convergence order  & $0.28$ & $0.68$ & $0.82$  & $0.71$ & $0.51$ \\ \hline 
  \end{tabular}
\end{center}

\vskip 0.4cm

 We can see from the above table that the empirical convergence orders seem to depend on the initial value and the spectrum of orders obtained for different initial values is very broad with values between $0.28$ and $0.82$. When starting further and further from the discontinuity point zero, we first obtain a better order of convergence but when $|x|$ becomes large it deteriorates since the kernel estimation error becomes more influent.

 \subsubsection{Dependence of the order of convergence on the jump height}

To underline the influence of the jump height equal to $(\beta-\alpha)$, we start by generating a sample path for two values of $(\alpha,\beta) \in \{(-4.0,3.0),(-0.6,1.0)\}$ with fixed time-horizon $T=1$ and initial value $x=0.0$.  

\begin{figure}[ht]
\centering
  \includegraphics[width=8cm]{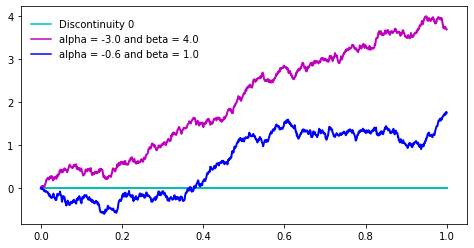}
  \caption{Solution sample paths for various $(\alpha,\beta)$}
\end{figure}

 We observe that the solution drifts away from the discontinuity point zero and therefore, there are not many chances for a drift correction to take place.\\

 Now, we give the empirical convergence orders obtained for various $(\alpha,\beta)$. These orders are given, once again, by the slopes of the regression lines in log-log scales. The parameters used here are $T=1$, $N=500000$, $x=0.0$ and step-sizes $h \in \left\{\frac{T}{8},\frac{T}{16},\frac{T}{32},\frac{T}{64},\frac{T}{128},\frac{T}{256},\frac{T}{512}\right\}$. \\

\begin{center}
  \begin{tabular}{|c|c|c|c|c|c|}
    \hline
    $(\alpha,\beta)$  & $(-6.0,8.0)$ & $(-3.0,4.0)$ & $(-1.5,2.0)$ & $(-0.75,1.0)$ & $(-0.375,0.5)$ \\ \hline
    Empirical convergence order  & $0.51$ & $0.66$ & $0.90$ & $1.02$ & $0.78$ \\ \hline 
  \end{tabular}
\end{center}

\vskip 0.4cm

 We can see from the table above that the empirical convergence orders are not stable with respect to the jump-height for outward pointing drift diffusions. Enlarging the jump height makes the solution to drift away from the discontinuity point zero and increases the error.

\subsection{Conclusion}

We were able, through our numerical experiments, to confirm our theoretical estimates for the convergence rate in total variation of $\mu^h_T$ to its limit $\mu_T$ since the order $1$ up to a logarithmic factor was recovered.

 Moreover, the study conducted when varying the type of drift (inward pointing or outward pointing) has highlighted several features. Our results and interpretations coincide with those obtained by G\"ottlich, Lux and Neuenkirch in \cite{GotNeue} when they estimate the root mean-squared strong error. As them, we show that for inward pointing drift coefficients, the convergence order is independent of the initial value and the jump-height. This is not the case for outward pointing drift coefficients: the numerical orders are less stable in the initial value and the jump-height. A possible explanation is that the inward pointing drift coefficient engender many drift changes, while only few drift changes occur in the case of an outward pointing drift coefficient. The solution, in the latter case, can quickly drift away from the discontinuity and because of a small probability of a drift change, the empirical convergence rate might be subject to rare event effects and the linear regression estimates become questionable.


\begin{appendix}

\section{Appendix}

\begin{lem}\label{lema1}
   Let $\sigma\in\R^{d\times d}$ be a non-degenerate matrix and $\tilde g:\R^d\to\R^d$ be a measurable and locally integrable function, the spatial divergence in the sense of distributions of which is a Radon measure denoted by $\nabla\cdot\tilde g(dy)$. Then, the function $g:\R^d\to\R^d$ defined by $g(x)=\sigma^{-1}\tilde g(\sigma x)$ is locally integrable and its spatial divergence $\nabla \cdot g(dx)$ in the sense of distributions is the image of $\left|{\rm det}(\sigma^{-1})\right|\nabla \cdot \tilde g$ by $y\mapsto\sigma^{-1} y$. In particular, the total mass of $\nabla \cdot g(dx)$ is equal to $\left|{\rm det}(\sigma^{-1})\right|$ times the total mass of $\nabla \cdot \tilde g(dy)$ and when $\nabla \cdot \tilde g(dy)$ admits the density $f(y)$ with respect to the Lebesgue measure, then $\nabla\cdot g(dx)$ admits the density $f(\sigma x)$.
 \end{lem}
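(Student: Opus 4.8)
The plan is to unwind the definition of the distributional divergence and reduce everything to a linear change of variables. Fix a test function $\varphi\in\mathcal{C}^\infty_c(\R^d)$. By definition, $\langle\nabla\cdot g,\varphi\rangle=-\int_{\R^d}g(x)\cdot\nabla\varphi(x)\,dx=-\int_{\R^d}\sigma^{-1}\tilde g(\sigma x)\cdot\nabla\varphi(x)\,dx$, and this integral is meaningful because $g$ is locally integrable: for any compact $K$, the change of variables $y=\sigma x$ gives $\int_K|g(x)|\,dx\le\|\sigma^{-1}\|\,\left|\det\sigma^{-1}\right|\int_{\sigma K}|\tilde g(y)|\,dy<+\infty$ since $\sigma K$ is compact and $\tilde g\in L^1_{\rm loc}$.

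First I would introduce $\psi:=\varphi\circ\sigma^{-1}$, which again belongs to $\mathcal{C}^\infty_c(\R^d)$ because $\sigma^{-1}$ is a linear isomorphism, and write $\varphi=\psi\circ\sigma$, so that $\nabla\varphi(x)=\sigma^{\mathsf T}(\nabla\psi)(\sigma x)$. Substituting this and using the elementary identity $\sigma^{-1}v\cdot\sigma^{\mathsf T}w=v\cdot w$ (since $(\sigma^{-1}v)^{\mathsf T}\sigma^{\mathsf T}w=v^{\mathsf T}(\sigma\sigma^{-1})^{\mathsf T}w=v^{\mathsf T}w$) with $v=\tilde g(\sigma x)$ and $w=(\nabla\psi)(\sigma x)$, the integrand collapses to $\tilde g(\sigma x)\cdot(\nabla\psi)(\sigma x)$. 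Then the change of variables $y=\sigma x$, whose Jacobian factor is $\left|\det\sigma^{-1}\right|$, turns the pairing into $-\left|\det\sigma^{-1}\right|\int_{\R^d}\tilde g(y)\cdot\nabla\psi(y)\,dy=\left|\det\sigma^{-1}\right|\langle\nabla\cdot\tilde g,\psi\rangle=\left|\det\sigma^{-1}\right|\int_{\R^d}\varphi(\sigma^{-1}y)\,\nabla\cdot\tilde g(dy)$. By the very definition of the image measure, this identity, valid for every $\varphi\in\mathcal{C}^\infty_c(\R^d)$, says precisely that $\nabla\cdot g$ agrees as a distribution, hence as a Radon measure, with the image of $\left|\det\sigma^{-1}\right|\nabla\cdot\tilde g$ under $y\mapsto\sigma^{-1}y$.

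The two remaining assertions are then routine corollaries. Since $y\mapsto\sigma^{-1}y$ is a bimeasurable bijection, taking the image commutes with taking the total variation of a signed measure, so $\left\|\nabla\cdot g\right\|_{\rm TV}=\left|\det\sigma^{-1}\right|\left\|\nabla\cdot\tilde g\right\|_{\rm TV}$. If moreover $\nabla\cdot\tilde g(dy)=f(y)\,dy$, one further change of variables $x=\sigma^{-1}y$ gives $\int_{\R^d}\varphi(\sigma^{-1}y)f(y)\,dy=\left|\det\sigma\right|\int_{\R^d}\varphi(x)f(\sigma x)\,dx$, whence $\nabla\cdot g(dx)=\left|\det\sigma^{-1}\right|\left|\det\sigma\right|f(\sigma x)\,dx=f(\sigma x)\,dx$.

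I expect the only delicate point to be the bookkeeping: getting the transpose in $\nabla(\psi\circ\sigma)=\sigma^{\mathsf T}\bigl((\nabla\psi)\circ\sigma\bigr)$ right and checking that it cancels the $\sigma^{-1}$ in front of $\tilde g$, together with keeping the two Jacobian factors $\left|\det\sigma^{-1}\right|$ and $\left|\det\sigma\right|$ straight and verifying that $\varphi\circ\sigma^{-1}$ is still an admissible test function. Nothing beyond Lebesgue's change-of-variables formula and the definition of distributional derivatives is needed.
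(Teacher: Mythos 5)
Your proof is correct and follows essentially the same route as the paper: both arguments verify local integrability by the change of variables $y=\sigma x$, then pair $\nabla\cdot g$ against a test function, use the cancellation of $\sigma^{-1}$ against the transpose $\sigma^{\mathsf T}$ arising from the chain rule, and change variables to land on $\left|\det(\sigma^{-1})\right|\int\varphi(\sigma^{-1}y)\,\nabla\cdot\tilde g(dy)$, the remaining claims about total mass and densities following exactly as in the paper. The only difference is cosmetic (you introduce $\psi=\varphi\circ\sigma^{-1}$ and differentiate before changing variables, the paper changes variables first), so there is nothing to add.
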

 \begin{proof}
   The local integrability of $g$ is easily obtained by the change of variables $y=\sigma x$. For any ${\cal C}^\infty$ function $\varphi:\R^d\to\R$  with compact support, we obtain using the same change of variables that
   \begin{align*}
     \int_{\R^d}g(x).\nabla_x\varphi(x)dx&=\int_{\R^d}\sigma^{-1}\tilde g(\sigma x).\nabla_x\varphi(x)dx=
     |{\rm det}(\sigma^{-1})|\int_{\R^d}\tilde g(y).(\sigma^{-1})^*\nabla_x\varphi(\sigma^{-1}y)dy\\&=|{\rm det}(\sigma^{-1})|\int_{\R^d}\tilde g(y).\nabla_y[\varphi(\sigma^{-1}y)]dy=-|{\rm det}(\sigma^{-1})|\int_{\R^d}\varphi(\sigma^{-1}y)\nabla\cdot\tilde g(dy),
   \end{align*}
which implies the first statement. The one concerning the total masses immediately follows and the one concerning the densities is obtained by the inverse change of variables $x=\sigma^{-1}y$.
\end{proof}

For $t>0$, let $G_t$ denote the heat kernel in $\R^{d}$: $\displaystyle G_t(x) = \frac{1}{\sqrt{(2\pi t)^d }}\exp\left(-\frac{| x |^2}{2t}\right)$.  The following lemma provides a set of estimates that are very useful:
\begin{lem}\label{EstimHeatEq}
The function $G_t(x)$ solves the heat equation:
\begin{align}\label{heateq}
    \partial_tG_t(x) - \frac{1}{2} \Delta G_t(x) = 0, \quad (t,x) \in [0,+\infty)\times \R^d.
\end{align}
We have estimates of the $L^1$-norm of the first order time derivative and the spatial derivatives of $G$ up to the third order:
\begin{align}
  &\left\Vert \partial_t G_t \right\Vert_{L^1} \le \frac d t ,\label{timeDerivG} \\
  &\left\Vert \partial_{x_i} G_t \right\Vert_{L^1} = \sqrt{\frac{2}{\pi t}}, \label{FirstDerivG} \\
  &\left\Vert \frac{\partial^2 G_{t}}{\partial x_i^2} \right\Vert_{L^1} \le \frac{2}{t}, \label{SecondDerivG1}\\
  &\left\Vert \frac{\partial^2 G_{t}}{\partial x_i x_j} \right\Vert_{L^1} = \frac{2}{\pi t}\quad \text{when } j \neq i, \label{SecondDerivG2} \\
  & \left\Vert \frac{\partial^3 G_{t}}{\partial x_j x_i^2} \right\Vert_{L^1} \le \left\{
      \begin{aligned}
        &\frac{2}{t^{3/2}}\sqrt{\frac{2}{\pi}} \quad \text{when } j \neq i, \\
        &\frac{5}{t^{3/2}}\sqrt{\frac{2}{\pi}} \quad \text{when } j = i.\\
      \end{aligned}
    \right. \label{ThirdDerivG}
\end{align}
\end{lem}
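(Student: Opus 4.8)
The plan is to exploit the product structure of the heat kernel. Writing $g_t(u)=(2\pi t)^{-1/2}\exp(-u^2/(2t))$ for the one-dimensional Gaussian, one has $G_t(x)=\prod_{k=1}^{d}g_t(x_k)$, so that every partial derivative of $G_t$ factorises into a product of derivatives of $g_t$ acting on distinct coordinates, and, by Tonelli's theorem, the $L^1(\R^d)$-norm of such a product equals the product of the one-dimensional $L^1(\R)$-norms of its factors (each undifferentiated factor contributing $\int_{\R}g_t=1$). Everything therefore reduces to estimating the three scalar quantities $\int_{\R}|g_t^{(j)}(u)|\,du$ for $j=1,2,3$.

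For these I would use the parabolic scaling $g_t(u)=t^{-1/2}g_1(u/\sqrt t)$, which, after differentiating $j$ times, gives $g_t^{(j)}(u)=t^{-(j+1)/2}g_1^{(j)}(u/\sqrt t)$ and hence $\int_{\R}|g_t^{(j)}(u)|\,du=t^{-j/2}\int_{\R}|g_1^{(j)}(v)|\,dv$. It then remains to handle the three fixed numbers $\int_{\R}|g_1^{(j)}|$. With $Z$ a standard normal variable and using $g_1'(v)=-vg_1(v)$, $g_1''(v)=(v^2-1)g_1(v)$ and $g_1'''(v)=(3v-v^3)g_1(v)$, one gets $\int_{\R}|g_1'|=\E|Z|=\sqrt{2/\pi}$ exactly (which gives \eqref{FirstDerivG}), $\int_{\R}|g_1''|\le\E[Z^2+1]=2$ (which gives \eqref{SecondDerivG1}), and $\int_{\R}|g_1'''|\le\E|Z|^3+3\E|Z|=2\sqrt{2/\pi}+3\sqrt{2/\pi}=5\sqrt{2/\pi}$, using the identity $\E|Z|^3=2\sqrt{2/\pi}$; this is the $j=i$ case of \eqref{ThirdDerivG}. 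The mixed estimates \eqref{SecondDerivG2} and the $j\neq i$ case of \eqref{ThirdDerivG} then follow by multiplying the appropriate one-dimensional norms: $\|\partial^2_{x_ix_j}G_t\|_{L^1}=(\int_{\R}|g_t'|)^2=\frac{2}{\pi t}$ and $\|\partial^3_{x_jx_i^2}G_t\|_{L^1}=(\int_{\R}|g_t''|)(\int_{\R}|g_t'|)\le\frac{2}{t}\sqrt{\frac{2}{\pi t}}=\frac{2}{t^{3/2}}\sqrt{\frac{2}{\pi}}$.

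For \eqref{heateq} I would simply check by direct computation that the scalar Gaussian solves $\partial_tg_t-\frac12\partial_u^2 g_t=0$; then, differentiating the product, $\partial_tG_t=\sum_k(\partial_tg_t(x_k))\prod_{l\neq k}g_t(x_l)=\frac12\sum_k(\partial^2_{x_k}g_t(x_k))\prod_{l\neq k}g_t(x_l)=\frac12\Delta G_t$. Finally, \eqref{timeDerivG} is obtained from \eqref{heateq} together with \eqref{SecondDerivG1}: $\|\partial_tG_t\|_{L^1}=\frac12\|\Delta G_t\|_{L^1}\le\frac12\sum_{i=1}^{d}\|\partial^2_{x_i}G_t\|_{L^1}\le\frac12\cdot d\cdot\frac{2}{t}=\frac{d}{t}$.

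There is no genuine obstacle here: the argument is a bookkeeping exercise combining separation of variables, elementary Gaussian moments, and the scaling $u\mapsto u/\sqrt t$. The only points needing a little care are getting the power of $t$ right in the scaling identity for $g_t^{(j)}$, and, for the mixed second- and third-order estimates, correctly distinguishing whether the differentiated coordinates coincide, since this determines whether one multiplies $\int_{\R}|g_t'|$ by itself or by $\int_{\R}|g_t''|$ (respectively $\int_{\R}|g_t'''|$).
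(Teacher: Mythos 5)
Your proof is correct and follows essentially the same route as the paper: factorisation of $G_t$ into one-dimensional Gaussians, Fubini/Tonelli to reduce to scalar Gaussian moment integrals (the paper computes these directly at general $t$ rather than rescaling to $t=1$, a purely cosmetic difference), and the bound \eqref{timeDerivG} deduced from the heat equation together with \eqref{SecondDerivG1}. No gaps.
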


\begin{proof}
  Let us compute the estimate \eqref{FirstDerivG}. To do so, we use Fubini's theorem and obtain:
  \begin{align*}
    \left\Vert \partial_{x_i} G_t \right\Vert_{L^1} &= \int_{\R^d} \frac{\left|x_i \right|}{t}\frac{1}{\left(2\pi t\right)^{d/2}}\exp\left(-\sum\limits_{j=1}^{d}\frac{x_j^2}{2t} \right)\,dx_1\dots dx_d \\
    &= \left(\int_{\R} \frac{\left|y \right|}{t}\frac{1}{\sqrt{2\pi t}}\exp\left(-\frac{y^2}{2t}\right)\,dy\right)\times\left(\int_{\R} \frac{1}{\sqrt{2\pi t}}\exp\left(-\frac{y^2}{2t}\right)\,dy\right)^{d-1}  \\
    &= \sqrt{\frac{2}{\pi t}}.
  \end{align*}
  We can express the second and third spatial derivatives of $G$ as:
  \begin{align*}
    &\frac{\partial^2}{\partial x_i x_j} G_t(x) = \left\{
      \begin{aligned}
        &\frac{x_i x_j}{t^2}G_t(x) \quad \text{when } j \neq i, \\
        &\left(-1 + \frac{x_i^2}{t}\right)\frac{G_t(x)}{t} \quad \text{when } j = i.\\
      \end{aligned}
    \right. \; \text{and } \; \frac{\partial^3}{\partial x_j \partial x_i^2} G_t(x) = \left\{
      \begin{aligned}
        &\left(1 - \frac{x_i^2}{t}\right)\frac{x_j}{t^2}G_t(x) \quad \text{when } j \neq i, \\
        &\left(3 - \frac{x_i^2}{t}\right)\frac{x_i}{t^2}G_t(x) \quad \text{when } j = i.\\
      \end{aligned}
    \right.
  \end{align*}
  Using Fubini's theorem as for the estimate \eqref{FirstDerivG}, $\displaystyle \int_{\R} \frac{y^2}{t^2}\frac{e^{-\frac{y^2}{2t}}}{\sqrt{2 \pi t}}\,dy =  \frac{1}{t}$ and $\displaystyle \int_{\R} \frac{|y|^3}{t^3}\frac{e^{-\frac{y^2}{2t}}}{\sqrt{2 \pi t}}\,dy = \frac{2}{t^{3/2}}\sqrt{\frac{2}{\pi}}$, we obtain the estimates \eqref{SecondDerivG1}, \eqref{SecondDerivG2} and \eqref{ThirdDerivG}. As for the estimate \eqref{timeDerivG}, we deduce it from the heat equation \eqref{heateq} and the estimate \eqref{SecondDerivG1}.
\end{proof}

\begin{lem}\label{sumPi}
  We have: $$\displaystyle \forall n \ge 2, \quad \sum\limits_{k=1}^{n-1} \frac{1}{\sqrt{k}\; \sqrt{n-k}} \le \pi - \frac{2}{n}.$$ 
\end{lem}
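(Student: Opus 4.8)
The plan is to dominate the sum by an integral through a convexity argument and to exploit the classical value $\int_0^1\frac{ds}{\sqrt{s(1-s)}}=\pi$. First I would record that the function $\phi(t)=\frac{1}{\sqrt{t(n-t)}}$ is convex on $(0,n)$: a direct computation gives
\[
\phi''(t)=\tfrac34\,(nt-t^2)^{-5/2}(n-2t)^2+(nt-t^2)^{-3/2},
\]
which is a sum of two nonnegative terms, hence strictly positive on $(0,n)$.

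Next, for each $k\in\{1,\dots,n-1\}$ the interval $[k-\tfrac12,k+\tfrac12]$ lies inside $(0,n)$, since $\tfrac12\le k-\tfrac12$ and $k+\tfrac12\le n-\tfrac12$. On this interval the tangent-line inequality $\phi(t)\ge\phi(k)+\phi'(k)(t-k)$ holds; integrating over $[k-\tfrac12,k+\tfrac12]$ and using $\int_{k-1/2}^{k+1/2}(t-k)\,dt=0$ yields $\phi(k)\le\int_{k-1/2}^{k+1/2}\phi(t)\,dt$. Summing over $k$ and noting that the intervals $[k-\tfrac12,k+\tfrac12]$ tile $[\tfrac12,n-\tfrac12]$, I obtain
\[
\sum_{k=1}^{n-1}\frac{1}{\sqrt{k}\,\sqrt{n-k}}\le\int_{1/2}^{\,n-1/2}\frac{dt}{\sqrt{t(n-t)}}.
\]
The change of variables $t=ns$ turns the right-hand side into $\int_{1/(2n)}^{1-1/(2n)}\frac{ds}{\sqrt{s(1-s)}}$, and by the symmetry $s\mapsto1-s$ of the integrand together with $\int_0^1\frac{ds}{\sqrt{s(1-s)}}=\pi$ (seen e.g. via $s=\sin^2\theta$), this equals $\pi-2\int_0^{1/(2n)}\frac{ds}{\sqrt{s(1-s)}}$.

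Finally I would lower-bound the leftover integral by dropping the factor $\frac{1}{\sqrt{1-s}}\ge1$:
\[
\int_0^{1/(2n)}\frac{ds}{\sqrt{s(1-s)}}\ge\int_0^{1/(2n)}\frac{ds}{\sqrt{s}}=\sqrt{\tfrac{2}{n}}\ge\tfrac1n,
\]
the last step being equivalent to $2n\ge1$, which holds for $n\ge2$. Combining the displays gives $\sum_{k=1}^{n-1}\frac{1}{\sqrt{k}\,\sqrt{n-k}}\le\pi-2\sqrt{2/n}\le\pi-\tfrac2n$, which is in fact slightly stronger than claimed. The only genuinely delicate point is the convexity/tangent-line comparison of the first two steps; the substitution and the Beta-integral value $B(\tfrac12,\tfrac12)=\pi$ are routine. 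Should one prefer to avoid differentiating $\phi$ twice, an equivalent route is to factor out $\tfrac1n$ and apply the midpoint/Jensen comparison to $x\mapsto\frac{1}{\sqrt{x(1-x)}}$ on $(0,1)$, whose convexity follows from the same one-line computation.
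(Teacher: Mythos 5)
Your proof is correct, but it reaches the bound by a genuinely different mechanism than the paper. Both arguments compare the sum with the Beta integral $\int_0^1\frac{ds}{\sqrt{s(1-s)}}=\pi$, yet the paper exploits only the \emph{monotonicity} of $f(x)=\frac{1}{\sqrt{x(1-x)}}$ on $(0,\tfrac12]$ and $[\tfrac12,1)$: each term $\tfrac1n f(k/n)$ is dominated by the integral over an adjacent subinterval of length $\tfrac1n$, which forces a separate treatment of even and odd $n$, and the slack $\tfrac2n$ comes from the one missing central subinterval near $\tfrac12$, where $f\ge f(\tfrac12)=2$. You instead use \emph{convexity} of $\phi(t)=\frac{1}{\sqrt{t(n-t)}}$ together with the tangent-line (midpoint-rule) inequality $\phi(k)\le\int_{k-1/2}^{k+1/2}\phi(t)\,dt$, which needs no parity split, and your slack comes from the two uncovered endpoint tails $[0,\tfrac12]$ and $[n-\tfrac12,n]$, where the integrand blows up; lower-bounding each tail by $\int_0^{1/(2n)}s^{-1/2}\,ds=\sqrt{2/n}$ gives the slightly stronger estimate $\sum_{k=1}^{n-1}\frac{1}{\sqrt{k}\sqrt{n-k}}\le\pi-2\sqrt{2/n}\le\pi-\frac{2}{n}$. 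The paper's route is marginally more elementary (no second derivative), while yours is shorter to state uniformly in $n$ and yields a sharper constant; all the individual steps you use (the computation of $\phi''$, the substitution $t=ns$, the symmetry of the integrand, and the tail bound valid since $2n\ge1$) check out.
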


\begin{proof}
  We define the function $f(x) = \frac{1}{\sqrt{x}\sqrt{1-x}}$ on $(0,1)$. We easily check that $\forall x \in (0,1), f(x) \ge f\left(1/2\right) = 2$  and that $\displaystyle \int_0^1f(x)\,dx = \pi$. Using the monotonicity of $f$ on $(0,1/2]$ and $[1/2,1)$, we obtain: 
  $$\displaystyle \left\{ \begin{aligned}
        &\int_{\frac{k-1}{n} }^{ \frac{k}{n}} f(x)\,dx \ge \frac{1}{n} f\left(\frac{k}{n}\right) \quad \text{when } 1 \le k \le \frac{n}{2}, \\
        &\int_{\frac{k}{n} }^{ \frac{k+1}{n}} f(x)\,dx \ge \frac{1}{n} f\left(\frac{k}{n}\right) \quad \text{when } \frac{n}{2} \le k \le (n-1).\\
      \end{aligned}
    \right.$$ 
  Therefore,\\
  $\bullet$ When $n$ is even: 
  \begin{align*}
    \displaystyle \frac{1}{n}\sum_{k=1}^{\frac{n}{2}}f\left(\frac{k}{n}\right) + \frac{1}{n}\sum_{k=\frac{n}{2}+1}^{n-1}f\left(\frac{k}{n}\right) \le \sum_{k=1}^{\frac{n}{2}}\int_{\frac{k-1}{n} }^{ \frac{k}{n}} f(x)\,dx + \sum_{k=\frac{n}{2}+1}^{n-1}\int_{\frac{k}{n} }^{ \frac{k+1}{n}} f(x)\,dx &= \int_0^{\frac 1 2}f(x)\,dx + \int_{\frac 1 2 + \frac 1 n}^1f(x)\,dx \\
    &= \pi - \int_{\frac 1 2}^{\frac 1 2 + \frac 1 n}f(x)\,dx \le \pi - \frac 2 n .
  \end{align*}
  $\bullet$ When $n$ is odd: 
  \begin{align*}
    \displaystyle \frac{1}{n}\sum_{k=1}^{\frac{n-1}{2}}f\left(\frac{k}{n}\right) + \frac{1}{n}\sum_{k=\frac{n+1}{2}}^{n-1}f\left(\frac{k}{n}\right) \le \sum_{k=1}^{\frac{n-1}{2}}\int_{\frac{k-1}{n} }^{ \frac{k}{n}} f(x)\,dx + \sum_{k=\frac{n+1}{2}}^{n-1}\int_{\frac{k}{n} }^{ \frac{k+1}{n}} f(x)\,dx &= \int_0^{\frac{1}{2} - \frac{1}{2n}}f(x)\,dx + \int_{\frac{1}{2} + \frac{1}{2n}}^1f(x)\,dx \\
    &= \pi - \int_{\frac{1}{2} - \frac{1}{2n}}^{\frac{1}{2} + \frac{1}{2n}}f(x)\,dx \le \pi - \frac 2 n .
  \end{align*}
  We can conclude.
\end{proof}

\begin{lem}\label{integ}
  For $0 < a \le x \le T$, 
  \begin{align*}
    \displaystyle \int_a^x \frac{dy}{y\sqrt{x-y}} &= \frac{1}{\sqrt x} \ln\left(\frac{\left(\sqrt x + \sqrt{x-a} \right)^2}{a} \right)\le  \frac{1}{\sqrt x} \ln\left(\frac{4x}{a} \right).
  \end{align*}
\end{lem}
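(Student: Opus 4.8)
The plan is to reduce the integral to a standard rational form by a substitution that clears the square root, then integrate by partial fractions and simplify the result, and finally obtain the stated upper bound by an elementary monotonicity estimate.

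First I would substitute $u=\sqrt{x-y}$, equivalently $y=x-u^2$, so that $dy=-2u\,du$; as $y$ increases from $a$ to $x$, the variable $u$ decreases from $\sqrt{x-a}$ to $0$. This transforms $\int_a^x \frac{dy}{y\sqrt{x-y}}$ into $\int_0^{\sqrt{x-a}} \frac{2\,du}{x-u^2}$, the factor $u$ arising from $dy$ cancelling the $\sqrt{x-y}=u$ in the denominator. The substitution is legitimate because $y\mapsto x-u^2$ is a $\mathcal C^1$ diffeomorphism on the relevant range and, since $a>0$, one has $0\le u\le \sqrt{x-a}<\sqrt x$, so the integrand $\frac{2}{x-u^2}$ stays bounded.

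Next I would use the partial fraction decomposition $\frac{2}{x-u^2}=\frac{1}{\sqrt x}\left(\frac{1}{\sqrt x-u}+\frac{1}{\sqrt x+u}\right)$, valid on $[0,\sqrt{x-a}]$ by the inequality just noted, and integrate term by term to get $\frac{1}{\sqrt x}\ln\frac{\sqrt x+u}{\sqrt x-u}$ evaluated between $0$ and $\sqrt{x-a}$, that is $\frac{1}{\sqrt x}\ln\frac{\sqrt x+\sqrt{x-a}}{\sqrt x-\sqrt{x-a}}$. Multiplying numerator and denominator of the fraction inside the logarithm by $\sqrt x+\sqrt{x-a}$ and using $(\sqrt x)^2-(\sqrt{x-a})^2=a$ yields the claimed closed form $\frac{1}{\sqrt x}\ln\frac{(\sqrt x+\sqrt{x-a})^2}{a}$.

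Finally, for the inequality I would simply observe that $0\le x-a\le x$, hence $\sqrt{x-a}\le\sqrt x$ and $(\sqrt x+\sqrt{x-a})^2\le(2\sqrt x)^2=4x$, so monotonicity of the logarithm gives $\frac{1}{\sqrt x}\ln\frac{(\sqrt x+\sqrt{x-a})^2}{a}\le\frac{1}{\sqrt x}\ln\frac{4x}{a}$. There is no genuine obstacle in this lemma; the only point meriting a line of justification is that the choice $u=\sqrt{x-y}$ is well defined and the partial-fraction denominators do not vanish, both of which are immediate consequences of the hypothesis $0<a\le x$.
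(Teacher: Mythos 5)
Your proof is correct and follows essentially the same route as the paper: the substitution $u=\sqrt{x-y}$ followed by partial fractions, then the elementary bound $\sqrt{x-a}\le\sqrt{x}$ for the final inequality. Your write-up is in fact slightly more careful about the sign and orientation of the limits after the substitution than the paper's one-line computation.
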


\begin{proof}
  Using the change of variable $u=\sqrt{x-y}$ then a partial fraction decomposition, we obtain: \\
  $ \displaystyle \int_a^x \frac{dy}{y\sqrt{x-y}} = 2\int_0^{\sqrt{x-a}} \frac{du}{u^2 - x} = \left[\frac{1}{\sqrt x} \ln\left(\frac{\sqrt x + u}{\sqrt x - u}\right)\right]^{\sqrt{x-a}}_0$. 
\end{proof}

\begin{lem}\label{intg}
  For $0 < r \le s \le T$, 
  \begin{align*}
    \displaystyle \frac{1}{2}\int_0^r \frac{\ln(s-u) - \ln(r-u)}{\sqrt u}\,du = \frac{s-r}{\sqrt s + \sqrt r}\ln\left(\frac{\left(\sqrt s + \sqrt r \right)^2}{s-r} \right) + 2 \sqrt r \ln\left(1 + \frac{\sqrt s - \sqrt r}{2 \sqrt r} \right).
  \end{align*}
\end{lem}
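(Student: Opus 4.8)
The plan is to reduce everything to elementary antiderivatives of $v\mapsto\ln(a\pm v)$. First I would perform the substitution $u=v^2$, $du=2v\,dv$, which turns $\frac{du}{\sqrt u}$ into $2\,dv$ and the domain $u\in(0,r)$ into $v\in(0,\sqrt r)$, so that
\begin{align*}
  \frac12\int_0^r\frac{\ln(s-u)-\ln(r-u)}{\sqrt u}\,du
  =\int_0^{\sqrt r}\ln(s-v^2)\,dv-\int_0^{\sqrt r}\ln(r-v^2)\,dv.
\end{align*}
Then I would factor $s-v^2=(\sqrt s-v)(\sqrt s+v)$ and $r-v^2=(\sqrt r-v)(\sqrt r+v)$ and split each logarithm into a sum, reducing the problem to the four primitives $\int\ln(\sqrt s\pm v)\,dv$ and $\int\ln(\sqrt r\pm v)\,dv$.

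Each of these is computed by the standard formula $\int\ln(a-v)\,dv=-(a-v)\bigl(\ln(a-v)-1\bigr)$ and $\int\ln(a+v)\,dv=(a+v)\bigl(\ln(a+v)-1\bigr)$. The only point requiring a word of care is the term $\int_0^{\sqrt r}\ln(\sqrt r-v)\,dv$, which is an improper integral at the upper endpoint: there one uses $(\sqrt r-v)\ln(\sqrt r-v)\to 0$ as $v\to\sqrt r$, so the boundary contribution vanishes and the integral is finite. Evaluating the four primitives between $0$ and $\sqrt r$ and collecting the $\sqrt s\ln\sqrt s$ terms (which cancel between the two groups), I expect to arrive at
\begin{align*}
  \frac12\int_0^r\frac{\ln(s-u)-\ln(r-u)}{\sqrt u}\,du
  =(\sqrt s+\sqrt r)\ln(\sqrt s+\sqrt r)-(\sqrt s-\sqrt r)\ln(\sqrt s-\sqrt r)-2\sqrt r\,\ln(2\sqrt r).
\end{align*}

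It remains to check that the right-hand side of the claimed identity equals this expression. For this I would use the three algebraic simplifications $\tfrac{s-r}{\sqrt s+\sqrt r}=\sqrt s-\sqrt r$, $\;1+\tfrac{\sqrt s-\sqrt r}{2\sqrt r}=\tfrac{\sqrt s+\sqrt r}{2\sqrt r}$, and $\tfrac{(\sqrt s+\sqrt r)^2}{s-r}=\tfrac{\sqrt s+\sqrt r}{\sqrt s-\sqrt r}$, so that the stated right-hand side becomes $(\sqrt s-\sqrt r)\ln\tfrac{\sqrt s+\sqrt r}{\sqrt s-\sqrt r}+2\sqrt r\ln\tfrac{\sqrt s+\sqrt r}{2\sqrt r}$; expanding the logarithms and regrouping the coefficients of $\ln(\sqrt s+\sqrt r)$ (namely $(\sqrt s-\sqrt r)+2\sqrt r=\sqrt s+\sqrt r$) reproduces exactly the expression above. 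Since every step is either a substitution, a standard primitive, or an algebraic identity, there is no genuine obstacle here; the only thing to be vigilant about is the bookkeeping of signs and the improper-integral limit at $v=\sqrt r$.
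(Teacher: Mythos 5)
Your proposal is correct and follows essentially the same route as the paper: the substitution $\theta=\sqrt u$, the factorization $\ln(s-\theta^2)=\ln(\sqrt s-\theta)+\ln(\sqrt s+\theta)$ (and likewise for $r$), and elementary integration of the logarithm, which the paper leaves as "a simple integration of $\ln(x)$." Your extra care with the improper endpoint at $v=\sqrt r$ and the final algebraic regrouping simply fills in the details the paper omits.
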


\begin{proof}
  We start by applying the change the variable $\theta = \sqrt u$ and obtain 
  \begin{align*}
    \displaystyle \frac{1}{2}\int_0^r \frac{\ln(s-u) - \ln(r-u)}{\sqrt u}\,du &= \int_0^{\sqrt r} \bigg(\ln\left(s-\theta^2 \right) - \ln\left(r -\theta^2\right)\bigg)\,d\theta \\
    &= \int_0^{\sqrt r} \bigg(\ln\left(\sqrt s-\theta\right)+\ln\left(\sqrt s+\theta\right)-\ln\left(\sqrt r-\theta\right)-\ln\left(\sqrt r+\theta\right) \bigg)\,d\theta.
  \end{align*}
  A simple integration of $\ln(x)$ permits us to conclude.
\end{proof}

The next lemma is a discrete version of Gronwall's lemma and was proved by Holte \cite{Holt}.
\begin{lem}\label{Gronw}
If $\left(y_n \right)_{n \in \N}$, $\left(f_n \right)_{n \in \N}$ and $\left(g_n \right)_{n \in \N}$ are non-negative sequences and $$ y_n \le f_n + \sum_{i= 0}^{n-1}g_i y_i \quad \text{ for } n \in \N $$ then $$ y_n \le f_n + \sum_{i= 0}^{n-1}f_i g_i \exp \left(\sum_{j=i+1}^{n-1}g_j \right) \quad \text{ for } n \in \N. $$
\end{lem}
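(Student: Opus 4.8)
The plan is to reduce the stated inequality to a linear first-order recursion for the partial sums $S_n := \sum_{i=0}^{n-1} g_i y_i$ (with the convention $S_0 = 0$), to solve that recursion explicitly, and then to bound the resulting finite product by an exponential.

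First I would observe that the hypothesis reads $y_n \le f_n + S_n$ for every $n \in \N$. Multiplying this by $g_n \ge 0$ and using that $S_{n+1} - S_n = g_n y_n$, one obtains the one-step estimate
$$S_{n+1} = S_n + g_n y_n \le S_n + g_n(f_n + S_n) = (1+g_n)\,S_n + g_n f_n, \qquad n \in \N.$$
Next I would unroll this recursion by induction on $n$, starting from $S_0 = 0$, to reach the closed form
$$S_n \le \sum_{i=0}^{n-1} f_i g_i \prod_{j=i+1}^{n-1}(1+g_j), \qquad n \in \N,$$
with the usual conventions that an empty product equals $1$ and an empty sum equals $0$; in particular the bound is the trivial equality $0 \le 0$ for $n = 0$. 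The inductive step is immediate: assuming the bound at rank $n$, multiply it by $(1+g_n)$, add $g_n f_n$, and use $(1+g_n)\prod_{j=i+1}^{n-1}(1+g_j) = \prod_{j=i+1}^{n}(1+g_j)$ together with $g_n f_n = f_n g_n \prod_{j=n+1}^{n}(1+g_j)$ to recognize precisely the bound at rank $n+1$.

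Finally I would invoke the elementary inequality $1 + x \le e^x$, valid for all real $x$, which, since all the $g_j$ are non-negative, yields $\prod_{j=i+1}^{n-1}(1+g_j) \le \exp\bigl(\sum_{j=i+1}^{n-1} g_j\bigr)$. Substituting this into the closed form for $S_n$ and then using $y_n \le f_n + S_n$ once more gives exactly the claimed conclusion
$$y_n \le f_n + \sum_{i=0}^{n-1} f_i g_i \exp\Bigl(\sum_{j=i+1}^{n-1} g_j\Bigr), \qquad n \in \N.$$
There is no genuinely hard step here; the only points requiring a little care are the bookkeeping of empty sums and empty products — in particular the base case $n=0$ and the initialization $S_0 = 0$ — and making sure that in the inductive step the index range of the product shifts correctly from $\{i+1,\dots,n-1\}$ to $\{i+1,\dots,n\}$. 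This is the argument of Holte \cite{Holt}, which I would reproduce in this self-contained form.
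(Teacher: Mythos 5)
Your proof is correct, and every step checks out: the recursion $S_{n+1}\le(1+g_n)S_n+g_nf_n$ follows from $g_n\ge 0$, the induction with the empty-sum/empty-product conventions is handled properly, and $1+x\le e^x$ converts the product bound into the stated exponential bound. Note that the paper itself gives no proof of this lemma but only cites Holte, so there is nothing to compare against; your argument is the standard one, and in fact your intermediate estimate $y_n\le f_n+\sum_{i=0}^{n-1}f_ig_i\prod_{j=i+1}^{n-1}(1+g_j)$ is slightly sharper than the exponential form actually stated in the lemma.
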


\end{appendix}



\begin{thebibliography}{plain}
  \bibitem{AmbFusPal} L. Ambrosio, N. Fusco and D. Pallara, \textit{Functions of Bounded Variation and Free Discontinuity Problems}. Oxford Mathematical Monographs, pp.119--120 ,2000.

  \bibitem{Bally} V. Bally and C. Rey, \textit{Approximation of Markov semigroups in total variation distance}. Electronic Journal of Probability, Vol.21(12), 2016.

  \bibitem{BHY} J. Bao, X. Huang and C. Yuan, \textit{Convergence Rate of Euler-Maruyama Scheme for SDEs with H\"older-Dini Continuous Drifts}. Journal of Theoretical Probability, Vol.32, pp.848--871, 2019.

  \bibitem{Brezis} H. Brezis, \textit{Analyse Fonctionelle: Th\'eorie et applications}. Collection Math\'ematiques appliqu\'ees pour la ma\^itrise - Masson, $2^{e}$ tirage, 1987. 

  \bibitem{DarGer} K. Dareiotis and M. Gerencs\'er, \textit{On the regularisation of the noise for the Euler-Maruyama scheme with irregular drift}. ArXiv preprint arXiv:1812.04583, 2020.

  \bibitem{Daun} T. Daun, \textit{On the randomized solution of initial value problems}. Journal of Complexity, Vol.27, pp.300--311, 2011.

  \bibitem{EtoMArt} P. Etore and M. Martinez, \textit{Exact simulation for solutions of one-dimensional Stochastic Differen- tial Equations with discontinuous drift}. ESAIM: Probability and Statistics, EDP Sciences, Vol.18, pp.686--702, 2014.

  \bibitem{NFrik} N. Frikha, \textit{On the weak approximation of a skew diffusion by an Euler-type scheme}. Bernoulli, Vol.24(3), pp.1653--1691, 2018.

  \bibitem{GotNeue} S. G\"ottlich, K. Lux and A. Neuenkirch, \textit{The Euler scheme for stochastic differential equations with discontinuous drift coefficient: A numerical study of the convergence rate}. Advances in Difference Equation, Vol.429, 2019.

  \bibitem{GyoKryl} I. Gy\"ongy and N. Krylov, \textit{Existence of strong solutions for Ito's stochastic equations via approximations}. Probability Theory and Related Fields, Vol.105, pp.143--158, 1996.

  \bibitem{Gyo} I. Gy\"ongy, \textit{A note on Euler's approximations}. Potential Analysis, Vol.8, pp.205--216 , 1998.

  \bibitem{GR} I. Gy\"ongy and M. Rasonyi, \textit{A note on Euler approximations for SDEs with H\"older continuous diffusion coefficients}. Stochastic Processes and their Applications, Vol.121, pp.2189--2200, 2011.

  \bibitem{HalKloe} N. Halidias and P.E. Kloeden, \textit{A note on the Euler-Maruyama scheme for stochastic differential equations with a discontinuous monotone drift coefficient}. BIT Numerical Mathematics, Vol.48, pp.51--59 ,2008.

  \bibitem{HeinMil} S. Heinrich and B. Milla, \textit{The randomized complexity of initial value problems}. Journal of Complexity, Vol.24, pp.77--88, 2008.

  \bibitem{Holt} J. M. Holte, \textit{Discrete Gronwall Lemma And Applications}. Mathematical Association of America North Central Section Meeting at UND, 24 October 2009.

  \bibitem{JenNeue} A. Jentzen and A. Neuenkirch. \textit{A random euler scheme for carath\'eodory differential equations}. Journal of computational and applied mathematics, Vol.224(1), pp.346--359, 2009.

  \bibitem{KaratShre} I. Karatzas and S. Shreve, \textit{Brownian Motion and Stochastic Calculus}. Graduate Texts in Mathematics, Vol.113(2), p.441, 1998.

  \bibitem{KohY} A. Kohatsu-Higa, A. Lejay and K. Yasuda, \textit{On Weak Approximation of Stochastic Differential Equations with Discontinuous Drift Coefficient}. Mathematical Economics, Kyoto, Japan, pp.94--106, 2012.

  \bibitem{Koh} A. Kohatsu-Higa, A. Lejay and K. Yasuda, \textit{Weak rate of convergence of the Euler-Maruyama scheme for stochastic differential equations with non-regular drift}. Journal of Computational and Applied Mathematics, Vol.326, pp.138--158, 2017.

  \bibitem{MenoKona} V. Konakov and S. Menozzi, \textit{Weak error for the Euler scheme approximation of diffusions with non-smooth coefficients}. Electronic Journal of Probability, Vol.22(46), pp.1--47, 2017.

  \bibitem{CruW} R. Kruse and Y. Wu, \textit{A randomized milstein method for stochastic differential equations with non-differentiable drift coefficients}. Discrete and Continuous Dynamical Systems, Vol.24(8), pp.3475--3502, 2019.

   \bibitem{LeoSzo1} G. Leobacher and M. Sz\"olgyenyi, \textit{A numerical method for SDEs with discontinuous drift}. BIT Numerical Mathematics, Vol.56, pp.151--162, 2016.

  \bibitem{LeoSzo2} G. Leobacher and M. Sz\"olgyenyi, \textit{A strong order $1/2$ method for multidimensional SDEs with discontinuous drift}. Annals of Applied Probability, Vol.27(4), pp.2382--2418, 2017.

  \bibitem{LeoSzo3} G. Leobacher and M. Sz\"olgyenyi, \textit{Convergence of the Euler-Maruyama method for multidimensional SDEs with discontinuous drift and degenerate diffusion coefficient}. Numerische Mathematik, Vol.138(1), pp.219--239, 2017.

  \bibitem{MikP} R. Mikulevicius and E. Platen, \textit{Rate of convergence of the Euler approximation for diffusion processes}. Mathematische Nachrichten, Vol.151, pp.233--239, 1991.

  \bibitem{Mik} R. Mikulevicius, \textit{On the rate of convergence of simple and jump-adapted weak Euler schemez for L\'evy driven SDEs}. Stochastic Processes and their Applications, Vol.122(7), pp.2730--2757, 2012.

  \bibitem{MikZ} R. Mikulevicius and C. Zhang, \textit{Weak Euler approximation for It\^o's diffusion and jump processes}. Stochastic Analysis and Applications, Vol.33(3), pp.549--571, 2015.

  \bibitem{MGYaro1} T. M\"uller-Gronbach and L.Yaroslavtseva, \textit{On the performance of the Euler-Maruyama scheme for SDEs with discontinuous drift coefficient}. Annales de l'Institut Henri Poincar\'e - Probabilit\'es et Statistiques, Vol.56(2), pp.1162--1178, 2020.

  \bibitem{MGYaro2} T. M\"uller-Gronbach and L.Yaroslavtseva, \textit{A strong order $3/4$ method for SDEs with discontinuous drift coefficient}. ArXiv preprint arXiv:1904.09178, 2019.

  \bibitem{NeunSzoSpr} A. Neuenkirch, M. Sz\"olgyenyi and L. Szpruch, \textit{An Adaptive Euler-Maruyama Scheme for Stochastic Differential Equations with Discontinuous Drift and its Convergence Analysis}.
  SIAM Journal on Numerical Analysis, Vol.57(1), pp.378--403, 2019.

  \bibitem{NeuSzo} A. Neuenkirch and M. Sz\"olgyenyi, \textit{The Euler-Maruyama Scheme for SDEs with Irregular Drift: Convergence Rates via Reduction to a Quadrature Problem}. ArXiv preprint arXiv:1904.07784, 2020.

  \bibitem{NgTag1} H. Ngo and D. Taguchi, \textit{Strong rate of convergence for the Euler-Maruyama approximation of stochastic differential equations with irregular coefficients}. Mathematics of Computation, Vol.85, pp.1793--1819, 2016.

  \bibitem{NgTag2} H. Ngo and D. Taguchi, \textit{On the Euler-Maruyama approximation for one-dimensional stochastic differential equations with irregular coefficients}. IMA Journal of Numerical Analysis, Vol.37(4), pp.1864--1883, 2017.

  \bibitem{Silv} B. W. Silverman, \textit{Density estimation for statistics and data analysis}. London: Chapman \& Hall/CRC. p.45, 1998.

  \bibitem{Stengle1} G. Stengle, \textit{Numerical Methods for Systems with Measurable Coefficients}. Appl. Math. Lett., Vol.3(4), pp.25--29, 1990.

  \bibitem{Stengle2} G. Stengle, \textit{Error analysis of a randomized numerical method}. Numerische Mathematik, Vol.70, pp.119--128, 1995.

  \bibitem{SuYuZ} Y. Suo, C. Yuan and S. Zhang, \textit{Weak convergence of Euler scheme for SDEs with singular drift}. ArXiv preprint arXiv:2005.04631, 2020.
  
  \bibitem{Yan} L. Yan, \textit{The Euler scheme with irregular coefficients}. The Annals of Probability, Vol.30(3), pp.1172--1194, 2002.

  \bibitem{Veret} A. J. Veretennikov, \textit{On strong solutions and explicit formulas for solutions of stochastic integral equations}. Mathematics of the USSR-Sbornik, Vol.39(3), pp.387--403, 1981.
 
\end{thebibliography}
\end{document}